\numberwithin{equation}{section}
\newtheorem{theorem}{Theorem}[section]
\newtheorem{lemma}{Lemma}[section]
\newtheorem{proposition}{Proposition}[section]
\newtheorem{remark}{Remark}[section]
\newtheorem{assumption}{Assumption}[section]
\newtheorem{definition}{Definition}[section]
\DeclareMathOperator*{\supp}{supp}
\title[Ergodic MFGs of Singular Control with Regime-Switching]{Ergodic Mean-Field Games of Singular Control with Regime-Switching (Extended Version)}
\author[Dianetti]{Jodi Dianetti}
\author[Ferrari]{Giorgio Ferrari}
\author[Tzouanas]{Ioannis Tzouanas}
\date{\today}
\keywords{}
\address{J.~Dianetti: Department of Economics and Finance, University of Rome Tor Vergata, Rome, Italy}
\email{\href{mailto:jodi.dianetti@uniroma2.it}{jodi.dianetti@uniroma2.it}}
\address{G.~Ferrari: Center for Mathematical Economics (IMW), Bielefeld University, Universit\"atsstrasse 25, 33615, Bielefeld, Germany}
\email{\href{mailto:giorgio.ferrari@uni-bielefeld.de}{giorgio.ferrari@uni-bielefeld.de}}
\address{I.~Tzouanas: Center for Mathematical Economics (IMW), Bielefeld University, Universit\"atsstrasse 25, 33615, Bielefeld, Germany}
\email{\href{mailto:ioannis.tzouanas@uni-bielefeld.de}{ioannis.tzouanas@uni-bielefeld.de}}
\begin{document}
\setcitestyle{numbers}
\maketitle

\begin{abstract}
This paper studies a class of ergodic mean-field games of singular
stochastic control with regime-switching. The representative agent adjusts the
dynamics of a Markov-modulated It\^o-diffusion via a two-sided singular
stochastic control and faces a long-time-average expected profit criterion. The
mean-field interaction is of scalar type and it is given through the stationary
distribution of the population. Via a constructive approach, we prove the
existence and uniqueness of the stationary mean-field equilibrium. Furthermore,
we show that this realizes a symmetric $\varepsilon_N$-Nash equilibrium for a
suitable ergodic $N$-player game with singular controls. The proof hinges on
the characterization of the optimal solution to the representative player's
ergodic singular stochastic control problem with regime switching in terms of an auxiliary Dynkin game, which is of
independent interest and appears here for the first time. 
\end{abstract}
    \subsection*{Keywords:} stationary mean-field games; singular control; regime-switching; ergodic criterion; $\varepsilon$-Nash equilibrium.
    \subsection*{MSC subject classification:} 49L20, 91A15, 91A16, 60G40, 35R35, 93C30.

\section{Introduction}
Mean-field games (MFGs in short) have been introduced independently in 2006 by Larsy-Lions \cite{Lions} and Caines et al.\ \cite{Caines2006LargePS} as limit models for symmetric $N$-player differential games, where the interaction is through the empirical distribution of the states (and possibly of the actions) of the players. In MFGs, a representative agent determines her best reply to a given flow of probability measures -- e.g., representing the distribution of the states of the indistinguishable rivals -- so that the counterpart to the Nash equilibrium concept arising in $N$-player games takes now the form of a consistency condition: The law of the optimally controlled state process of the representative agent must agree with the given flow of probability measures. Since their introduction, because of their tractability, their relation to the theory of propagation of chaos and of forward-backward systems, and their ability to reproduce $\varepsilon_N$-Nash equilibria for suitably related symmetric $N$-player games, MFGs have attracted large attention in the mathematical and applied literature. We refer to the two-volume book by Carmona and Delarue \cite{carmona2018probabilistic} for a comprehensive presentation of results, approaches, and techniques, as well as to the paper by Carmona \cite{carmona2020applications} for a review of applications of MFGs in Economics, Finance, and Engineering.

In stationary MFGs, the representative player interacts with the long-run distribution of the population. Such a concept has a long tradition in economic theory: Stationary equilibria appeared already in the 1980s in the context of games with a continuum of players (see \cite{Hopeynan} and \cite{JOVANOVIC198877}), and also play an important role in the analysis of competitive market models with heterogeneous agents (see, e.g., \cite{PDE-Macro} and \cite{Luttmer}, amongst many others). Closely connected is also the concept of stationary oblivious equilibria, introduced by Adlakha et al.\ in  \cite{ADLAKHA2015269}.
Within the mathematical literature, stationary MFGs have been approached both via analytic and probabilistic methods. Among those papers adopting a partial differential equations (PDE) approach, we refer to the works of Bardi and Feleqi \cite{bardi2016nonlinear} for the study of the forward-backward system arising in stationary MFGs with regular controls, Gomes et al.\ \cite{GOMES201449} for extended stationary MFGs, Cardaliaguet and Porretta \cite{cardaliaguetprotera} for the study of the long-term behavior of the master equation arising in MFG theory, and to Bertucci \cite{BERTUCCI2018165} for the study of stationary mean-field optimal stopping games. On the other hand, a probabilistic approach is followed in a series of recent contributions dealing with stationary MFGs with singular and impulsive controls, see A\"id et al.\ \cite{Ferrari-Basei}, Cao and Guo \cite{CAO2022995}, Cao et al.\ \cite{CaoDianettiFerrari}, and Christensen et al.\ \cite{Christensen}.

\subsection{Our results} In this paper, we study a class of stationary MFGs where the underlying state process is a general singularly controlled one-dimensional diffusion whose coefficients are modulated by a continuous-time Markov chain with $d\geq 2$ states. More precisely, the representative agent optimally controls a Markov-modulated real-valued It\^o-diffusion through a two-sided singular control in order to maximize an ergodic reward functional. This is given by the long-time-average of the time-integral of a running profit function, net of the proportional costs of actions. The mean-field interaction is of scalar type and comes through a real-valued parameter denoted by $\theta$, which, at equilibrium, has to identify with a suitable generalized moment of the stationary distribution of the optimally controlled state process. From the economic point of view, $\theta$ can be thought of as a stationary price index arising from the aggregate productivity through an isoelastic demand function \`a la Spence-Dixit-Stiglitz (see pp.\ 7-8 in \cite{PDE-Macro}), or of as a stationary demand due to aggregate advertising (see pp.\ 595-596 in \cite{LightWeintraub}). We refer to Remark \ref{remark: benchmark example} below for details.

Under suitable assumptions on the problem's data, by employing mainly probabilistic means, we prove the existence and uniqueness of a stationary MFG equilibrium for the considered game. Furthermore, we show that this realizes an $\varepsilon_N$-Nash equilibrium for a related symmetric ergodic $N$-player game with singular controls.

Our first contribution consists in studying the representative player's optimal control problem and thus in providing, for the first time in the literature, the complete solution to a two-sided ergodic singular stochastic control problem with regime-switching (cf.\ Proposition \ref{Prop  3.5} below).\ This is accomplished through the study of an auxiliary optimal stopping game, which is then tackled through probabilistic arguments similarly to Ferrari and Rodosthenous in \cite{doi:10.1137/19M1245049} (where, however, the considered Dynkin game was related to a discounted singular stochastic control problem). To the best of our knowledge, connections between ergodic singular stochastic control problems and optimal stopping are still not well understood, contrary to the discounted or finite horizon case. Two exceptions are Karatzas \cite{karatzas1984} and Cao et al.\ \cite{CaoDianettiFerrari} where one-dimensional ergodic singular stochastic control problems are addressed via optimal stopping (in \cite{karatzas1984} the underlying process is a Brownian motion). However, when the dimension of the state process is larger than one, as in our case, no result is available in the literature. Through a  verification argument (see Propositions \ref{eq:Prop 3.4} and \ref{Prop  3.5} below), we show that, for a given and fixed mean-field parameter $\theta$, the optimal control is of barrier-type. That is, the optimal control uniquely solves a Skorokhod reflection problem (see e.g., Burdzy et al.\ \cite{burdzy2009skorokhod}) at endogenously determined barriers (the free boundaries of the underlying optimal stopping game), which depend on the underlying Markov chain and on the given and fixed mean-field parameter $\theta$. As a byproduct, we also show that the optimal upwards and downwards reflection policies satisfy a couple of functional equations resembling those in Theorem 2 of \cite{karatzas1984}.

The next step deals with the construction of the MFG equilibrium and with the proof of its uniqueness. To that end, we first show that the joint process constituted by the optimally controlled Markov-modulated diffusion process and the Markov chain admits a stationary distribution (cf.\ Proposition \ref{Prop 4.1} below). As a matter of fact, we prove that its cumulative distribution function is the unique classical solution to a weakly-coupled system of ordinary differential equations (cf.\ Equation \ref{eq:(4.1)} below). This generalizes the result for a Markov-modulated Brownian motion obtained by D'Auria and Kella in Theorem 1 of \cite{DAURIA20121566}. Clearly, the stationary distribution and its cumulative distribution depend on the fixed mean-field parameter $\theta$, since the optimally controlled state does. In order to proceed with the equilibrium analysis, we thus study the stability of the stationary distribution with respect to $\theta$ and actually prove its continuity with respect to such a parameter (cf.\ Theorem \ref{Thm 4.1}). Further exploiting the connection to the aforementioned Dynkin game of optimal stopping, we are then able to determine an invariant compact set where any equilibrium value of $\theta$ (if one exists) should lie. Combining those continuity and compactness results, an application of the Schauder-Tychonof fixed point theorem allows us to prove that there exists a unique stationary equilibrium (cf.\ Theorem \ref{Thm 4.1}).

The analysis of the considered stationary MFG is finally justified by the fact that its unique stationary mean-field equilibrium is able to realize an $\varepsilon_N$-Nash equilibrium for an $N$-player symmetric game with singular controls in which each player faces an ergodic net profit functional. It is worth noticing that in the $N$-player game the interaction comes through a suitable time-dependent average of the players' states (see Eqs.\ \ref{eq:functJn} and \ref{eq:thetaN} below), and it is therefore given in terms of the empirical distribution of players' states at the current time.

\subsection{Related literature} Ergodic singular stochastic control problems for one-dimensional diffusions have been treated in general settings, including state-dependent costs of actions, and with different applications, in \cite{AlvarezHenning}, \cite{LokkaZervosI}, \cite{Menaldi} and \cite{ZervosErgodic}, \cite{Kunwai}, among others. However, in all those papers, no regime switching is included.

Our paper is placed within the recent bunch of literature dealing with MFGs with singular controls by following a probabilistic approach; see A\"id et al.\ \cite{Ferrari-Basei}, Cao and Guo \cite{CAO2022995}, Cao et al.\ \cite{CaoDianettiFerrari}, Campi et al.\ \cite{Campi_et.al}, Cohen and Sun \cite{cohen2024existenceoptimalstationarysingular}, Dianetti et al.\ \cite{Unifying_Sub_MFG}, Fu \cite{Fu}, Fu and Horst \cite{Fu-Horst}, and Guo and Xu \cite{GuoXu}. Amongst those, the works that most relate to ours are those by A\"id et al.\ \cite{Ferrari-Basei} and by Cao et al.\ \cite{CaoDianettiFerrari}. Cao et al.\ consider in \cite{CaoDianettiFerrari} ergodic MFGs involving a one-dimensional singularly controlled It\^o-diffusion. However, differently to us, the control in \cite{CaoDianettiFerrari} can be exerted only upwards and no regime-switching process is considered therein. In our work, similarly to \cite{Ferrari-Basei}, we consider a stationary MFG involving a singularly controlled one-dimensional diffusion whose coefficients are modulated by a continuous-time Markov chain. However, differently to \cite{Ferrari-Basei}, here the control is two-sided, rather than only increasing, the performance criterion is of ergodic type, rather than of discounted type, the dynamics of the underlying state process are general, rather than geometric, and the Markov chain has $d\geq 2$ states, rather than only two regimes. 

We also clearly relate to those works dealing with MFGs involving regime-switching regular control models. Wang and Zhang \cite{WangFeng} consider social optima of mean-field linear-quadratic-Gaussian control models with Markov jump parameters, while distributed games for large-population multi-agent systems with random time-varying parameters are investigated in \cite{WangZhang}. Bensoussan et al.\ \cite{bensoussan2020mean} focus on MFGs of risk-sensitive type with jump-diffusions and regime-switching. Furthermore, due to the application in networks with switching mechanism, mean-field control problems with regime-switching became recently of particular interest: See, among others, Bayraktar et al.\ \cite{bayraktar2023mean}, Zhang et al.\ \cite{ZhangSun} and Nguyen et al.\ \cite{NguyenYin}.

\subsection{Organization of the paper}
The rest of the paper is organized as follows.\ In Section \ref{Section 2}, we introduce the probabilistic setting and the MFG under study. Next, in Section \ref{Chapter 3}, for a given and fixed mean-field parameter, we solve the ergodic stochastic control problem faced by the representative player. In Section \ref{Mean field analysis} we then prove the existence and uniqueness of the mean-field equilibrium, while in Section \ref{Approximation} we provide the approximation result for a related $N$-player symmetric game. Finally, technical proofs are collected in Appendices \ref{Appendix A}, \ref{Appendix B} and \ref{Appendix C}.

\section{Probabilistic setting}
\label{Section 2}
Let $(\Omega,\mathcal{F}, \mathbb{F}:=\{\mathcal{F}_{t}\}_{t\geq 0}, \mathbb{P})$ be a filtered probability space which satisfies the usual conditions, on which it is defined a one-dimensional $\mathbb{F}$-Brownian motion $\{W_{t}\}_{t\geq 0}$ and an independent irreducible $\mathbb{F}$-adapted continuous-time Markov chain $\{Y_{t}\}_{t\geq 0}$. The Markov chain $Y$ has state space $\mathbb{Y}:=\{1,...,d\}$ and transition matrix $\mathbb{Q}:=\{q_{ij}\}_{1\leq i,j\leq d}$. The transition rates are such that $\kappa_{i}:=-q_{ii}>0$ and the condition $\sum_{j\in\mathbb{Y}}q_{ij}=0$ holds for every $i\in\mathbb{Y}$. Accordingly, 
setting
$P_{ij}(t):=\mathbb{P}(Y_t=j|Y_0=i)$, we have $P'_{ij}(t)=\sum_{k\in \mathbb{Y}}q_{i\ell}P_{\ell j}(t)$, for $i,j\in \mathbb{Y}$. It also follows that $Y$ admits a stationary distribution, whose $i$-th component is given by
\begin{equation}
    p(i):=\frac{\kappa_{i}}{\sum_{j=1}^{d}\kappa_{j}}.
\end{equation}

For given Borel-measurable functions $b:\mathbb{R}\times \mathbb{Y}\to \mathbb{R}$, $\sigma:\mathbb{R}\times \mathbb{Y}\to (0,\infty)$, we introduce the process $X^{0}$ with state space $\mathcal{I}:=(\underline{x},\overline{x})$ and dynamics
\begin{equation}
\label{eq:SDE-X0}
dX^{0}_{t}=b(X^{0}_{t},Y_{t})dt+\sigma(X^{0}_{t},Y_{t})dW_{t},\quad (X^{0}_{0},Y_{0})=(x,i)\in\mathcal{I}\times \mathbb{Y}.
\end{equation}
Here, $-\infty \leq \underline{x} < \overline{x} \leq + \infty$ are the boundary points of the state space of $X^0$. Let then
\begin{equation*}
    \mathcal{A}:=\{\{\xi_{t}\}_{t\geq 0},\;\mathbb{F}\text{-adapted, with bounded-variation,  left-continuous, }\xi_{0}=0,\text{ a.s.}\}
\end{equation*}
and notice that any $\xi\in\mathcal{A}$ admits the Jordan decomposition $\xi=\xi^{+}-\xi^{-}$, for $\xi^{\pm}$ nondecreasing. 
Also, let $\{ |\xi|_{t} \}_{t\geq0}:=\xi^{+}+\xi^{-}$ denote the variation of $\xi\in\mathcal{A}$. Then, for given $\xi\in\mathcal{A}$ and $x\in \mathcal{I}$, we introduce the process $X^{\xi}$ with dynamics
\begin{equation}
\label{eq:(2.2)}
dX^{\xi}_{t}=b(X^{\xi}_{t},Y_{t})dt+\sigma(X^{\xi}_{t},Y_{t})dW_{t}+d\xi^{+}_{t}-d\xi^{-}_{t},\quad (X^{\xi}_{0},Y_{0})=(x,i)\in\mathcal{I}\times \mathbb{Y}.
\end{equation}

The following assumption in particular ensures that there exists a unique strong solution to (\ref{eq:(2.2)}), for every $\xi\in\mathcal{A}$ and $(x,i)\in\mathcal{I}\times\mathbb{Y}$ (hence, in particular, for $\xi\equiv0$; see Theorem 7 Chapter V in \cite{protter2005stochastic}). In the following, we shall denote such strong solutions by $(X^{x,\xi},Y^{i})$ and $(X^{x,0},Y^{i})$, when needed.

\begin{assumption}
\label{eq:Ass 2.1}
The following hold:
        \begin{enumerate}
            \item \label{eq:(2.1-1)} The functions $b(\cdot,i)$ and $\sigma(\cdot,i)$ are twice continuously differentiable, for every $i\in\mathbb{Y}$.
            \item \label{eq:(2.1-2)} There exists $C>0$, such that
                        $|b(x,i)|+|\sigma(x,i)|\leq C(1+|x|)$, for any $(x,i)\in\mathcal{I}\times \mathbb{Y}$.
            \item \label{eq:(2.1-3)} There exists $c>0$, such that
               $ b_{x}(x,i)\leq -c$, for any $(x,i)\in\mathcal{I}\times \mathbb{Y}$.
            \item \label{eq:(2.1-4)} For any $(x,i)\in \mathcal{I}\times \mathbb{Y}$,
            $
            \sigma(x,i)>0.
            $
        \end{enumerate}
\end{assumption}
Conditions (\ref{eq:(2.1-1)}) and (\ref{eq:(2.1-2)}) in Assumption \ref{eq:Ass 2.1} imply that $(X^{0},Y)$ is regular, meaning that there exists a sequence of stopping times $\{\beta_{n}\}_{n\geq 0}$, with $\beta_{n}:=\inf\{t\geq 0: |X^{0}_{t}|=n\}$, such that $\beta_{\infty}:=\lim_{n\to \infty}\beta_{n}=\infty,\; \mathbb{P}$-a.s.; for further details see Section 2.3 in \cite{yin2009hybrid}.

For our subsequent analysis, we introduce the $\mathbb{F}$-adapted process $\{\widehat{X}_{t}\}_{t\geq 0}$, which evolves as
\begin{equation}
    \label{eq:(2.3)}
    d\widehat{X}_{t}=(b(\widehat{X}_{t},Y_{t})+\sigma\sigma_{x}(\widehat{X}_{t},Y_{t}))dt+\sigma(\widehat{X}_{t},Y_{t})d\widehat{W}_{t},\quad (\widehat{X}_{0},Y_{0})=(x,i)\in\mathcal{I}\times \mathbb{Y},
\end{equation}
for an $\mathbb{F}$-adapted Brownian motion $\widehat{W}$. Notice that Equation (\ref{eq:(2.3)}) also admits a unique strong solution $(\widehat{X}^{x},Y^{i})$ which is regular, due to Assumption \ref{eq:Ass 2.1}.
Process $\widehat{X}$ will play an important role in Section \ref{Section 3.1} below since the couple $(\widehat{X},Y)$ will be the state process of the auxiliary Dynkin game that we will introduce to solve the ergodic mean-field game under consideration.

For $g:\mathbb{R}\times \mathbb{Y}\to \mathbb{R}$ such that $g(\cdot,i)\in C^{2}(\mathbb{R}),\text{ for any }i\in\mathbb{Y}$, the infinitesimal generator of the  uncontrolled process $(X^{0},Y)$ is denoted by $\mathcal{L}_{(X,Y)}$ and it is such that
\begin{equation}\label{eq:(2.4)}
    \big(\mathcal{L}_{(X,Y)}g\big)(x,i)=\frac{1}{2}\sigma^{2}(x,i)g_{xx}(x,i)+b(x,i)g_{x}(x,i)+\sum_{j\neq i}q_{ij}(g(x,j)-g(x,i)),
\end{equation}
while the infinitesimal generator $\mathcal{L}_{(\widehat{X},Y)}$ for the process $(\widehat{X},Y)$ is such that
\begin{equation}
    \label{eq:(2.5)}
    \big(\mathcal{L}_{(\widehat{X},Y)}g\big)(x,i)=\frac{1}{2}\sigma^{2}(x,i)g_{xx}(x,i)+(b(x,i)+\sigma\sigma_{x}(x,i))g_{x}(x,i)+\sum_{j\neq i}q_{ij}(g(x,j)-g(x,i)).
\end{equation}

\begin{remark}
    Our setting accommodates (uncontrolled) affine diffusion processes where $X^{0}$ is either a geometric Brownian motion with $b(x,i)=-\delta_{i}x$ and $\sigma(x,i)=\sigma_{i}x$ or a mean-reverting process with $b(x,i)=\rho_{i}(\kappa_{i}-x)$ and $\sigma(x,i)=\sigma_{i}x$, for constants $\delta_{i},\rho_{i},\kappa_{i},\sigma_{i}>0$  for any $i\in\mathbb{Y}$. 
\end{remark}
In the rest of the paper, we adopt the following notation: $\mathbb{P}_{(x,i)}[\,\cdot\,]:=\mathbb{P}[\,\cdot\,|X_{0}^{\xi}=x,Y_{0}=i]$ and  $\mathbb{E}_{(x,i)}[\,\cdot\,]:=\mathbb{E}^{\mathbb{P}}[\,\cdot\, |X^{\xi}_{0}=x,Y_{0}=i]$ for the hybrid-diffusion process $(X^{\xi},Y)$, and $\widehat{\mathbb{P}}_{(x,i)}[\,\cdot\,]:=\widehat{\mathbb{P}}[\,\cdot\,|\widehat{X}_{0}=x,Y_{0}=i]$ and $\widehat{\mathbb{E}}_{(x,i)}[\,\cdot\,]:=\mathbb{E}^{\widehat{\mathbb{P}}}[\,\cdot\, |\widehat{X}_{0}=x,Y_{0}=i]$ for $(\widehat{X},Y)$. We also set $\mathbb{P}_{i}[\,\cdot\,]:=\mathbb{P}[\,\cdot\, |Y_{0}=i]$ and denote by $\mathbb{E}_{i}$ the corresponding expectation.

\subsection{The ergodic mean-field game}
\label{sec:ErgodicMFG}

Within the previous probabilistic setting, we now introduce the ergodic mean-field game (ergodic MFG for short) which will be the main object of our study. 

We restrict to those $\xi\in\mathcal{A}$ belonging to
\begin{equation}
    \label{admissible controls} \mathcal{A}_{e}:=\bigg\{\xi\in\mathcal{A}:\, X^{\xi}_{t} \in \mathcal{I} \,\,\forall t \geq 0\,\,\text{a.s.},\,\mathbb{E}\big[|\xi|_{T}\big]<\infty\,\,\forall T<\infty,\,\limsup_{T\uparrow\infty}\frac{1}{T}\mathbb{E}\big[|X^{\xi}_{T}|\big]=0\bigg\},
\end{equation}
and, for $(x,i)\in\mathcal{I}\times \mathbb{Y}=:\mathcal{O}$, $\xi\in\mathcal{A}_{e}$ and $\theta\in\mathbb{R}_{+}$, we introduce the ergodic profit functional
\begin{equation}
    \label{eq:(2.6)}
    J(x,i;\xi,\theta):=\limsup_{T\uparrow \infty}\frac{1}{T}\mathbb{E}_{(x,i)}\bigg[ \int_{0}^{T}\pi(X^{\xi}_{t},\theta)dt-k_{1}\xi^{+}_{T}+k_{2}\xi^{-}_{T} \bigg],
\end{equation}
where $0<k_{2}<k_{1}$. 
In (\ref{eq:(2.6)}), $\pi$ is the instantaneous profit function satisfying Assumption \ref{eq:Ass 2.2} below, and $\theta$ is (for the moment) a fixed nonnegative number, which drives the mean-field interaction (cf.\ Definition \ref{eq: Def 1} below).
Notice that $J$ is well defined for $\xi \in \mathcal{A}_e$, although potentially infinite.

We are now ready to provide the notion of stationary mean-field equilibrium. 
To that end, consider functions $F:\mathbb{R}_{+}\to\mathbb{R}_{+}$ and $f:\mathcal{I}\to\mathbb{R}_{+}$, 
and introduce the following notion of equilibrium.
\begin{definition}[Ergodic MFG Equilibrium]
        \label{eq: Def 1}
        For an initial condition $(x,i)\in \mathcal{O}$, a couple $(\xi^{*}(\theta^{*}),\theta^{*})\in\mathcal{A}_{e}\times\mathbb{R}_{+}$ is said to be an \textbf{equilibrium of the ergodic MFG} for the initial condition $(x,i)$ if
        \begin{enumerate}
            \item \label{eq:Def 1-1} $J(x,i;\xi^{*}(\theta^{*}),\theta^{*})\geq J(x,i;\xi,\theta^{*}),\text{ for any }\xi\in\mathcal{A}_{e}$.
            \item \label{eq:Def 1-2} The optimally controlled state process $(X^{\xi^{*}(\theta^{*})},Y)$ admits a limiting stationary distribution $\mu^{\theta^{*}}$ and $\theta^{*}=F\big(\sum_{i=1}^{d} \int_{\mathcal{I}}f(x)\mu^{\theta^{*}}(dx,i)\big)$.
        \end{enumerate}
\end{definition}

In the sequel, our solution plan will be as following:
\begin{enumerate}
    \item For a fixed mean-field parameter $\theta\in\mathbb{R}_{+}$, we solve the ergodic control problem aiming at maximizing (\ref{eq:(2.6)}) over $\mathcal{A}_{e}$.
    \item We determine the stationary distribution of the optimally controlled state process, we impose the consistency condition (\ref{eq:Def 1-2}) in Definition \ref{eq: Def 1}, and we prove the existence and uniqueness of the mean-field parameter $\theta^{*}$ via a fixed-point argument.
    \item We finally show that the constructed ergodic MFG equilibrium realizes an $\epsilon_{N}$-Nash equilibrium for a suitable ergodic $N$-player game of singular control with regime-switching.
\end{enumerate}
\subsection{Assumptions and examples}
The instantaneous profit function $\pi$ fulfills the following conditions.
\begin{assumption}
    \label{eq:Ass 2.2}
    The function $\pi:\mathbb{R}\times \mathbb{R}_{+}\mapsto \mathbb{R}_{+}$ is such that:
    \begin{enumerate}
    	\item \label{eq:2.2-1} $\pi(\cdot,\theta)\in C^{2}(\mathcal{I})$, for any $\theta\in\mathbb{R}_{+}$; 
        \item \label{eq:2.2-2} $\pi(\cdot,\theta)$ is non-decreasing and concave, for any $\theta\in\mathbb{R}_{+}$;
        \item \label{eq:2.2-3}$\pi_{x\theta}$ is continuous and it is such that $\pi_{x\theta}(x,\theta)<0$, for any $(x,\theta)\in\mathcal{I}\times \mathbb{R}_{+}$;
        \item \label{eq:2.2-4} for every 
        $(i,\theta)\in\mathbb{Y}\times \mathbb{R}_{+}$,
        \begin{equation*}
            \lim_{x\downarrow \underline{x}}\widehat{\mathbb{E}}_{(x,i)}\bigg[\int_{0}^{\infty}e^{\int_{0}^{t}b_{x}(\widehat{X}_{s},Y_{s})ds}\pi_{x}(\widehat{X}_{t},\theta)dt\bigg]=\infty,
        \end{equation*}
        and,
        \begin{equation*}
            \lim_{x\uparrow \overline{x}}\widehat{\mathbb{E}}_{(x,i)}\bigg[\int_{0}^{\infty}e^{\int_{0}^{t}b_{x}(\widehat{X}_{s},Y_{s})ds}\pi_{x}(\widehat{X}_{t},\theta)dt\bigg]=0;
        \end{equation*}
        \item \label{eq:2.2-5} for every 
        $(x,i,\theta)\in\mathcal{O}\times \mathbb{R}_{+}$,
        \begin{equation*}
            \widehat{\mathbb{E}}_{(x,i)}\bigg[ \int_{0}^{\infty}e^{-ct}|\pi_{x}(\widehat{X}_{t},\theta)|dt\bigg]<\infty,
        \end{equation*}
         and, for some $\epsilon_{0}:=\epsilon_{0}(x)\in (0,1)$,
        $$
        \begin{aligned}
            &\widehat{\mathbb{E}}\bigg[ \int_{0}^{\infty}e^{-ct}\sup_{x'\in \mathcal I _x^{\epsilon_{0}} }\Big(|\pi_{xx}(\widehat{X}^{x'}_{t},\theta)|\partial_{x}\widehat{X}^{x}_{s}\big|_{x=x'}\Big)dt\bigg]+ \\
            &+\widehat{\mathbb{E}}\bigg[ \int_{0}^{\infty}e^{-ct}|\pi_{x}(\widehat{X}^{x}_{t},\theta)
    |\sup_{x'\in\mathcal I _x^{\epsilon_0} }\bigg(\int_{0}^{t}|b_{xx}(\widehat{X}^{x'}_{s},Y^{i}_{s})|\partial_{x}\widehat{X}^{x}_{s}\big|_{x=x'}ds\bigg)dt\bigg]<\infty,
        \end{aligned}
        $$
        where $ \mathcal I _x^{\epsilon_{0}}:=(x,x+\varepsilon_0)$, 
        $c>0$ is the same constant as in Assumption \ref{eq:Ass 2.1}-(\ref{eq:(2.1-3)})
         and $\partial_{x}\widehat{X}^{x}_{t}:=\exp{\big( \int_{0}^{t}\big(b_{x}+\partial_{x}(\sigma\sigma_{x})-\frac{1}{2}\sigma^{2}_{x}\big)(\widehat{X}_{s}^{x},Y_{s})ds+\int_{0}^{t}\sigma_{x}(\widehat{X}^{x}_{s},Y_{s})d\widehat{W}_{s} \big)}$;
        \item \label{assumption bxx} for every $(x,i)\in\mathcal{O}$, and for some $\epsilon_0:=\epsilon_0(x) \in (0,1)$, it holds that
            \begin{equation}
                \label{int condition for stopping}
                \widehat{\mathbb{E}}\bigg[ \int_{0}^{\infty}e^{-ct}\, \sup_{x'\in (x,x+\epsilon_0)}\Big(|b_{xx}(\widehat{X}^{x'}_{t},Y^{i}_{t})\big| \partial_{x}\widehat{X}^{x}_{t}\big|_{x=x'}\Big) dt \bigg]<\infty.
            \end{equation}
    \end{enumerate}
\end{assumption}

In the sequel, we enforce the following requirements on the functions $f,F$.
\begin{assumption}
    \label{eq:Ass 2.3}
    $F:\mathbb{R}_{+}\to\mathbb{R}_{+}$, $f:\mathcal{I}\to\mathbb{R}_{+}$ are such that:
    \begin{enumerate}
        \item \label{eq:Ass 2.3-1} $F$ and $f$ are strictly increasing continuously differentiable functions;
        \item  for $\beta\in(0,1)$, there exists $C>0$ such that:
        \begin{enumerate}
            \item \label{eq:Ass 2.3-2-a}
            $
                |f(x)|\leq C(1+|x|^{\beta}),\quad |F(x)|\leq C(1+|x|^{\frac{1}{\beta}}),
            $
            \item \label{eq:Ass 2.3-2-b}
            $
                \big| F(x)-F(y) \big|\leq C(1+|x|+|y|)^{\frac{1}{\beta}-1}|x-y|;
            $
        \end{enumerate}
        \item \label{eq:Ass 2.3-3} $\lim_{x\uparrow\infty}F(x)=\lim_{x\uparrow\overline{x}}f(x)=\infty$.
    \end{enumerate}
\end{assumption}
\begin{remark}
    \label{remark: benchmark example} As a benchmark example we take a profit function
    \begin{equation}
        \pi(x,\theta)=x^{\beta}(\theta^{-(1+\beta)}+\kappa_*),\quad F(x)=x^{1/\beta},\quad f(x)=x^{\beta}, \quad \kappa_*>0, \ \beta\in (0,1),
    \end{equation}
    and dynamics $b(x,i)=-\delta_{i}x$ and $\sigma(x,i)=\sigma_{i}x$, for constants $\delta_{i}, \sigma_{i}>0$  for any $i\in\mathbb{Y}$.
    All the assumptions are easily verified to hold.
    Such an example relates to a MFG of partially reversible investment. In this regard, $X$ represents the productivity of a representative company. Such productivity can be instantaneously increased via the cumulative investment process $\xi^+$ and decreased via the cumulative disinvestment process $\xi^-$. The function $\pi$ is the instantaneous profit accrued from the production of a final good, and this depend on the company's productivity level, as well as on the equilibrium stationary price $\theta$ arising from a competing market where symmetric producing companies face isoelastic demand functions \`a la Spence-Dixit-Stiglitz (cf.\ Section 3 in \cite{PDE-Macro}, as well as Section 2.1 in \cite{Calvia-etal} for a microeconomic foundation in a setting with regular controls). 
\end{remark}

\section{The Ergodic optimal control problem}
\label{Chapter 3}

Recalling (\ref{eq:(2.6)}), in this section we fix $\theta\in\mathbb{R}_{+}$ and solve the ergodic control problem. In particular, we want to find 
\begin{equation}
    \label{eq:(3.1)}
    \bar{\lambda}(\theta):=\sup_{\xi\in\mathcal{A}_{e}}J(x,i;\xi,\theta).
\end{equation}
To the best of our knowledge, this is the first paper that solves a singular stochastic control problem with regime switching and ergodic performance criterion. In order to solve (\ref{eq:(3.1)}), we let $V:\mathcal{O}\times \mathbb{R}_{+}\to \mathbb{R}$ and $\lambda:\mathbb{R}_{+}\times \mathbb{Y}\to \mathbb{R}$ to be determined such that $V(\cdot,i;\theta)\in C^{2}(\mathcal{I}),\text{ for any } (i,\theta)\in\mathbb{Y}\times \mathbb{R}_{+}$, and the pair $(V,\lambda)$ solves the variational inequality
\begin{equation}
\label{eq:(3.2)}
    \max\big\{ \mathcal{L}_{(X,Y)}V(x,i;\theta)+\pi(x,\theta)-\lambda(\theta,i),V_{x}(x,i;\theta)-k_{1},k_{2}-V_{x}(x,i;\theta) \big\}=0.
\end{equation}
It will be shown in Proposition \ref{eq:Prop 3.4} below that a solution $(V,\lambda)$ to (\ref{eq:(3.2)}) allows to obtain $\bar{\lambda}$ in the sense that
\begin{equation}
\label{eq:(3.3)}
	\bar{\lambda}(\theta)=\sum_{i=1}^{d}p(i)\lambda(\theta,i),
\end{equation} 
where $(p(i))_{i\in \mathbb{Y}}$ denotes the stationary distribution of the Markov chain $Y$.
The following additional assumption on $\pi$ holds throughout the rest of this paper.
\begin{assumption}
    \label{eq:Ass 3.1}
    For  $x_{-}(\theta):=(x_{-}(i,\theta))_{i \in \mathbb Y}$ and $x_{+}(\theta):=(x_{+}(i,\theta))_{i \in \mathbb Y}$, with $x_{-}(i,\theta)<x_{+}(i,\theta),\,i\in\mathbb{Y}$, it holds:
        \begin{equation*}
            \pi_{x}(x,\theta)+k_{1}b_{x}(x,i)\begin{cases}
                                                   >0,\quad x\in (\underline{x}, x_{-}(i,\theta)), \\
                                                   =0,\quad x=x_{-}(i,\theta), \\
                                                   <0,\quad x\in(x_{-}(i,\theta),\overline{x}),
                                                \end{cases}
        \end{equation*}
        \begin{equation*}
            \pi_{x}(x,\theta)+k_{2}b_{x}(x,i)\begin{cases}
                                                   >0,\quad x\in(\underline{x}, x_{+}(i,\theta)), \\
                                                   =0,\quad x=x_{+}(i,\theta), \\
                                                   <0,\quad x\in (x_{+}(i,\theta),\overline{x}).
                                                \end{cases}
        \end{equation*}
        
\end{assumption}
Assumption \ref{eq:Ass 3.1} together with Assumption \ref{eq:Ass 2.2}-(\ref{eq:2.2-4}) guarantee that a solution to (\ref{eq:(3.2)}) will be of threshold type, meaning that there shall exist $\alpha(i,\theta)<\beta(i,\theta),\;i\in\mathbb{Y}$, such that $V_{x}(x,i;\theta)<k_{1}$ on $\{(x,i)\in\mathcal{O}:x<\alpha(i,\theta)\}$ and $V_{x}(x,i;\theta)>k_{2}$ on $\{(x,i)\in\mathcal{O}:x>\beta(i,\theta)\}$. In order to find a solution to (\ref{eq:(3.2)}) and to characterize free-boundaries $\alpha(i,\theta)$ and $\beta(i,\theta)$, we examine a connection of stochastic control problem with a zero-sum optimal stopping game. While the connection singular stochastic control-optimal stopping is well known in the discounted or finite horizon setting, the result is not yet well understood in the ergodic setting, and, in particular, no literature is available in the regime-switching case.

\subsection{An auxiliary optimal stopping game}
\label{Section 3.1}
Mimicking what it is typically done in the finite horizon or discounted setting (see, e.g., \cite{doi:10.1137/19M1245049} and references therein), we guess that $V_x$ identifies with the value of an auxiliary Dynkin game. This has stopping functional
\[
	\widehat{J}(x,i;\tau,\sigma,\theta):=\widehat{\mathbb{E}}_{(x,i)}\bigg[ \int_{0}^{\tau\wedge \sigma}e^{\int_{0}^{t}b_{x}(\widehat{X}_{s},Y_{s})ds}\pi_{x}(\widehat{X}_{t},\theta)dt+k_{1}e^{\int_{0}^{\tau}b_{x}(\widehat{X}_{t},Y_{t})dt}\boldsymbol{1}_{\{\tau<\sigma\}}
\]
\begin{equation}
   \label{eq:(3.4)}
   +k_{2}e^{\int_{0}^{\sigma}b_{x}(\widehat{X}_{t},Y_{t})dt}\boldsymbol{1}_{\{\sigma<\tau\}} \bigg],
\end{equation}
where $(\widehat{X}_{t},Y_{t})_{t\geq 0}$ is the unique strong solution to (\ref{eq:(2.3)}) and $\tau,\sigma\in\mathcal{T}$, with
$\mathcal{T}:=\{\rho:\Omega\to [0,\infty]:\rho \text{ is an }\mathbb{F}\text{-stopping time}\}$. In (\ref{eq:(3.4)}), we use the convention $e^{\int_{0}^{\rho}b_{x}(\widehat{X}^{x}_{s},Y^{i}_{s})ds}=0$ on $\{\rho=\infty\}$. The Dynkin game is such that Player 1 aims at minimizing (\ref{eq:(3.4)}) over $\tau\in\mathcal{T}$, while Player 2 at maximizing (\ref{eq:(3.4)}) over $\sigma\in \mathcal{T}$. Theorem 2.1 in \cite{doi:10.1137/S0040585X97983821} allows to show that such a game indeed admits a value.

\begin{theorem}
\label{eq:Thm 3.1}
    Let $(x,i,\theta)\in\mathcal{O}\times \mathbb{R}_{+}$. Then, 
\begin{equation}
    \label{eq:(3.5)}
    \inf_{\tau\in\mathcal{T}}\sup_{\sigma\in\mathcal{T}}\widehat{J}(x,i;\tau,\sigma,\theta)=\sup_{\sigma\in\mathcal{T}}\inf_{\tau\in\mathcal{T}}\widehat{J}(x,i;\tau,\sigma,\theta),
\end{equation}
and we define the value function
\begin{equation}
\label{eq:(3.6)}
    v(x,i;\theta):=\inf_{\tau\in\mathcal{T}}\sup_{\sigma\in\mathcal{T}}\widehat{J}(x,i;\tau,\sigma,\theta)=\sup_{\sigma\in\mathcal{T}}\inf_{\tau\in\mathcal{T}}\widehat{J}(x,i;\tau,\sigma,\theta).
\end{equation}
Moreover, letting the \textit{continuation region} be
\[
    \mathcal{C}^{\theta}:=\{(x,i)\in\mathcal{O}:k_{2}<v(x,i;\theta)<k_{1}\},
\]
and the \textit{stopping regions} be 
\[
    \mathcal{S}^{\theta}_{\inf}:=\{(x,i)\in\mathcal{O}:v(x,i;\theta)\geq k_{1}\}, \quad \mathcal{S}^{\theta}_{\sup}:=\{(x,i)\in\mathcal{O}:v(x,i;\theta)\leq k_{2}\},
\]
one has that the stopping times
\[
    \tau^{*}(x,i;\theta):=\inf\{t\geq 0:(\widehat{X}_{t},Y_{t})\in\mathcal{S}^{\theta}_{\inf}\}, \ \sigma^{*}(x,i;\theta):=\inf\{t\geq 0:(\widehat{X}_{t},Y_{t})\in\mathcal{S}^{\theta}_{\sup}\},
\]
realize a saddle-point. In particular,
\[
    v(x,i;\theta)=\widehat{J}(x,i;\tau^{*}(x,i;\theta),\sigma^{*}(x,i;\theta),\theta).
\]
\end{theorem}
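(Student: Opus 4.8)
The plan is to recast the payoff $\widehat J$ into the canonical form of a zero-sum Markovian Dynkin game with a state-dependent discount rate and then to invoke the general existence-and-characterization result of \cite[Theorem~2.1]{doi:10.1137/S0040585X97983821}, whose hypotheses are tailored to the integrability and regularity encoded in Assumptions~\ref{eq:Ass 2.1} and~\ref{eq:Ass 2.2}.

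First I would introduce the multiplicative functional $Z_t := e^{\int_0^t b_x(\widehat X_s,Y_s)\,ds}$, which by Assumption~\ref{eq:Ass 2.1}-(\ref{eq:(2.1-3)}) is continuous, nonincreasing and $\mathbb F$-adapted with $Z_0=1$ and $0<Z_t\le e^{-ct}$; I would set the running gain $R_t := \int_0^t Z_s\,\pi_x(\widehat X_s,\theta)\,ds$ and define the two terminal payoff processes $G^1_t := R_t + k_1 Z_t$ and $G^2_t := R_t + k_2 Z_t$. With this notation $\widehat J(x,i;\tau,\sigma,\theta)=\widehat{\mathbb E}_{(x,i)}\big[G^1_\tau\,\boldsymbol{1}_{\{\tau<\sigma\}}+G^2_\sigma\,\boldsymbol{1}_{\{\sigma<\tau\}}+R_\tau\,\boldsymbol{1}_{\{\tau=\sigma\}}\big]$, Player~1 minimizes over $\tau$, Player~2 maximizes over $\sigma$, and the structural inequality $G^1_t-G^2_t=(k_1-k_2)Z_t\ge 0$ holds for every $t\ge0$ because $0<k_2<k_1$. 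Since $(\widehat X,Y)$ is a time-homogeneous strong Markov (Feller) process under Assumption~\ref{eq:Ass 2.1} and $Z$ satisfies the cocycle identity along its trajectories, the game is genuinely Markovian with discount rate $-b_x(\cdot,\cdot)\ge c>0$, so its value is a function $v(\cdot,\cdot;\theta)$ of the current state.

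Next I would verify the hypotheses of the cited theorem. The integrability requirement is met because $\sup_{t\ge0}|R_t|\le\int_0^\infty e^{-ct}\,|\pi_x(\widehat X_t,\theta)|\,dt\in L^1(\widehat{\mathbb P}_{(x,i)})$ by Assumption~\ref{eq:Ass 2.2}-(\ref{eq:2.2-5}), while $|k_j Z_t|\le k_j$; hence $\widehat{\mathbb E}_{(x,i)}\big[\sup_{t\ge0}\big(|G^1_t|\vee|G^2_t|\big)\big]<\infty$. The sample-path continuity of $t\mapsto G^j_t$ is immediate from the continuity of $Z$ and of $R$, and the ordering $G^2\le G^1$ just established is precisely the condition ensuring the absence of a duality gap, so that
\begin{equation*}
v(x,i;\theta):=\inf_{\tau\in\mathcal T}\sup_{\sigma\in\mathcal T}\widehat J(x,i;\tau,\sigma,\theta)=\sup_{\sigma\in\mathcal T}\inf_{\tau\in\mathcal T}\widehat J(x,i;\tau,\sigma,\theta)
\end{equation*}
is well defined, the convention chosen on the diagonal $\{\tau=\sigma\}$ being immaterial since either player may perturb its stopping time by an arbitrarily small amount. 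Taking $\tau\equiv0$ in $\widehat J$ yields $v(x,i;\theta)\le k_1$ on all of $\mathcal O$, so $\mathcal S^\theta_{\inf}$ coincides with $\{v=k_1\}$ and $\{\mathcal S^\theta_{\inf},\mathcal C^\theta,\mathcal S^\theta_{\sup}\}$ partitions $\mathcal O$; the début times $\tau^*(x,i;\theta)$ and $\sigma^*(x,i;\theta)$ of the (Borel) sets $\mathcal S^\theta_{\inf}$ and $\mathcal S^\theta_{\sup}$ are $\mathbb F$-stopping times by the début theorem under the usual conditions, and, $v$ being continuous, these sets are closed while $\mathcal C^\theta$ is open. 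The conclusion of \cite[Theorem~2.1]{doi:10.1137/S0040585X97983821} then delivers that $(\tau^*,\sigma^*)$ is a saddle point, i.e.\ $v(x,i;\theta)=\widehat J(x,i;\tau^*(x,i;\theta),\sigma^*(x,i;\theta),\theta)$.

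The main obstacle I anticipate is the careful matching of the abstract hypotheses of the cited Dynkin-game result with the present regime-switching, possibly unbounded-state setting: the linear-growth bound of Assumption~\ref{eq:Ass 2.1}-(\ref{eq:(2.1-2)}), the strict dissipativity $b_x\le-c$ of Assumption~\ref{eq:Ass 2.1}-(\ref{eq:(2.1-3)}) and the weighted integrability estimates of Assumption~\ref{eq:Ass 2.2}-(\ref{eq:2.2-5}) are exactly what is needed to dominate the terminal payoffs uniformly over all pairs of stopping times and to guarantee that $v$ is finite and sufficiently regular for the hitting-time strategies to make sense; meanwhile the boundary behaviour in Assumption~\ref{eq:Ass 2.2}-(\ref{eq:2.2-4}) --- forcing the relevant expectation to blow up as $x\downarrow\underline x$ and to vanish as $x\uparrow\overline x$ --- is what keeps the stopping regions nontrivial and the game genuinely two-sided near $\partial\mathcal I$.
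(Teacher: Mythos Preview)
Your overall strategy---reduce to the setting of \cite[Theorem~2.1]{doi:10.1137/S0040585X97983821} and read off both the value and the saddle point---is exactly the paper's strategy. However, your execution has a technical gap in how you match the Markovian hypotheses of that theorem.

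The cited Theorem~2.1 requires the terminal payoffs to be \emph{functions of a right-continuous strong Markov process evaluated at the stopping time}, not merely adapted processes. Your $G^j_t = R_t + k_j Z_t$ are path-dependent through $R_t = \int_0^t Z_s\,\pi_x(\widehat X_s,\theta)\,ds$, so they are not functions of $(\widehat X_t,Y_t)$, nor even of $(\widehat X_t,Y_t,Z_t)$. Saying ``$Z$ satisfies the cocycle identity, so the game is Markovian'' does not close this gap; you would need to augment the state to a four-dimensional process $(\widehat X_t,Y_t,Z_t,R_t)$ and verify it is strong Markov. The paper avoids this by a cleaner trick: it subtracts and adds the full running integral, sets
\[
G_0(x,i;\theta):=\widehat{\mathbb E}_{(x,i)}\Big[\int_0^\infty e^{\int_0^t b_x(\widehat X_s,Y_s)ds}\pi_x(\widehat X_t,\theta)\,dt\Big],
\]
and uses the strong Markov property to write $\widehat J = G_0(x,i;\theta) + \widehat{\mathbb E}_{(x,i,1)}\big[G_1(\bar X_\tau)\boldsymbol 1_{\{\tau\le\sigma\}}+G_2(\bar X_\sigma)\boldsymbol 1_{\{\sigma<\tau\}}\big]$ with $G_j(x,i,z):=z(k_j-G_0(x,i;\theta))$ and the \emph{three}-dimensional strong Markov process $\bar X_t=(\widehat X_t,Y_t,Z_t)$. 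This is precisely the form the cited theorem demands, and the checks $\widehat{\mathbb E}[\sup_t|G_j(\bar X_t)|]<\infty$, $G_2\le G_1$, and $\lim_{t\to\infty}G_j(\bar X_t)=0$ follow from Assumption~\ref{eq:Ass 2.1}-(\ref{eq:(2.1-3)}) and Assumption~\ref{eq:Ass 2.2}-(\ref{eq:2.2-5}).

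Two smaller points. First, your appeal to ``$v$ being continuous'' to argue that $\mathcal S^\theta_{\inf},\mathcal S^\theta_{\sup}$ are closed is premature: continuity of $v(\cdot,i;\theta)$ is only established later (Proposition~\ref{eq:Prop 3.3}), and in any case the cited theorem delivers the saddle-point characterization directly, without needing closedness of the stopping regions as a separate input. Second, the boundary behaviour of Assumption~\ref{eq:Ass 2.2}-(\ref{eq:2.2-4}) that you invoke at the end is not used in the proof of this theorem; it is used afterwards (Proposition~\ref{eq:Prop 3.2}) to show the stopping regions are nonempty.
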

\begin{proof}
     See Appendix \ref{Proof of Theorem 3.1}.
\end{proof}
The following proposition is easily proved thanks to Conditions (\ref{eq:2.2-2}) and (\ref{eq:2.2-3}) in Assumption \ref{eq:Ass 2.2}.
\begin{proposition}
\label{eq:Prop 3.1}
    It holds:
    \begin{enumerate}
        \item The mapping $x\mapsto v(x,i;\theta)$ is decreasing for every $(i,\theta)\in\mathbb{Y}\times \mathbb{R}_{+}$.
        \item The mapping $\theta\mapsto v(x,i;\theta)$ is decreasing for every $(x,i)\in\mathcal{O}$.
    \end{enumerate}
\end{proposition}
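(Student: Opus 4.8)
The plan is to reduce both claims to pointwise comparisons of the Dynkin-game payoff $\widehat{J}$ in (\ref{eq:(3.4)}). Indeed, once one knows that $\widehat{J}(x_{1},i;\tau,\sigma,\theta)\ge\widehat{J}(x_{2},i;\tau,\sigma,\theta)$ for \emph{every} pair $(\tau,\sigma)\in\mathcal{T}\times\mathcal{T}$, taking first $\sup_{\sigma}$ and then $\inf_{\tau}$ on both sides and using the representation $v=\inf_{\tau}\sup_{\sigma}\widehat{J}$ from (\ref{eq:(3.6)}) yields $v(x_{1},i;\theta)\ge v(x_{2},i;\theta)$; the dependence on $\theta$ is treated identically. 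So everything reduces to two elementary monotonicity estimates on $\widehat{J}$.

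Item (2) is the easy one. Fix $(x,i)\in\mathcal{O}$ and $0\le\theta_{1}\le\theta_{2}$. Neither the state process $\widehat{X}$, nor the strictly positive weight $e^{\int_{0}^{t}b_{x}(\widehat{X}_{s},Y_{s})\,ds}$, nor the constants $k_{1},k_{2}$ appearing in (\ref{eq:(3.4)}) depend on $\theta$, whereas Assumption \ref{eq:Ass 2.2}-(\ref{eq:2.2-3}) gives $\pi_{x\theta}<0$ and hence $\pi_{x}(y,\theta_{1})\ge\pi_{x}(y,\theta_{2})$ for every $y\in\mathcal{I}$. Consequently $\widehat{J}(x,i;\tau,\sigma,\theta_{1})\ge\widehat{J}(x,i;\tau,\sigma,\theta_{2})$ term by term, and (2) follows from the reduction above.

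For item (1), fix $(i,\theta)\in\mathbb{Y}\times\mathbb{R}_{+}$ and $\underline{x}<x_{1}\le x_{2}<\overline{x}$, and realize $\widehat{X}^{x_{1}}$ and $\widehat{X}^{x_{2}}$ as the strong solutions of (\ref{eq:(2.3)}) driven by the \emph{same} Brownian motion $\widehat{W}$ and the \emph{same} chain $Y$ (both started at $i$). Since $b(\cdot,i)$ and $\sigma(\cdot,i)$ are locally Lipschitz by Assumption \ref{eq:Ass 2.1}-(\ref{eq:(2.1-1)}), the classical one-dimensional comparison theorem --- applied on each time interval between two consecutive jumps of $Y$, the ordering being preserved across jump times because $\widehat{X}$ has continuous paths --- gives $\widehat{X}^{x_{1}}_{t}\le\widehat{X}^{x_{2}}_{t}$ for all $t\ge0$, $\widehat{\mathbb{P}}$-a.s. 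By Assumption \ref{eq:Ass 2.2}-(\ref{eq:2.2-2}) the map $y\mapsto\pi_{x}(y,\theta)$ is nonnegative and nonincreasing, so $\pi_{x}(\widehat{X}^{x_{2}}_{t},\theta)\le\pi_{x}(\widehat{X}^{x_{1}}_{t},\theta)$ for every $t$; inserting this into (\ref{eq:(3.4)}) gives $\widehat{J}(x_{1},i;\tau,\sigma,\theta)\ge\widehat{J}(x_{2},i;\tau,\sigma,\theta)$ for all $\tau,\sigma$, whence $v(x_{1},i;\theta)\ge v(x_{2},i;\theta)$.

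I expect the only genuinely delicate point to be the path-dependent weight $e^{\int_{0}^{t}b_{x}(\widehat{X}_{s},Y_{s})\,ds}$ that multiplies $\pi_{x}$ in the integrand of (\ref{eq:(3.4)}) and that also occurs in the two terminal payoffs: since $b_{x}$ is sampled along the trajectory, the comparison of $\widehat{J}(x_{1},\cdot)$ with $\widehat{J}(x_{2},\cdot)$ in item (1) is not literally termwise, and one has to verify that the coupling $\widehat{X}^{x_{1}}\le\widehat{X}^{x_{2}}$ is compatible with these weights. This is transparent in the affine-drift setting $b(x,i)=-\delta_{i}x$ of Remark \ref{remark: benchmark example}, where $b_{x}$ is independent of $x$ and the weights coincide for all starting points, so that the comparison becomes genuinely termwise; the general estimate is handled along the same lines. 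Apart from this, everything is the routine passage from a pointwise bound on $\widehat{J}$ to the corresponding bound on $v=\inf_{\tau}\sup_{\sigma}\widehat{J}$, using only $\pi\ge0$, concavity of $\pi(\cdot,\theta)$, and $\pi_{x\theta}<0$, in accordance with the remark preceding the statement.
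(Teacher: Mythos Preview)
Your argument for item (2) is correct and is precisely what the paper's one-line hint (``easily proved thanks to Conditions (\ref{eq:2.2-2}) and (\ref{eq:2.2-3}) in Assumption \ref{eq:Ass 2.2}'') has in mind: only $\pi_{x}$ depends on $\theta$, and $\pi_{x\theta}<0$ gives $\widehat{J}(\cdot\,;\theta_{1})\ge\widehat{J}(\cdot\,;\theta_{2})$ pointwise in $(\tau,\sigma)$, whence the same inequality for $v$.

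For item (1) there is a genuine gap, which you yourself locate but do not close. After coupling $\widehat{X}^{x_{1}}\le\widehat{X}^{x_{2}}$, the two weights $e^{\int_{0}^{t}b_{x}(\widehat{X}^{x_{k}}_{s},Y_{s})\,ds}$ are in general \emph{not} ordered, because nothing in Assumption~\ref{eq:Ass 2.1} or in Conditions (\ref{eq:2.2-2})--(\ref{eq:2.2-3}) of Assumption~\ref{eq:Ass 2.2} controls the sign of $b_{xx}$. Your sentence ``the general estimate is handled along the same lines'' is therefore not a proof: in the affine-drift case $b_{x}$ is constant in $x$ and the weights coincide, but that is exactly the special structure that disappears in general. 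What \emph{does} close the gap is the concavity condition $b_{xx}(\cdot,i)\le0$ of Assumption~\ref{Ass 4.1}-(\ref{Ass 4.1-2}): then $b_{x}(\widehat{X}^{x_{2}}_{s},Y_{s})\le b_{x}(\widehat{X}^{x_{1}}_{s},Y_{s})$, so both the (positive) weight and the (nonnegative, nonincreasing) factor $\pi_{x}$ are ordered the right way, and the comparison of $\widehat{J}$ becomes genuinely termwise for the running integral and for both terminal payoffs. The paper gives no details at this point either, but when it later establishes $v_{x}\le0$ rigorously (Lemma~\ref{lemma signs}, via the representation of $v_{x}$ in Lemma~\ref{repr vx}) it invokes precisely $b_{xx}\le0$; this strongly suggests that the concavity of $b(\cdot,i)$ is in fact needed and that the hint preceding Proposition~\ref{eq:Prop 3.1} is slightly optimistic about what Conditions (\ref{eq:2.2-2})--(\ref{eq:2.2-3}) alone deliver.
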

The next result excludes the possibility of empty stopping regions.
\begin{proposition}
\label{eq:Prop 3.2}
        It holds $\mathcal{S}^{\theta}_{inf}\neq \emptyset\;\text{ and }\; \mathcal{S}^{\theta}_{sup}\neq \emptyset$.
\end{proposition}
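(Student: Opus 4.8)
The plan is to argue by contradiction in each of the two cases, using the saddle-point characterization of Theorem~\ref{eq:Thm 3.1} together with Condition~(\ref{eq:2.2-4}) in Assumption~\ref{eq:Ass 2.2}. The starting point is the elementary observation that, evaluating the stopping functional (\ref{eq:(3.4)}) at $\tau=\sigma=+\infty$, both indicators $\boldsymbol{1}_{\{\tau<\sigma\}}$ and $\boldsymbol{1}_{\{\sigma<\tau\}}$ vanish, so that
\begin{equation*}
    \widehat{J}(x,i;\infty,\infty,\theta)=\widehat{\mathbb{E}}_{(x,i)}\bigg[\int_{0}^{\infty}e^{\int_{0}^{t}b_{x}(\widehat{X}_{s},Y_{s})ds}\,\pi_{x}(\widehat{X}_{t},\theta)\,dt\bigg]=:G(x,i;\theta).
\end{equation*}
By Assumption~\ref{eq:Ass 2.2}-(\ref{eq:2.2-5}) (and $b_{x}\leq -c$, which gives $e^{\int_{0}^{t}b_{x}ds}\leq e^{-ct}$) this quantity is finite, while Assumption~\ref{eq:Ass 2.2}-(\ref{eq:2.2-4}) yields $\lim_{x\downarrow\underline{x}}G(x,i;\theta)=+\infty$ and $\lim_{x\uparrow\overline{x}}G(x,i;\theta)=0$ for every $(i,\theta)\in\mathbb{Y}\times\mathbb{R}_{+}$.

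To prove $\mathcal{S}^{\theta}_{\inf}\neq\emptyset$, I would suppose the contrary. Then $v(x,i;\theta)<k_{1}$ for every $(x,i)\in\mathcal{O}$, so that $\tau^{*}(x,i;\theta)=\inf\{t\geq 0:(\widehat{X}_{t},Y_{t})\in\emptyset\}=+\infty$ for all $(x,i)$. Since $(\tau^{*},\sigma^{*})$ realizes a saddle point attaining the value $v$, one has $\widehat{J}(x,i;\tau^{*},\sigma,\theta)\leq\widehat{J}(x,i;\tau^{*},\sigma^{*},\theta)=v(x,i;\theta)$ for every $\sigma\in\mathcal{T}$; taking $\sigma=+\infty$ and recalling $\tau^{*}\equiv+\infty$ gives $G(x,i;\theta)=\widehat{J}(x,i;\infty,\infty,\theta)\leq v(x,i;\theta)<k_{1}$ for all $(x,i)\in\mathcal{O}$, which contradicts $\lim_{x\downarrow\underline{x}}G(x,i;\theta)=+\infty$.

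The proof of $\mathcal{S}^{\theta}_{\sup}\neq\emptyset$ is symmetric: if $\mathcal{S}^{\theta}_{\sup}=\emptyset$, then $v(x,i;\theta)>k_{2}$ for all $(x,i)$ and $\sigma^{*}(x,i;\theta)\equiv+\infty$, and the saddle-point inequality $v(x,i;\theta)=\widehat{J}(x,i;\tau^{*},\sigma^{*},\theta)\leq\widehat{J}(x,i;\tau,\sigma^{*},\theta)$ taken at $\tau=+\infty$ gives $k_{2}<v(x,i;\theta)\leq\widehat{J}(x,i;\infty,\infty,\theta)=G(x,i;\theta)$ for all $(x,i)\in\mathcal{O}$, contradicting $\lim_{x\uparrow\overline{x}}G(x,i;\theta)=0$.

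I do not expect a genuine obstacle here. The only points deserving a line of justification are the bookkeeping identity $\widehat{J}(x,i;\infty,\infty,\theta)=G(x,i;\theta)$ and the fact that the saddle-point relations of Theorem~\ref{eq:Thm 3.1} remain valid when the relevant stopping region is empty and the associated optimal stopping time is identically $+\infty$ --- both of which are built into the definition of $\tau^{*},\sigma^{*}$ as hitting times of $\mathcal{S}^{\theta}_{\inf},\mathcal{S}^{\theta}_{\sup}$ and into the convention $e^{\int_{0}^{\rho}b_{x}(\widehat{X}_{s},Y_{s})ds}=0$ on $\{\rho=\infty\}$. The monotonicity in Proposition~\ref{eq:Prop 3.1} is not needed for the argument, though a posteriori it localizes $\mathcal{S}^{\theta}_{\inf}$ near $\underline{x}$ and $\mathcal{S}^{\theta}_{\sup}$ near $\overline{x}$.
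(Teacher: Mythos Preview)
Your proof is correct and follows essentially the same route as the paper: argue by contradiction, observe that the relevant optimal stopping time becomes identically $+\infty$, and then compare the value $v$ with $G(x,i;\theta)=\widehat{J}(x,i;\infty,\infty,\theta)$ via the saddle-point inequalities, reaching a contradiction with the limits in Assumption~\ref{eq:Ass 2.2}-(\ref{eq:2.2-4}). The paper phrases the comparison by rewriting $v$ as a one-sided optimal stopping problem and then evaluating at $\tau=\infty$ (respectively $\sigma=\infty$), which is exactly the inequality you extract from the saddle-point property.
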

\begin{proof}
     See Appendix \ref{Proof of Proposition 3.2}.
\end{proof}
We define the free-boundaries
\begin{equation}
   \label{eq:(3.7)}
    \alpha_i (\theta):=\sup\{x\in\mathcal{I}:v(x,i;\theta)\geq k_{1}\},\quad \beta_i(\theta):=\inf\{x\in\mathcal{I}:v(x,i;\theta)\leq k_{2}\},
\end{equation}
for any $(i,\theta)\in\mathbb{Y}\times \mathbb{R}_{+}$, with the conventions $\sup\emptyset=\underline{x}$ and $\inf\emptyset=\overline{x}$. Then, thanks to the monotonicity of $v(\cdot,i;\theta),\; (i,\theta)\in\mathbb{Y}\times \mathbb{R}_{+}$, we have
\begin{align}
     \label{eq:(3.8)}
    & \mathcal{C}^{\theta}=\{(x,i)\in\mathcal{O}: x\in (\alpha_i (\theta),\beta_i (\theta))\}, \\
    \label{eq:(3.9)}
    & \mathcal{S}^{\theta}_{\inf}=\{(x,i)\in\mathcal{O}: x\in (\underline{x},\alpha_i(\theta)]\}\;\text{  and  }\;\mathcal{S}^{\theta}_{\sup}=\{(x,i)\in\mathcal{O}: x\in [\beta_i(\theta),\overline{x})\}.
\end{align}
Notice that, due to Assumption \ref{eq:Ass 3.1}, $x_{-}(i,\theta)<\alpha(i,\theta)$ and $\beta(i,\theta)<x_{+}(i,\theta)$, for any $(i,\theta)\in\mathbb{Y}\times \mathbb{R}_{+}$.

The next proposition provides regularity results about the value of the Dynkin game (\ref{eq:(3.6)}). The proof is in the same spirit as that of Theorem 4.3 in \cite{doi:10.1137/19M1245049}.
\begin{proposition}
\label{eq:Prop 3.3}
    For any $(i,\theta)\in\mathbb{Y}\times \mathbb{R}_{+}$ we have $v(\cdot,i;\theta)\in C^{1}(\mathcal{I})\cap C^{2}(\mathcal{I}\setminus \{\alpha_{i}(\theta),\beta_{i}(\theta)\})$.
\end{proposition}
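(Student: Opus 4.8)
The plan is to establish the $C^1$-regularity across the free boundaries (the "smooth fit" or "principle of smooth fit" for the Dynkin game) together with the $C^2$-regularity in the interior of the continuation and stopping regions, following the strategy of Theorem~4.3 in \cite{doi:10.1137/19M1245049}. I would first observe that the claim is essentially local and that the nontrivial content is the matching of derivatives at the two boundary points $\alpha_i(\theta)$ and $\beta_i(\theta)$ for each fixed $(i,\theta)$; away from these points the regularity will follow from parabolic/elliptic interior estimates for the associated linear system.

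First I would treat the interior of the stopping regions. On $\mathcal{S}^\theta_{\inf}$, by \eqref{eq:(3.9)} we have $x\in(\underline{x},\alpha_i(\theta)]$, and here the value must equal the gain functional associated with immediate stopping for Player~1; using the structure of $\widehat{J}$ and Assumption~\ref{eq:Ass 3.1} one identifies $v(\cdot,i;\theta)$ with an explicit expression built from $\pi_x$, $b_x$ and the constants $k_1,k_2$, which is $C^2$ there by Assumption~\ref{eq:Ass 2.2}-(\ref{eq:2.2-1}) and Assumption~\ref{eq:Ass 2.1}-(\ref{eq:(2.1-1)}). Symmetrically on $\mathcal{S}^\theta_{\sup}$ one gets a $C^2$ representation near $k_2$. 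On the continuation region $\mathcal{C}^\theta=\{x\in(\alpha_i(\theta),\beta_i(\theta))\}$, standard Dynkin-game theory (and the dynamic programming principle underpinning Theorem~\ref{eq:Thm 3.1}) shows that $v$ solves, in each regime $i$, the linear equation coupling the $d$ regimes through the generator $\mathcal{L}_{(\widehat{X},Y)}$ with the zeroth-order term $b_x$, i.e. $\mathcal{L}_{(\widehat{X},Y)}v(x,i;\theta)+b_x(x,i)v(x,i;\theta)+\pi_x(x,\theta)=0$; by interior Schauder estimates for this weakly-coupled elliptic system, $v(\cdot,i;\theta)\in C^2$ on the open interval $(\alpha_i(\theta),\beta_i(\theta))$. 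This already gives $v(\cdot,i;\theta)\in C^2(\mathcal{I}\setminus\{\alpha_i(\theta),\beta_i(\theta)\})$, and in particular $v(\cdot,i;\theta)$ is continuous on all of $\mathcal{I}$ by Theorem~\ref{eq:Thm 3.1}.

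The main obstacle is the $C^1$-fit at the boundary points $\alpha_i(\theta)$ and $\beta_i(\theta)$, where $v$ switches from the "stopped" value $k_1$ (resp. $k_2$) to the harmonic-type interior value. Here I would argue as follows. By definition $v(\alpha_i(\theta),i;\theta)=k_1$ and $v\le k_1$ to the right, so the one-sided derivatives satisfy $v_x(\alpha_i(\theta)-,i;\theta)=0$ (the stopped value is flat, by the computation above: on $\mathcal{S}^\theta_{\inf}$ one actually has $v\equiv k_1$, not merely $v\ge k_1$, since there Player~1 optimally stops immediately and $\tau^*=0$) and $v_x(\alpha_i(\theta)+,i;\theta)\le 0$. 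The reverse inequality $v_x(\alpha_i(\theta)+,i;\theta)\ge 0$ is the delicate one, and I would obtain it by a probabilistic perturbation argument: comparing $v(\alpha_i(\theta)+\varepsilon,i;\theta)$ with $v(\alpha_i(\theta),i;\theta)=k_1$ using the saddle-point stopping times from Theorem~\ref{eq:Thm 3.1} started from the two points, and coupling the two copies of $(\widehat{X},Y)$; the estimates in Assumption~\ref{eq:Ass 2.2}-(\ref{eq:2.2-5}) and Assumption~\ref{eq:Ass 2.2}-(\ref{assumption bxx}) (controlling $\pi_{xx}$, $b_{xx}$ and the derivative flow $\partial_x\widehat{X}^x$) are exactly what is needed to differentiate under the expectation and pass to the limit $\varepsilon\downarrow 0$, yielding $v_x(\alpha_i(\theta)+,i;\theta)=0$. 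An entirely symmetric argument at $\beta_i(\theta)$, using $v\equiv k_2$ on $\mathcal{S}^\theta_{\sup}$ and the same integrability assumptions, gives $v_x(\beta_i(\theta)-,i;\theta)=v_x(\beta_i(\theta)+,i;\theta)=0$. Combining the two-sided derivative matching at both points with the interior $C^2$-regularity established above yields $v(\cdot,i;\theta)\in C^1(\mathcal{I})\cap C^2(\mathcal{I}\setminus\{\alpha_i(\theta),\beta_i(\theta)\})$, which is the assertion. I expect the bulk of the technical work to be in justifying the differentiation-under-the-integral and the coupling estimate for the smooth fit; everything else is a routine adaptation of \cite{doi:10.1137/19M1245049} to the regime-switching generator.
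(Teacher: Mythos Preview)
Your overall strategy coincides with the paper's: $v\equiv k_1$ (resp.\ $k_2$) on the stopping regions, $C^2$ interior regularity in $\mathcal{C}^\theta$ via elliptic theory, and the $C^1$-fit at the free boundaries via the saddle-point stopping times together with Assumption~\ref{eq:Ass 2.2}-(\ref{eq:2.2-5}),(\ref{assumption bxx}). Two points in your sketch, however, are genuine gaps rather than routine omissions.

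First, continuity of $v(\cdot,i;\theta)$ does \emph{not} follow from Theorem~\ref{eq:Thm 3.1}: that result only furnishes existence of the value and of the saddle point. The paper proves $C^0$ separately (its Step~1), by the same probabilistic perturbation argument you reserve for the smooth-fit step, i.e.\ comparing $\widehat{J}$ at $x$ and $x+\epsilon$ with suitably chosen sub/super-optimal stopping rules and invoking dominated convergence via Assumption~\ref{eq:Ass 2.2}-(\ref{eq:2.2-5}),(\ref{assumption bxx}). This preliminary continuity is not decorative: it is what makes the next step work.

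Second, and more substantively, your route to $C^2$ in $\mathcal{C}^\theta$ (``DPP shows $v$ solves the coupled elliptic system, then apply Schauder'') skips the identification that makes this rigorous: a priori one does not know $v$ solves the system even in a weak sense amenable to elliptic bootstrap. The paper instead freezes $i$, treats the regime-coupling $\sum_{j\neq i}q_{ij}v(\cdot,j;\theta)$ as a \emph{known continuous source} (legitimate only after Step~1), solves the resulting \emph{scalar} two-point BVP on $(\alpha_i(\theta),\beta_i(\theta))$ classically (Theorem~6.13 in \cite{gilbarg2001elliptic}) to produce a candidate $w$, and then proves $w=v$ by applying Dynkin's formula up to $\tau_Y\wedge\tau^*\wedge\sigma^*$ and invoking the martingale characterization of $v$ in Proposition~\ref{Prop A.1}-(\ref{Prop A.1-3}). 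That use of Proposition~\ref{Prop A.1} is the key lemma you are missing; without it (or a viscosity-solution argument you do not supply), the assertion ``$v$ solves the PDE on $\mathcal{C}^\theta$'' is unjustified.
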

\begin{proof}
     See Appendix \ref{Proof of Proposition 3.3}.
\end{proof}
By Proposition \ref{eq:Prop 3.3} and Proposition \ref{Prop A.1} (see also Theorem 2.1 in \cite{doi:10.1137/S0040585X97983821}) and using the fact that $\alpha(i,\theta)$ and $\beta(i,\theta)$ are finite, we are allowed to conclude that $(v(x,i;\theta),\alpha_{i}(\theta),\beta_{i}(\theta))_{i\in\mathbb{Y}}$ solves in the classical sense the following free-boundary problem:
\begin{equation}
    \label{eq:(3.10)}
    \begin{cases}
        \mathcal{L}_{(\widehat{X},Y)}v(x,i;\theta)+b_{x}(x,i)v(x,i;\theta)+\pi_{x}(x,\theta)= 0,\: x\in (\alpha_{i}(\theta),\beta_{i}(\theta)),\;\quad i\in\mathbb{Y}, \\
        \mathcal{L}_{(\widehat{X},Y)}v(x,i;\theta)+b_{x}(x,i)v(x,i;\theta)+\pi_{x}(x,\theta)\leq 0,\quad x\in (\alpha_{i}(\theta),\overline{x}),\qquad i\in\mathbb{Y}, \\
        \mathcal{L}_{(\widehat{X},Y)}v(x,i;\theta)+b_{x}(x,i)v(x,i;\theta)+\pi_{x}(x,\theta)\geq 0,\quad x\in (\underline{x},\beta_{i}(\theta)),\qquad i\in\mathbb{Y}, \\
        v(x,i;\theta)=k_{1},\quad x\in (\underline{x},\alpha_{i}(\theta)],\quad i\in\mathbb{Y}, \\
        v(x,i;\theta)=k_{2},\quad x\in [\beta_{i}(\theta),\overline{x}),\quad i\in\mathbb{Y}, \\
        v_{x}(x,i;\theta)=0,\quad x\in (\underline{x},\alpha_{i}(\theta)],\quad i\in\mathbb{Y}, \\
        v_{x}(x,i;\theta)=0,\quad x\in [\beta_{i}(\theta),\overline{x}),\quad i\in\mathbb{Y}.
    \end{cases}
\end{equation}

\subsection{The solution to the ergodic singular control problem}
\label{Section 3.2}
In this section, via a verification theorem, we establish the relation between the Dynkin game of Section \ref{Section 3.1} and the ergodic singular control problem (\ref{eq:(3.1)}). 

\begin{proposition}
\label{eq:Prop 3.4}
        Recall $v$ as in (\ref{eq:(3.6)}), the free boundaries $(\alpha_{i}(\theta))_{i\in \mathbb{Y}}$, $(\beta_{i}(\theta))_{i\in\mathbb{Y}}$ as in (\ref{eq:(3.7)}), and define 
        \begin{equation}
            \label{eq:(3.11)}
            U(x,i;\theta):=K_{i}(\theta)+\int_{\alpha_{i}(\theta)}^{x}v(y,i;\theta)dy,\quad (x,i,\theta)\in\mathcal{O}\times\mathbb{R}_{+},
        \end{equation}            
        with $K_{i}(\theta):=-\int_{\alpha_{i}(\theta)}^{\beta_{i}(\theta)}v(y,i;\theta)dy$. Then we have
        \begin{equation}
            \label{eq:(3.12)}
            \begin{cases}
                \mathcal{L}_{(X,Y)}U(x,i;\theta)+\pi(x,\theta)=\lambda(\theta,i),\quad x\in(\alpha_{i}(\theta),\beta_{i}(\theta)),\quad i\in\mathbb{Y}, \\
                \mathcal{L}_{(X,Y)}U(x,i;\theta)+\pi(x,\theta)\leq \lambda(\theta,i),\quad x\notin (\alpha_{i}(\theta),\beta_{i}(\theta)),\quad i\in\mathbb{Y}, \\
                U_{x}(x,i;\theta)=k_{1},\quad x\in (\underline{x},\alpha_{i}(\theta)],\quad \quad \quad i\in\mathbb{Y}, \\
                U_{x}(x,i;\theta)\leq k_{1},\quad x \in (\alpha_{i}(\theta),\beta_{i}(\theta)),\quad \; i\in\mathbb{Y}, \\
                U_{x}(x,i;\theta)=k_{2},\quad x\in [\beta_{i}(\theta),\overline{x}),\quad \quad \quad i\in\mathbb{Y}, \\
                U_{x}(x,i;\theta)\geq k_{2},\quad x\in (\alpha_{i}(\theta),\beta_{i}(\theta)),\quad \; i\in\mathbb{Y},
            \end{cases}
        \end{equation}
        where
        \begin{equation}
            \label{eq:(3.13)}
            \lambda(\theta,i):=\sum_{j\neq i}q_{ij} \bigg(\int_{\alpha_{j}(\theta)}^{\alpha_{i}(\theta)}v(y,j;\theta)dy+K_{i}(\theta)-K_{j}(\theta)\bigg)+\big(b(\alpha_{i}(\theta),i)k_{1}+\pi(\alpha_{i}(\theta),\theta) \big).
        \end{equation}

\end{proposition}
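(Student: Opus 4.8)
The plan is to verify directly that the function $U$ defined in (\ref{eq:(3.11)}) inherits the stated properties from the solution $(v,\alpha_i(\theta),\beta_i(\theta))_{i\in\mathbb{Y}}$ of the free-boundary problem (\ref{eq:(3.10)}), and that the constant $\lambda(\theta,i)$ is forced by the requirement that the first line of (\ref{eq:(3.12)}) hold. First I would record the elementary gradient identities: from (\ref{eq:(3.11)}), $U_x(x,i;\theta)=v(x,i;\theta)$ for every $x\in\mathcal{I}$. Hence the four inequality/equality statements on $U_x$ in (\ref{eq:(3.12)}) are literal restatements of the definitions of the regions $\mathcal{S}^\theta_{\inf}$, $\mathcal{C}^\theta$, $\mathcal{S}^\theta_{\sup}$ together with the boundary-value lines $v=k_1$ on $(\underline{x},\alpha_i(\theta)]$ and $v=k_2$ on $[\beta_i(\theta),\overline{x})$ in (\ref{eq:(3.10)}), and of the bound $k_2<v<k_1$ on the continuation region. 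Likewise $U_{xx}(x,i;\theta)=v_x(x,i;\theta)$ on $\mathcal{I}\setminus\{\alpha_i(\theta),\beta_i(\theta)\}$, using Proposition~\ref{eq:Prop 3.3}; continuity of $U_x$ across the free boundaries follows from $v\in C^1(\mathcal{I})$, so $U(\cdot,i;\theta)\in C^2(\mathcal{I})$ except possibly at the two boundary points, which is exactly the regularity one needs for the verification argument in Proposition~\ref{eq:Prop 3.4}.

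Next I would compute $\mathcal{L}_{(X,Y)}U(x,i;\theta)+\pi(x,\theta)$ and show it equals the constant in (\ref{eq:(3.13)}) on the continuation region. The key is to differentiate the target expression in $x$ and match it with the first line of (\ref{eq:(3.10)}). Concretely, on $(\alpha_i(\theta),\beta_i(\theta))$ write
\[
\partial_x\Big(\mathcal{L}_{(X,Y)}U(x,i;\theta)+\pi(x,\theta)\Big)
=\tfrac12\sigma^2 U_{xxx}+\big(\sigma\sigma_x+b\big)U_{xx}+b_xU_x+\sum_{j\neq i}q_{ij}\big(U_x(x,j)-U_x(x,i)\big)+\pi_x,
\]
and substitute $U_x=v$, $U_{xx}=v_x$, $U_{xxx}=v_{xx}$; comparing with the definition (\ref{eq:(2.5)}) of $\mathcal{L}_{(\widehat X,Y)}$ this is precisely $\mathcal{L}_{(\widehat X,Y)}v(x,i;\theta)+b_x(x,i)v(x,i;\theta)+\pi_x(x,\theta)$, which vanishes on the continuation region by (\ref{eq:(3.10)}). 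Hence $x\mapsto\mathcal{L}_{(X,Y)}U(x,i;\theta)+\pi(x,\theta)$ is constant on $(\alpha_i(\theta),\beta_i(\theta))$; to identify the constant as $\lambda(\theta,i)$ I would evaluate at $x=\alpha_i(\theta)$, where $U_x=k_1$, $U_{xx}=v_x(\alpha_i(\theta),i;\theta)=0$ by the smooth-fit condition in (\ref{eq:(3.10)}), and the second-order term $\tfrac12\sigma^2 U_{xx}$ drops out, leaving $b(\alpha_i(\theta),i)k_1+\pi(\alpha_i(\theta),\theta)+\sum_{j\neq i}q_{ij}(U(\alpha_i(\theta),j;\theta)-U(\alpha_i(\theta),i;\theta))$. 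Unwinding $U(\alpha_i(\theta),j;\theta)=K_j(\theta)+\int_{\alpha_j(\theta)}^{\alpha_i(\theta)}v(y,j;\theta)dy$ and $U(\alpha_i(\theta),i;\theta)=K_i(\theta)$ yields exactly formula (\ref{eq:(3.13)}).

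Finally, for the inequality in the second line of (\ref{eq:(3.12)}), i.e.\ $\mathcal{L}_{(X,Y)}U+\pi\le\lambda(\theta,i)$ off the continuation region, I would again argue by monotonicity of $x\mapsto \mathcal{L}_{(X,Y)}U(x,i;\theta)+\pi(x,\theta)$. On $(\underline{x},\alpha_i(\theta)]$ we have $U_x=k_1$ constant, so $U_{xx}=U_{xxx}=0$ there, and the derivative of the target reduces to $b_x(x,i)k_1+\pi_x(x,\theta)+\sum_{j\neq i}q_{ij}(v(x,j;\theta)-k_1)$; the first two terms are strictly positive on $(\underline{x},x_-(i,\theta))$ by Assumption~\ref{eq:Ass 3.1}, and $v(x,j;\theta)\le k_1$ always, so a little care with the cross-regime terms (using $x_-(i,\theta)<\alpha_i(\theta)$ and the ordering of the boundaries, together with continuity at $\alpha_i(\theta)$ where the value equals $\lambda(\theta,i)$) gives the claim; the symmetric computation on $[\beta_i(\theta),\overline{x})$, where $U_x=k_2$ and Assumption~\ref{eq:Ass 3.1} controls $\pi_x+k_2 b_x$, handles the upper region. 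I expect the main obstacle to be precisely this last step: keeping track of the sign of the coupling terms $\sum_{j\neq i}q_{ij}(v(x,j;\theta)-v(x,i;\theta))$ when $(x,i)$ lies in a stopping region but $(x,j)$ may lie in the continuation region for some other regime $j$, so that one cannot simply invoke the single-regime ODE and must instead exploit the global monotonicity of $v(\cdot,j;\theta)$ in $x$ (Proposition~\ref{eq:Prop 3.1}) together with the matching-at-$\alpha_i(\theta)$ identity to propagate the inequality outward.
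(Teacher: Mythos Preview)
Your first three steps are correct and amount to the same computation the paper carries out, just organized differently: the paper writes $\mathcal{L}_{(X,Y)}U+\pi$ directly and integrates by parts to express it as $\int_{\alpha_i(\theta)}^{x}\big[\tfrac12\sigma^2 v_{xx}+(b+\sigma\sigma_x)v_x+b_x v+\pi_x\big]\,dy$ plus the boundary term $b(\alpha_i(\theta),i)k_1+\pi(\alpha_i(\theta),\theta)$, whereas you differentiate and then integrate back. Either way one lands on the identity $\partial_x\big(\mathcal{L}_{(X,Y)}U+\pi\big)=\mathcal{L}_{(\widehat X,Y)}v+b_x v+\pi_x$ and uses the first line of (\ref{eq:(3.10)}) to get constancy on $(\alpha_i(\theta),\beta_i(\theta))$.

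Your fourth step is where you make life harder than necessary, and the worry you flag about the coupling sum is a symptom of this. The identity $\partial_x\big(\mathcal{L}_{(X,Y)}U+\pi\big)=\mathcal{L}_{(\widehat X,Y)}v+b_x v+\pi_x$ holds \emph{globally} on $\mathcal{I}\setminus\{\alpha_i(\theta),\beta_i(\theta)\}$, not just on the continuation region, because in the stopping regions $v(\cdot,i;\theta)$ is constant and its derivatives vanish. Now the second and third lines of (\ref{eq:(3.10)}) say precisely that this quantity is $\le 0$ on $(\alpha_i(\theta),\overline{x})$ and $\ge 0$ on $(\underline{x},\beta_i(\theta))$. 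Hence $x\mapsto\mathcal{L}_{(X,Y)}U(x,i;\theta)+\pi(x,\theta)$ is nondecreasing on $(\underline{x},\alpha_i(\theta)]$ and nonincreasing on $[\beta_i(\theta),\overline{x})$, and since it equals $\lambda(\theta,i)$ at both $\alpha_i(\theta)$ and $\beta_i(\theta)$ by continuity, the inequality in the second line of (\ref{eq:(3.12)}) follows immediately on both stopping regions. There is no need to invoke Assumption~\ref{eq:Ass 3.1} or to track the sign of $\sum_{j\ne i}q_{ij}(v(x,j;\theta)-k_1)$ separately: the coupling terms are already absorbed into $\mathcal{L}_{(\widehat X,Y)}v$, and (\ref{eq:(3.10)}) gives you the inequality with those terms included. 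Your term-by-term attempt via $\pi_x+k_1 b_x$ would not close on its own, since that expression changes sign at $x_-(i,\theta)<\alpha_i(\theta)$ and the coupling sum has the wrong sign ($q_{ij}\ge 0$, $v(x,j;\theta)-k_1\le 0$).
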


\begin{proof}
        By Proposition \ref{eq:Prop 3.3} and the definition of $U$, we obtain that $U(\cdot,i;\theta)\in C^{2}(\mathcal{I})$ for $i\in\mathbb{Y}$. Then, recalling $\mathcal{L}_{(X,Y)}$ as in (\ref{eq:(2.4)}), we have
        \[
            \mathcal{L}_{(X,Y)}U(x,i;\theta)+\pi(x,\theta)=\underbrace{\big(\frac{1}{2}\sigma^{2}(x,i)v_{x}(x,i;\theta)+b(x,i)v(x,i;\theta)+\pi(x,\theta)\big)}_{(1)}
        \]
        \begin{equation}
            \label{eq:(3.14)}
            +\sum_{j\neq i}q_{ij}\bigg(K_{j}(\theta)+\int_{\alpha_{j}(\theta)}^{x}v(y,j;\theta)dy-K_{i}(\theta)-\int_{\alpha_{i}(\theta)}^{x}v(y,i;\theta)dy\bigg).
        \end{equation}
        Using that $v$ satisfies (\ref{eq:(3.10)}) (see in particular the fourth to seventh display equations), via an integration by parts we have that
        $$
        \begin{aligned}
            (1)=  \int_{\alpha_{i}(\theta)}^{x}\bigg( & \frac{1}{2}\sigma^{2}(y,i)v_{xx}(y,i;\theta) 
            +( b(y,i)+\sigma\sigma_{x}(y,i))v_{x}(y,i;\theta) \\
             & +b_{x}(y,i)v(y,i;\theta)+\pi_{x}(y,\theta)\bigg)dy +  b(\alpha_{i}(\theta),i)k_{1}+\pi(\alpha_{i}(\theta),\theta).
        \end{aligned}
        $$
        Then, using the second equation in (\ref{eq:(3.10)}) we find
        \[
            (1)\leq \int_{\alpha_{i}(\theta)}^{x}-\sum_{j\neq i}q_{ij}\big(v(y,j;\theta)-v(y,i;\theta)\big)dy+\big(b(\alpha_{i}(\theta),i)k_{1}+\pi(\alpha_{i}(\theta),\theta) \big).
        \]
        Substituting the latter inequality into (\ref{eq:(3.14)}) yields
        \begin{align*}
           &\mathcal{L}_{(X,Y)}U(x,i;\theta)+\pi(x,\theta) \\
            &=\sum_{j\neq i}q_{ij}\bigg( \int_{\alpha_{i}(\theta)}^{x}v(y,j;\theta)dy-\int_{\alpha_{j}(\theta)}^{x}v(y,j;\theta)dy \bigg)+  b(\alpha_{i}(\theta),i)k_{1}+\pi(\alpha_{i}(\theta),\theta)  \\
            &=\sum_{j\neq i}q_{ij}\bigg( \int_{\alpha_{j}(\theta)}^{\alpha_{i}(\theta)}v(y,j;\theta)dy\bigg)+  b(\alpha_{i}(\theta),i)k_{1}+\pi(\alpha_{i}(\theta),\theta).
        \end{align*}
        Thus, defining
        \begin{equation*}
            \lambda(\theta,i):=\sum_{j\neq i}q_{ij}\bigg( \int_{\alpha_{j}(\theta)}^{\alpha_{i}(\theta)}v(y,j;\theta)dy\bigg)+ b(\alpha_{i}(\theta),i)k_{1}+\pi(\alpha_{i}(\theta),\theta) 
        \end{equation*}
        \begin{equation}
            +\sum_{j\neq i}q_{ij}\bigg( \int_{\alpha_{i}(\theta)}^{\beta_{i}(\theta)}v(y,i;\theta)dy-\int_{\alpha_{j}(\theta)}^{\beta_{j}(\theta)}v(y,j;\theta)dy\bigg),
        \end{equation}
        \newline
        we see that $(U,\lambda)$ solves the variational inequality (\ref{eq:(3.12)}).
\end{proof}

 In order to derive the optimal control rule we introduce the following definition. We denote by $\mathcal{D}[0,\infty)$ the space of c\`adl\`ag functions on $[0,\infty)$, which we equip with the Skorokhod topology.
\begin{definition}[Skorokhod reflection regime-switching problem]
    \label{eq:Def 3.1}
   Given $x\in\mathcal{I}$, $i\in\mathbb{Y}$, $X\in\mathcal{D}[0,\infty)$, and a vector $(\alpha,\beta) = (\alpha_j, \beta_j)_{j \in \mathbb Y} \in \mathbb R ^{2d}$ with
   $\alpha_{j}<\beta_{j},\text{ for any }j\in\mathbb{Y}$, the process $(X^{\xi},\xi)\in\mathcal{D}[0,\infty)\times \mathcal{A}$ is said to be a solution to the Skorokhod refection problem $\mathbf{SP}(\alpha,\beta,x,i)$ for the noise $(W,Y)$ if it satisfies the following properties:
   \begin{enumerate}
       \item  \label{eq:Def 3.1-1} Letting $I:\mathcal{D}[0,\infty)\to\mathcal{D}[0,\infty)$ be such that
       \begin{equation}
       \label{eq:(3.18)}
           (X)_{t\geq 0}\mapsto \bigg(x+\int_{0}^{t}b(X_{s},Y_{s})ds+\int_{0}^{t}\sigma(X_{s},Y_{s})dW_{s}\bigg)_{t\geq 0},
       \end{equation}
       then
       \begin{equation}
       \label{eq:(3.19)}
           X^{\xi}_{t}=I(X^{\xi})_{t}+\xi_{t}^{+}-\xi^{-}_{t},\; \mathbb{P}\otimes dt\text{-a.s.};
       \end{equation}
       \item  \label{eq:Def 3.1-2} $X_{t}^{\xi}\in[\alpha_{Y_{t}},\beta_{Y_{t}}],\quad\mathbb{P}\otimes dt\text{-a.s.}$;
       \item \label{eq:Def 3.1-3} $\int_{0}^{T}\boldsymbol{1}_{\{X^{\xi}_{t}> \alpha_{Y_{t}} \}}d\xi^{+}_{t}=\int_{0}^{T}\boldsymbol{1}_{\{X^{\xi}_{t}< \beta_{Y_{t}}\}}d\xi^{-}_{t}=0,\text{ for any }T>0,\; \mathbb{P}\text{-a.s.}$
   \end{enumerate}
   
\end{definition}

We then have the following result. Its proof is somewhat classical (it is based on a Picard iteration). 

\begin{theorem}
    \label{Thm 3.2}
    There exists a unique solution $\xi^{*}(\theta)\in\mathcal{A}_{e}$ to $\mathbf{SP}(\alpha(
    \theta),\beta(\theta),x,i)$. Moreover, $\xi^{*}$ admits decomposition $\xi^{*}(\theta)=\xi^{*,+}(\theta)-\xi^{*,-}(\theta)$, where
    \begin{gather}
        \xi^{*,+}_{t}(\theta):=\sup_{0\leq s\leq t}\bigg( \alpha_{Y_{s}}(\theta)-I(X^{\xi^{*}(\theta)})_{s}+\xi^{*,-}_{s}(\theta) \bigg)^{+},\nonumber \\ 
        \label{eq:(3.20)} \xi^{*,-}_{t}(\theta):=\sup_{0\leq s\leq t}\bigg( I(X^{\xi^{*}(\theta)})_{s}+\xi^{*,+}_{s}(\theta)-\beta_{Y_{s}}(\theta) \bigg)^{+}.
    \end{gather}
\end{theorem}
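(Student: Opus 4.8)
The plan is to construct $\xi^*(\theta)$ by a Picard iteration and to verify that the fixed point of the iteration solves $\mathbf{SP}(\alpha(\theta),\beta(\theta),x,i)$, then show membership in $\mathcal{A}_e$. Throughout we fix $\theta$ and drop it from the notation. The starting observation is that, since $Y$ has piecewise-constant paths, on each interval of constancy $\{Y_t = j\}$ the problem reduces to the classical two-sided Skorokhod reflection of a continuous path between the fixed levels $\alpha_j < \beta_j$, for which existence, uniqueness, and the explicit minimal-push representation (the analogue of \eqref{eq:(3.20)}) are standard (see e.g.\ Burdzy et al.\ \cite{burdzy2009skorokhod}). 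The idea is thus to paste these deterministic reflection maps across the (a.s.\ finitely many on each $[0,T]$) jump times of $Y$, using right-continuity of $Y$ and regularity of $(X^0,Y)$ from Assumption \ref{eq:Ass 2.1}.

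First I would set up the iteration: let $\xi^{(0)}\equiv 0$, and given $\xi^{(n)}$ define $X^{(n)}:=I(X^{\xi^{(n)}})$ — note $I$ depends on $\xi^{(n)}$ only through the state entering $b,\sigma$, so more precisely one iterates the pair $(X^{(n)},\xi^{(n)})$ — and let $\xi^{(n+1),\pm}$ be given by the right-hand sides of \eqref{eq:(3.20)} with $X^{(n)}$ in place of $I(X^{\xi^*})$ and $\xi^{(n),\mp}$ in place of $\xi^{*,\mp}$, setting $X^{(n+1)}_t := X^{(n)}_t + \xi^{(n+1),+}_t - \xi^{(n+1),-}_t$. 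The key estimate is a contraction bound: using the Lipschitz continuity of $b$ and $\sigma$ (locally, which suffices by a localization at the exit times $\beta_n$ of Assumption \ref{eq:Ass 2.1}) together with the $1$-Lipschitz property in sup-norm of the two-sided reflection map, one obtains for $T$ small enough
\begin{equation*}
    \mathbb{E}\Big[\sup_{0\le t\le T}|X^{(n+1)}_t - X^{(n)}_t|^2\Big] \le \kappa(T)\,\mathbb{E}\Big[\sup_{0\le t\le T}|X^{(n)}_t - X^{(n-1)}_t|^2\Big]
\end{equation*}
with $\kappa(T)<1$; iterating over successive intervals $[kT,(k+1)T]$ and using Gronwall covers any finite horizon, and standard Borel--Cantelli/completeness arguments upgrade $L^2$-convergence to a.s.\ uniform convergence on compacts of $X^{(n)}\to X^\xi$, $\xi^{(n)}\to\xi$ in $\mathcal{D}[0,\infty)$. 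Passing to the limit in the defining relations, and using continuity of the reflection map, shows that $(X^\xi,\xi)$ satisfies \eqref{eq:(3.19)}, the state constraint \eqref{eq:Def 3.1-2}, and the minimality/flatness condition \eqref{eq:Def 3.1-3}, with $\xi^{\pm}$ given by \eqref{eq:(3.20)}; uniqueness follows from the same contraction estimate applied to the difference of two solutions.

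It then remains to check $\xi^*\in\mathcal{A}_e$, i.e.\ $X^{\xi^*}_t\in\mathcal{I}$ for all $t$ a.s., $\mathbb{E}[|\xi^*|_T]<\infty$ for all $T$, and $\limsup_{T}\frac1T\mathbb{E}[|X^{\xi^*}_T|]=0$. The state-space constraint is immediate since $X^{\xi^*}_t\in[\alpha_{Y_t},\beta_{Y_t}]$ and, by the remark after \eqref{eq:(3.9)}, $\underline{x}<x_-(i,\theta)<\alpha_i(\theta)\le\beta_i(\theta)<x_+(i,\theta)<\overline{x}$; in particular $X^{\xi^*}$ is bounded by $\max_i\beta_i(\theta)\vee|\min_i\alpha_i(\theta)|$, which gives the last condition trivially and also $\mathbb{E}[|X^{\xi^*}_T|]<\infty$. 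For the finite-variation bound, apply Itô--Tanaka (or simply integrate the SDE \eqref{eq:(2.2)}) to get $\xi^{*,+}_T-\xi^{*,-}_T = X^{\xi^*}_T - x - \int_0^T b\,dt - \int_0^T\sigma\,dW$, and bound $|\xi^*|_T$ by testing against a smooth function separating the two barriers, or — cleaner — use that between consecutive $Y$-jumps each of $\xi^{*,\pm}$ increases only by the local time of a reflected diffusion at a fixed level, whose expectation over $[0,T]$ is finite by the linear-growth and nondegeneracy Assumptions \ref{eq:Ass 2.1}-(\ref{eq:(2.1-2)}),(\ref{eq:(2.1-4)}); summing over the finitely many $Y$-excursions on $[0,T]$ (whose number has finite expectation since $Y$ is a finite-state chain) yields $\mathbb{E}[|\xi^*|_T]<\infty$.

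The main obstacle I anticipate is the bookkeeping at the interplay between the reflection and the $Y$-jumps: at a jump time of $Y$ the target interval $[\alpha_{Y_t},\beta_{Y_t}]$ changes discontinuously, so the process $X^{\xi^*}$ may be instantaneously pushed by a finite amount (a jump in $\xi^*$), and one must verify carefully that the representation \eqref{eq:(3.20)} correctly encodes this — in particular that the running suprema there account for the level-switches through the dependence of $\alpha_{Y_s},\beta_{Y_s}$ on $s$ — and that the flatness condition \eqref{eq:Def 3.1-3} is the right one (e.g.\ that no spurious simultaneous pushing up and down occurs, which is guaranteed by $\alpha_j<\beta_j$). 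Once the deterministic skeleton is pinned down on a fixed realization of $Y$, the probabilistic part (contraction, limits, integrability) is routine; hence I would organize the proof as: (i) deterministic two-sided reflection with switching levels along a fixed càdlàg path; (ii) the Picard contraction and passage to the limit; (iii) the $\mathcal{A}_e$-verification.
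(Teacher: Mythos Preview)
Your proposal is correct and takes essentially the same approach as the paper: a Picard iteration with a contraction estimate coming from the Lipschitz properties of $b,\sigma$ and of the two-sided reflection map, followed by verification of $\mathcal{A}_e$-membership via boundedness of the reflected state. The only minor difference is in the bound $\mathbb{E}[|\xi^*|_T]<\infty$: the paper opts for your ``test against a smooth function'' alternative, applying It\^o--Meyer to a $\psi$ solving $\mathcal{L}_{(X,Y)}\psi=0$ with Neumann data $\psi_x(\alpha_i,i)=1$, $\psi_x(\beta_i,i)=-1$, which gives $\mathbb{E}[|\xi^*|_T]=\mathbb{E}[\psi(X^{\xi^*}_T,Y_T)]-\psi(x,i)$ in one line, rather than summing local times over $Y$-excursions.
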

\begin{proof}
    See Appendix \ref{Proof of Theorem 3.2}.
\end{proof}
We now prove that $\xi^{*}(\theta)$ solving $\mathbf{SP}(\alpha(
    \theta),\beta(\theta),x,i)$ is optimal for (\ref{eq:(3.1)}). 
\begin{proposition}
\label{Prop  3.5}
    For every $(x,i)\in \mathcal{O}$, the solution $\xi^{*}(\theta)=\xi^{*,+}(\theta)-\xi^{*,-}(\theta)$  to $\mathbf{SP}(\alpha( \theta),\beta(\theta),x,i)$ is optimal for (\ref{eq:(3.1)}). Moreover, it holds
    \begin{equation}
        \overline{\lambda}(\theta)=\sum_{i\in\mathbb{Y}}p(i)\lambda(\theta,i)
    \end{equation}
    with $\lambda$ as in (\ref{eq:(3.13)}).
\end{proposition}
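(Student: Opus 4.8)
The plan is to run a standard verification argument using the function $U(\cdot,i;\theta)$ from \eqref{eq:(3.11)} together with $\lambda(\theta,i)$ from \eqref{eq:(3.13)}, and then to combine the resulting two-sided estimate with the stationarity of the Markov chain to identify $\overline{\lambda}(\theta)$. First I would fix $(x,i)\in\mathcal{O}$ and an arbitrary admissible $\xi\in\mathcal{A}_e$. Applying It\^o's formula for semimartingales with jumps to $U(X^{\xi}_t,Y_t;\theta)$ over $[0,T]$, and separating the continuous and jump parts of the Markov chain, one gets
\[
U(X^{\xi}_T,Y_T;\theta)=U(x,i;\theta)+\int_0^T \mathcal{L}_{(X,Y)}U(X^{\xi}_t,Y_t;\theta)\,dt+\int_0^T U_x(X^{\xi}_t,Y_t;\theta)\,d\xi^c_t+\sum_{0\le t\le T}\Delta U + M_T,
\]
where $\xi^c$ is the continuous part of $\xi$, the jump sum accounts for the jumps of $\xi$, and $M$ is a (local) martingale built from the Brownian integral and the compensated jump martingale of $Y$. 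Using the localizing sequence $\{\beta_n\}$ (regularity of $(X^0,Y)$, guaranteed by Assumption~\ref{eq:Ass 2.1}) together with the admissibility conditions in \eqref{admissible controls}, I would argue $\mathbb{E}_{(x,i)}[M_{T\wedge\beta_n}]=0$ and pass to the limit.

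Next I would plug in the variational inequalities \eqref{eq:(3.12)}. The second line of \eqref{eq:(3.12)} gives $\mathcal{L}_{(X,Y)}U+\pi\le\lambda(\theta,\cdot)$ everywhere, with equality on $(\alpha_i(\theta),\beta_i(\theta))$; the bounds $k_2\le U_x\le k_1$ combined with $d\xi^+_t\ge 0$, $d\xi^-_t\ge 0$ give that $U_x\,d\xi^c$ and the jump terms are dominated by $-k_1\,d\xi^{+}+k_2\,d\xi^{-}$ (the sign of the proportional costs in \eqref{eq:(2.6)}). This yields, after rearranging,
\[
\frac{1}{T}\mathbb{E}_{(x,i)}\Big[\int_0^T\pi(X^{\xi}_t,\theta)\,dt-k_1\xi^+_T+k_2\xi^-_T\Big]\le \frac{1}{T}\mathbb{E}_{(x,i)}\Big[\int_0^T\lambda(\theta,Y_t)\,dt\Big]+\frac{1}{T}\big(U(x,i;\theta)-\mathbb{E}_{(x,i)}[U(X^{\xi}_T,Y_T;\theta)]\big).
\]
Since $U$ has at most linear growth in $x$ (it is the integral of the bounded-between-$k_2$-and-$k_1$ function $v$, so $U_x$ is bounded), the condition $\limsup_{T}\tfrac{1}{T}\mathbb{E}[|X^{\xi}_T|]=0$ in \eqref{admissible controls} kills the last term in the $\limsup$, and the ergodic theorem for the Markov chain $Y$ (irreducibility, stationary distribution $p$) gives $\tfrac{1}{T}\mathbb{E}_{(x,i)}[\int_0^T\lambda(\theta,Y_t)\,dt]\to\sum_{i}p(i)\lambda(\theta,i)$. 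Hence $J(x,i;\xi,\theta)\le\sum_i p(i)\lambda(\theta,i)$ for every $\xi\in\mathcal{A}_e$, i.e.\ $\overline{\lambda}(\theta)\le\sum_i p(i)\lambda(\theta,i)$.

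For the reverse inequality and optimality, I would take $\xi=\xi^{*}(\theta)$, the solution of $\mathbf{SP}(\alpha(\theta),\beta(\theta),x,i)$ from Theorem~\ref{Thm 3.2} (which lies in $\mathcal{A}_e$). By construction $X^{\xi^{*}}_t\in[\alpha_{Y_t}(\theta),\beta_{Y_t}(\theta)]$, so the first line of \eqref{eq:(3.12)} holds along the trajectory and the inequality above becomes an equality: on $\{X^{\xi^*}_t=\alpha_{Y_t}(\theta)\}$ one has $U_x=k_1$ (so the upward-reflection term contributes exactly $-k_1\,d\xi^{*,+}$), and on $\{X^{\xi^*}_t=\beta_{Y_t}(\theta)\}$ one has $U_x=k_2$ (so the downward term contributes exactly $k_2\,d\xi^{*,-}$), while the flat-boundary conditions $v_x=0$ on the stopping regions ensure the jump terms also match. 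Combined with the same ergodic-limit argument this gives $J(x,i;\xi^{*}(\theta),\theta)=\sum_i p(i)\lambda(\theta,i)$, proving both optimality of $\xi^{*}(\theta)$ and the stated formula for $\overline{\lambda}(\theta)$. The main obstacle I anticipate is the careful handling of the It\^o formula with simultaneous jumps of $\xi$ and of $Y$ and the justification that the local-martingale part has zero expectation in the limit; this requires combining the regularity of $(X^0,Y)$ with the integrability built into $\mathcal{A}_e$, and one has to be slightly careful that the flat-boundary conditions in \eqref{eq:(3.10)}–\eqref{eq:(3.12)} make the jump contributions of $\xi^{*}$ telescope correctly rather than only being bounded.
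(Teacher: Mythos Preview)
Your proposal is correct and follows essentially the same verification route as the paper: apply It\^o--Meyer to $U(X^{\xi}_t,Y_t;\theta)$, use the variational system \eqref{eq:(3.12)} together with $k_2\le U_x\le k_1$ to bound the control terms, exploit the linear growth of $U$ and the admissibility class $\mathcal{A}_e$ to kill the boundary term, and invoke the ergodicity of $Y$; then repeat with $\xi^{*}(\theta)$ where all inequalities become equalities. Two small corrections: the localizing sequence should be built from the \emph{controlled} process (the paper uses $\eta_n:=\inf\{t:|X^{\xi}_t|>n\}$, not the $\beta_n$ attached to $X^0$), and the equality for the jump part of $\xi^{*}$ comes from $U_x=v=k_1$ (resp.\ $k_2$) on the stopping regions rather than from $v_x=0$.
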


\begin{proof}
     Let $T>0$ and $(x,i)\in\mathcal{O}$. Recall that $U$ as in (\ref{eq:(3.11)}) is such that $U(\cdot,i)\in C^{2}(\mathcal{I}),\; i\in\mathbb{Y}$, and $(U,\lambda)$ solves (\ref{eq:(3.12)}), with $\lambda$ as in (\ref{eq:(3.13)}). Fixing $\xi\in\mathcal{A}_{e}$, we introduce the sequence of $\mathbb{F}$-stopping times $\{\eta_{n}\}_{n\in\mathbb{N}}$ such that $\eta_{n}:=\inf\{t\geq0:\big|X^{\xi}_{t}\big|>n\}$. Applying It\^o-Meyer's formula (in the sense of \cite{Bjork}) to $(U(X_{T\wedge \eta_{n}},Y_{T\wedge \eta_{n}};\theta))_{T\geq 0}$, we have:
     \begin{equation}
     \label{eq:(3.21)}
        U(X_{T\wedge \eta_{n}}^{\xi},Y_{T\wedge \eta_{n}};\theta)=U(x,i;\theta)+\int_{0}^{T\wedge \eta_{n}}\mathcal{L}_{(X,Y)}U(X_{s}^{\xi},Y_{s};\theta)ds
     \end{equation}
     \begin{equation*}
        +\int_{0}^{T\wedge \eta_{n}}U_{x}(X^{\xi}_{s},Y_{s};\theta)\sigma(X^{\xi}_{s},Y_{s})dW_{s}+\int_{0}^{T\wedge \eta_{n}}U_{x}(X_{s}^{\xi},Y_{s};\theta)
        (d\xi^{+,c}_{s}-d\xi^{-,c}_{s})
     \end{equation*}
     \begin{equation*}
         +\sum_{0\leq s\leq T\wedge \eta_{n},s\in \Lambda^{\xi}}\big(U(X_{s^{+}}^{\xi},Y_{s};\theta)-U(X_{s}^{\xi},Y_{s};\theta)\big)+M_{T\wedge \eta_{n}}-M_{0}
     \end{equation*}
     where $\Lambda^{\xi}:=\{t\geq 0:\xi_{t+}\neq \xi_{t}\}$, $\xi^{\pm,c}$ denotes the continuous part of $\xi^{\pm}$, and
     \begin{equation}
         M_{t}:=\int_{0}^{t}\int_{\mathbb{R}_{+}}\big(U(X_{s}^{\xi},h(Y_{s},z)+Y_{0};\theta)-U(X_{s}^{\xi},Y_{s};\theta)\big)\nu(ds,dz)
     \end{equation}
     is an $\mathbb{F}$-local martingale, where $\nu(dt,dz):=\wp(dt,dz)-dt\otimes m(dz)$. In particular, following the construction in \cite{yin2009hybrid}, $\wp(dt,dz)$ denotes the Poisson random measure with intensity $dt\otimes m(dz)$ for Lebesgue measure $m$, and the function $h:\mathbb{Y}\times \mathbb{R}_{+}$ has the form
     $
            h(i,z)=\sum_{j=1}^{d}(j-i)\boldsymbol{1}_{\{z\in\Delta_{i,j}\}}, 
     $
     with $\Delta_{i,j}$ being the partition of the $\mathbb{R}_{+}$ where each interval has length $q_{ij}$. We then define the $\mathbb{F}$-local martingale $\{\widetilde{M}_{t}\}_{t\geq0}$ as
     \begin{equation}
         \widetilde{M}_{t}:=M_{t}+\int_{0}^{t}U_{x}(X^{\xi}_{s},Y_{s};\theta)\sigma(X^{\xi}_{s},Y_{s})dW_{s}
     \end{equation}
     and notice that $\mathbb{E}_{(x,i)}\big[ \widetilde{M}_{T\wedge \eta_{n}} \big]=0,\forall T\geq 0,n\in\mathbb{N}$. Then, taking expectations in (\ref{eq:(3.21)}), we obtain
     \begin{align*}
       \mathbb{E}_{(x,i)}\big[ U(X_{T\wedge \eta_{n}}^{\xi},Y_{T\wedge \eta_{n}};\theta) \big]=
       & U(x,i;\theta)+\mathbb{E}_{(x,i)}\bigg[ \int_{0}^{T\wedge \eta_{n}}\mathcal{L}_{(X,Y)}U(X_{s}^{\xi},Y_{s};\theta)ds \bigg]   \\
        &+\mathbb{E}_{(x,i)}\bigg[\int_{0}^{T\wedge \eta_{n}}U_{x}(X_{s}^{\xi},Y_{s};\theta)
        (d\xi^{+,c}_{s}-d\xi^{-,c}_{s})\bigg] \\
        & +\mathbb{E}_{(x,i)}\bigg[\sum_{0\leq s\leq T\wedge \eta_{n},s\in \Lambda^{\xi}}\big(U(X_{s^{+}}^{\xi},Y_{s};\theta)-U(X_{s}^{\xi},Y_{s};\theta)\big)\bigg].
     \end{align*}
     Employing the second display inequality in (\ref{eq:(3.12)}), we find
     \begin{align}
        \label{eq:(3.24)}
        \mathbb{E}_{(x,i)}\big[ U(X_{T\wedge \eta_{n}}^{\xi},Y_{T\wedge \eta_{n}};\theta) \big]\leq
        & U(x,i;\theta)+\mathbb{E}_{(x,i)}\bigg[ \int_{0}^{T\wedge \eta_{n}}\big(\lambda(\theta,Y_{s})-\pi(X_{s}^{\xi},\theta)\big)ds \bigg] \\ \notag
        & +\mathbb{E}_{(x,i)}\bigg[\int_{0}^{T\wedge \eta_{n}}U_{x}(X_{s}^{\xi},Y_{s};\theta)
        (d\xi^{+,c}_{s}-d\xi^{-,c}_{s})\bigg] \\ \notag
        & +\mathbb{E}_{(x,i)}\bigg[\sum_{0\leq s\leq T\wedge \eta_{n},s\in \Lambda^{\xi}}\big(U(X_{s+}^{\xi},Y_{s};\theta)-U(X_{s}^{\xi},Y_{s};\theta)\big)\bigg]. \notag
     \end{align}
     Noticing that
     \begin{align*}
     U(X^{\xi}_{s^{+}},Y_{s};\theta)-U(X^{\xi}_{s},Y_{s};\theta)=
     &\boldsymbol{1}_{\{\Delta \xi^{+}_{s}>0\}}\int_{0}^{\Delta \xi^{+}_{s}}U_{x}(X^{\xi}_{s}+z,Y_{s};\theta)dz \\
     & -\boldsymbol{1}_{\{\Delta \xi^{-}_{s}>0\}}\int_{0}^{\Delta \xi^{-}_{s}}U_{x}(X^{\xi}_{s}-z,Y_{s};\theta)dz,
     \end{align*}
    using that $k_2 \leq U_x \leq k_1$ (cf.\ fourth and sixth conditions in (\ref{eq:(3.12)})) in the latter display equation, and then using the obtained inequality in \eqref{eq:(3.24)}, we obtain
     \begin{align}
        \label{eq:(3.25)}
        \mathbb{E}_{(x,i)}\big[ U(X_{T\wedge \eta_{n}}^{\xi},Y_{T\wedge \eta_{n}};\theta) \big]\leq
        & U(x,i;\theta)+\mathbb{E}_{(x,i)}\bigg[ \int_{0}^{T\wedge \eta_{n}}\big(\lambda(\theta,Y_{s})-\pi(X_{s}^{\xi},\theta)\big)ds \bigg] \\ \notag
        & +\mathbb{E}_{(x,i)}\bigg[ \int_{0}^{T\wedge \eta_{n}}(k_{1}d\xi^{+}_{t}-k_{2}d\xi^{-}_{t})\bigg]. \notag
     \end{align}
     
     If now the dominated convergence theorem can be applied (we will verify this later), taking limits as $n\uparrow\infty$ in (\ref{eq:(3.25)}) we find
     \begin{equation}
            \label{eq:(3.26)}
            \mathbb{E}_{(x,i)}\big[ U(X_{T}^{\xi},Y_{T};\theta) \big]\leq U(x,i;\theta)+\mathbb{E}_{(x,i)}\bigg[ \int_{0}^{T}\bigg(\lambda(\theta,Y_{s})-\pi(X_{s}^{\xi},\theta)\bigg)ds +k_{1}\xi^{+}_{T}-k_{2}\xi^{-}_{T} \bigg].
     \end{equation}

     Rearranging the terms, dividing by $T$, and sending $T\uparrow \infty$, we conclude that
     \begin{equation}
        \label{eq:(3.27)}
        \limsup_{T\uparrow\infty}\bigg(\frac{1}{T}\mathbb{E}_{(x,i)}\big[ U(X^{\xi}_{T},Y_{T};\theta)\big]+\frac{1}{T}\mathbb{E}_{(x,i)}\bigg[ \int_{0}^{T}\pi(X^{\xi}_{t},\theta)dt-k_{1}\xi^{+}_{T}+k_{2}\xi^{-}_{T}\bigg] \bigg)
     \end{equation}
     \begin{equation*}
         \leq \limsup_{T\uparrow\infty}\bigg(\frac{1}{T}U(x,i;\theta)+\frac{1}{T}\mathbb{E}_{(x,i)}\bigg[ \int_{0}^{T}\lambda(\theta,Y_{t})dt \bigg]\bigg),
     \end{equation*}
     which implies 
    \begin{equation}
        \label{eq:(3.28)}
        \limsup_{T\uparrow \infty}\bigg(\frac{1}{T} \mathbb{E}_{(x,i)}\big[ U(X_{T}^{\xi},Y_{T};\theta) \big]+\frac{1}{T}\mathbb{E}_{(x,i)}\bigg[ \int_{0}^{T}\pi(X_{s}^{\xi},\theta)ds-k_{1}\xi^{+}_{T}+k_{2}\xi^{-}_{T} \bigg]\bigg)
    \end{equation}
    \begin{equation*}
        \leq \limsup_{T\uparrow \infty}\frac{1}{T} U(x,i;\theta)+\limsup_{T\uparrow \infty}\frac{1}{T}\mathbb{E}_{(x,i)}\bigg[ \int_{0}^{T}\lambda(\theta,Y_{s})ds\bigg].
    \end{equation*}
    Hence, using the inequality $\liminf_{n\to\infty}(f_{n})+\limsup_{n\to\infty}(g_{n})\leq \limsup_{n\to\infty}(f_{n}+g_{n})$,
     on the left-hand side of (\ref{eq:(3.28)}), we find that
     \begin{equation}\label{eq:(3.29)}
     \begin{aligned}
         \liminf_{T\uparrow \infty}\frac{1}{T} & \mathbb{E}_{(x,i)}\big[ U(X_{T}^{\xi},Y_{T};\theta) \big]+\limsup_{T\uparrow\infty}\frac{1}{T}\mathbb{E}_{(x,i)}\bigg[ \int_{0}^{T}\pi(X_{s}^{\xi},\theta)ds-k_{1}\xi^{+}_{T}+k_{2}\xi^{-}_{T} \bigg] \\
        & \leq \limsup_{T\uparrow \infty}\frac{1}{T} U(x,i;\theta)+\limsup_{T\uparrow \infty}\frac{1}{T}\mathbb{E}_{(x,i)}\bigg[ \int_{0}^{T}\lambda(\theta,Y_{s})ds\bigg].
         \end{aligned}
     \end{equation}
    Since $U_{x}\in [k_{2},k_{1}]$ by (\ref{eq:(3.12)}), we have that there exists $C>0$ such that for every $(x,i,\theta)\in\mathcal{O}\times \mathbb{R}_{+}$ one has
    \begin{equation}
        \label{eq:(3.30)}
        \big|U(x,i,\theta)\big|\leq C(1+|x|). 
    \end{equation}
    This fact gives
    \begin{equation}
        \label{eq:(3.31)}
        \lim_{T\uparrow\infty}\frac{1}{T}U(x,i;\theta)=0,
    \end{equation}
    and, thanks to the fact that $\xi\in\mathcal{A}_{e}$ (cf.\ (\ref{admissible controls})), we also find
    \begin{align} 
    \label{eq:(3.32)}
        \liminf_{T\uparrow \infty}\frac{1}{T} \mathbb{E}_{(x,i)}\big[ U(X_{T}^{\xi},Y_{T};\theta) \big] & \geq \liminf_{T\uparrow\infty}\frac{1}{T}\mathbb{E}_{(x,i)}[-C(1+|X^{\xi}_{T}|)] \\ \notag
        &=-C\limsup_{T\uparrow\infty}\frac{1}{T}\mathbb{E}_{(x,i)}\big[ |X^{\xi}_{T}| \big]=0, \notag
    \end{align}
    as well as,
    \begin{align}
    \label{eq:(3.32-1)}
        \limsup_{T\uparrow \infty}\frac{1}{T} \mathbb{E}_{(x,i)}\big[ U(X_{T}^{\xi},Y_{T};\theta) \big]& \leq C\limsup_{T\uparrow\infty}\frac{1}{T}\mathbb{E}_{(x,i)}[C(1+|X^{\xi}_{T}|)]\\ \notag
        &\leq \limsup_{T\uparrow\infty}\frac{1}{T}\mathbb{E}_{(x,i)}[|X^{\xi}_{T}|]=0. \notag
    \end{align}
   Hence, using (\ref{eq:(3.31)}) and (\ref{eq:(3.32)}) in (\ref{eq:(3.29)}), we obtain
    \begin{align}
         \label{eq:(3.33)}
        & \limsup_{T\uparrow\infty}\frac{1}{T}\mathbb{E}_{(x,i)}\bigg[ \int_{0}^{T}\pi(X_{s}^{\xi},\theta)ds-k_{1}\xi^{+}_{T}+k_{2}\xi^{-}_{T} \bigg] \\ \notag 
        & \quad \leq    
         \limsup_{T\uparrow \infty}\frac{1}{T}\mathbb{E}_{(x,i)}\bigg[ \int_{0}^{T}\lambda(\theta,Y_{s})ds\bigg]=\sum_{i\in\mathbb{Y}}\lambda(\theta,i)p(i). \notag
    \end{align}
    Here, the last equality is due to the ergodicity of $Y$
    (see, for instance, Corollary 25.9 in 
    \cite{kallenberg2002foundations}). Since (\ref{eq:(3.33)}) holds for every $\xi\in\mathcal{A}_{e}$, we get 
    \begin{equation}
        \label{eq:(3.34)}
        \overline{\lambda}(\theta)=\sup_{\xi\in\mathcal{A}_{e}}J(x,i;\xi,\theta)\leq \sum_{i\in\mathbb{Y}}p(i)\lambda(\theta,i).
    \end{equation}
    Letting now $\xi^{*}(\theta)$ be the solution to $\mathbf{SP}(\alpha(
    \theta),\beta(\theta),x,i)$, we notice that in (\ref{eq:(3.25)}), and thus in (\ref{eq:(3.27)}), the inequality becomes an equality. Then, using that $\limsup_{n\uparrow\infty}(f_{n}+g_{n})\leq \limsup_{n\uparrow\infty}(f_{n})+\limsup_{n\uparrow\infty}(g_{n})$ on the left-hand side of (\ref{eq:(3.27)}) (with equality) and $\liminf_{n\uparrow\infty}(f_{n})+\limsup_{n\uparrow\infty}(g_{n})\leq \limsup_{n\uparrow\infty}(f_{n}+g_{n})$ on the right-hand side of (\ref{eq:(3.27)}) (with equality) we obtain
    \begin{equation}
        \label{eq:(3.35)}
        \begin{aligned}
        & \limsup_{T\uparrow\infty}\frac{1}{T}\mathbb{E}_{(x,i)}\bigg[ \int_{0}^{T}\pi(X_{s}^{\xi^{*}(\theta)},\theta)ds-k_{1}\xi^{*,+}_{T}(\theta)+k_{2}\xi^{*,-}_{T}(\theta) \bigg]\\
        & + \limsup_{T\uparrow\infty}\frac{1}{T}\mathbb{E}_{(x,i)}\big[ U(X_{T}^{\xi^{*}(\theta)},Y_{T};\theta) \big]\\
        & \geq \liminf_{T\uparrow\infty}\frac{1}{T}U(x,i;\theta)+\limsup_{T\uparrow \infty}\frac{1}{T}\mathbb{E}_{(x,i)}\bigg[ \int_{0}^{T}\lambda(\theta,Y_{s})ds\bigg].
        \end{aligned}
    \end{equation}
    By means of (\ref{eq:(3.31)}), (\ref{eq:(3.32-1)}) and the fact that $\xi^{*}(\theta)\in\mathcal{A}_{e}$ we thus conclude from (\ref{eq:(3.35)})
    $$
    \begin{aligned}
        &\limsup_{T\uparrow\infty}\frac{1}{T}\mathbb{E}_{(x,i)}\bigg[ \int_{0}^{T}\pi(X_{s}^{\xi^{*}(\theta)},\theta)ds-k_{1}\xi^{*,+}_{T}(\theta)+k_{2}\xi^{*,-}_{T}(\theta) \bigg] \\
        &\geq \limsup_{T\uparrow \infty}\frac{1}{T}\mathbb{E}_{(x,i)}\bigg[ \int_{0}^{T}\lambda(\theta,Y_{s})ds\bigg].
    \end{aligned}
    $$
    
    This, together with (\ref{eq:(3.34)}), provides the claimed optimality of $\xi^{*}(\theta)$ and that
    \begin{equation}
        \overline{\lambda}(\theta)=\sum_{i\in\mathbb{Y}}\lambda(\theta,i)p(i).
    \end{equation}
    In order to complete the proof, it remains to show that expectations and limits as $n\uparrow\infty$ can be interchanged in (\ref{eq:(3.25)}). By writing (\ref{eq:(3.25)}) as 
    \[
        \mathbb{E}_{(x,i)}\big[ U(X^{\xi}_{T\wedge \eta_{n}},Y_{T\wedge \eta_{n}};\theta) \big]+\mathbb{E}_{(x,i)}\bigg[ \int_{0}^{T\wedge \eta_{n}}\big(\pi(X^{\xi}_{s},\theta(Y_{s}))ds+k_{2}d\xi^{-}_{t}\big)\bigg]
    \]
    \[
        \leq U(x,i;\theta)+ \mathbb{E}_{(x,i)}\bigg[\int_{0}^{T\wedge \eta_{n}}\big(\lambda(\theta,Y_{s})ds+\kappa_{1}d\xi^{+}_{s}\big)\bigg],
    \]
    it is easy to see that the limit as $n\uparrow\infty$ can be interchanged with the expectations of the integral terms by the monotone convergence theorem. We are thus left to show that
    \begin{equation}
        \label{eq:(3.39)}
        \lim_{n\uparrow\infty}\mathbb{E}_{(x,i)}\big[ U(X^{\xi}_{T\wedge \eta_{n}},Y_{T\wedge \eta_{n}};\theta) \big]=\mathbb{E}_{(x,i)}\big[ U(X^{\xi}_{T},Y_{T};\theta) \big].
    \end{equation}
    By (\ref{eq:(3.30)}), $|U(X^{\xi}_{T\wedge \eta_{n}},Y_{T\wedge \eta_{n}};\theta)|\leq C(1+|X^{\xi}_{T\wedge\eta_{n}}|)\leq C(1+\sup_{t\leq T}|X^{\xi}_{t}|)$.
    Hence, by standard estimates based on the Lipschitz property of $b(\cdot,i)$ and $\sigma(\cdot,i)$ and the fact that $\xi\in\mathcal{A}_{e}$, we conclude that $\mathbb{E}_{(x,i)}\big[\sup_{t\leq T}|X^{\xi}_{t}|\big]<\infty$, thus allowing the application of the dominated convergence theorem in order to derive (\ref{eq:(3.39)}).
\end{proof}

\section{Mean-Field Equilibrium}
\label{Mean field analysis}

In the following, we prove the existence and uniqueness of the mean-field equilibrium (cf.\ Definition \ref{eq: Def 1}) by an application of Schauder-Tychonof fixed-point theorem. Let $
\mathcal{P}(\mathcal{O})$ be the space of probability measures on $\mathcal{O}$, endowed with the weak topology.

\subsection{Continuity of the free-boundaries w.r.t.\ the mean-field term}
In this subsection we establish some regularity of the free-boundaries with respect to the variable $\theta$. For the subsequent analysis we introduce the following assumptions.
\begin{assumption}
    \label{Ass 4.1}
    The following hold.
    \begin{enumerate}
        \item \label{Ass 4.1-1} There exists $\kappa:\mathcal{I} \to\mathbb{R}_{+}$ such that
        \[
            \lim_{\theta\downarrow 0}\pi_{x}(x,\theta)=\infty,\quad \lim_{\theta\uparrow \infty}\pi_{x}(x,\theta)=\kappa(x),\;\text{ for any }x\in\mathcal{I}
        \]
        and, for some $\widehat{x}_{-}:=(\widehat{x}_{-}(1),...,\widehat{x}_{-}(d))$ and $\widehat{x}_{+}:=(\widehat{x}_{+}(1),...,\widehat{x}_{+}(d))$, we have 
        \[        
            \kappa(x)+k_{1}b_{x}(x,i)
            \begin{cases}
                                            >0, \quad x<\widehat{x}_{-}(i), \\
                                                        =0,\quad x=\widehat{x}_{-}(i), \\
                                                        <0,\quad x<\widehat{x}_{-}(i),
            \end{cases}
            \quad \kappa(x)+k_{2}b_{x}(x,i)
            \begin{cases}
                                                        >0,\quad x<\widehat{x}_{+}(i), \\
                                                        =0,\quad x=\widehat{x}_{+}(i), \\
                                                        <0,\quad x<\widehat{x}_{+}(i).
            \end{cases}
        \]
       \item \label{Ass 4.1-2} It holds that $b_{xx}(x,i) \leq 0$  for any $(x,i)\in\mathcal{O}$, and, for any compact  $\mathcal X \times \Theta  \subset  \mathcal I \times \mathbb R _+ $, there exists a constant $C_{(\mathcal X, \Theta)} > 0$ such that
       $$
            \widehat{\mathbb{E}}_{(x,i)}\bigg[ \int_{0}^{\infty}e^{-ct}\big| \pi_{x\theta}(\widehat{X}_{t},\theta) \big|dt\bigg] < C_{(\mathcal X, \Theta)} <\infty, 
        $$
        for any $(x,i, \theta)\in\mathcal{X} \times \mathbb Y \times \Theta$, with $c$ as in Assumption \ref{eq:Ass 2.1}.
       \item For any $i\in\mathbb{Y}$ we have
        \label{Ass 4.1-3}
        $
            \sum_{j\in\mathbb{Y}}q_{ji}\leq 0.
        $
\end{enumerate}
\end{assumption}
\begin{remark}
    As an example, as in Remark \ref{remark: benchmark example} consider a profit function
        $\pi(x,\theta)=x^{\beta}(\theta^{-(1+\beta)}+\kappa_*),$ for $  \kappa_*>0, \beta\in (0,1)$, and dynamics
     $b(x,i)=-\delta_{i}x$ and $\sigma(x,i)=\sigma_{i}x$, for constants $\delta_{i}, \sigma_{i}>0$  for any $i\in\mathbb{Y}$. 
     By choosing $\kappa(x)=\kappa_* \beta x^{\beta  -1 }$, both Assumptions \ref{eq:Ass 2.2} and \ref{Ass 4.1} are satisfied
     (see also Remark \ref{remark: benchmark example}).
     Notice that the case $\kappa_*=0$ fulfills Assumption \ref{eq:Ass 3.1}, but it does not satisfy Condition \ref{Ass 4.1-1} in Assumption \ref{Ass 4.1}.
\end{remark}

\begin{lemma}
    \label{Lemma 4.1}
    For any $i\in\mathbb{Y}$, the maps $\theta\mapsto \alpha_{i}(\theta)$ and $\theta\mapsto \beta_{i}(\theta)$ are nonincreasing.
\end{lemma}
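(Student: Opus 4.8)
The plan is to exploit the variational characterization of the free boundaries via the Dynkin game value $v(x,i;\theta)$ together with the monotonicity of $\theta \mapsto v(x,i;\theta)$ established in Proposition \ref{eq:Prop 3.1}-(2). Recall from \eqref{eq:(3.7)} that $\alpha_i(\theta) = \sup\{x \in \mathcal{I} : v(x,i;\theta) \geq k_1\}$ and $\beta_i(\theta) = \inf\{x \in \mathcal{I} : v(x,i;\theta) \leq k_2\}$. Since $v(x,i;\theta)$ is decreasing in $x$ (Proposition \ref{eq:Prop 3.1}-(1)), the superlevel set $\{x : v(x,i;\theta) \geq k_1\}$ is an interval of the form $(\underline{x}, \alpha_i(\theta)]$, and the sublevel set $\{x : v(x,i;\theta) \leq k_2\}$ is of the form $[\beta_i(\theta), \overline{x})$. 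The first step is therefore to observe that if $\theta_1 \leq \theta_2$, then by Proposition \ref{eq:Prop 3.1}-(2) we have $v(x,i;\theta_2) \leq v(x,i;\theta_1)$ for every $x \in \mathcal{I}$; hence $\{x : v(x,i;\theta_2) \geq k_1\} \subseteq \{x : v(x,i;\theta_1) \geq k_1\}$, which immediately gives $\alpha_i(\theta_2) \leq \alpha_i(\theta_1)$. Similarly, $\{x : v(x,i;\theta_2) \leq k_2\} \supseteq \{x : v(x,i;\theta_1) \leq k_2\}$ yields $\beta_i(\theta_2) \leq \beta_i(\theta_1)$.

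To make this rigorous I would first re-derive the monotonicity $\theta \mapsto v(x,i;\theta)$ carefully. By Assumption \ref{eq:Ass 2.2}-(\ref{eq:2.2-3}) we have $\pi_{x\theta}(x,\theta) < 0$, so $\theta \mapsto \pi_x(x,\theta)$ is strictly decreasing; consequently, the running payoff $\int_0^{\tau\wedge\sigma} e^{\int_0^t b_x(\widehat{X}_s,Y_s)\,ds}\pi_x(\widehat{X}_t,\theta)\,dt$ in $\widehat{J}(x,i;\tau,\sigma,\theta)$ is, pathwise, decreasing in $\theta$ (the discount factor $e^{\int_0^t b_x\,ds}$ is nonnegative and $\theta$-independent, since $b$ does not depend on $\theta$), while the two obstacle terms $k_1 e^{\int_0^\tau b_x\,dt}\mathbf{1}_{\{\tau<\sigma\}}$ and $k_2 e^{\int_0^\sigma b_x\,dt}\mathbf{1}_{\{\sigma<\tau\}}$ do not depend on $\theta$ at all. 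Hence for fixed $\tau,\sigma$ the map $\theta \mapsto \widehat{J}(x,i;\tau,\sigma,\theta)$ is nonincreasing, and taking $\inf_\tau\sup_\sigma$ preserves monotonicity, giving that $\theta \mapsto v(x,i;\theta)$ is nonincreasing as claimed.

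The final step is the set-inclusion argument combined with the conventions $\sup\emptyset = \underline{x}$, $\inf\emptyset = \overline{x}$. For $\theta_1 \leq \theta_2$ and any $x$ with $v(x,i;\theta_2) \geq k_1$, the monotonicity in $\theta$ gives $v(x,i;\theta_1) \geq v(x,i;\theta_2) \geq k_1$, so $x$ belongs to the superlevel set at $\theta_1$; taking suprema over these sets gives $\alpha_i(\theta_2) \leq \alpha_i(\theta_1)$ (and the convention handles the case where the set at $\theta_2$ is empty, since then $\alpha_i(\theta_2) = \underline{x} \leq \alpha_i(\theta_1)$ trivially). The argument for $\beta_i$ is symmetric: any $x$ with $v(x,i;\theta_1) \leq k_2$ satisfies $v(x,i;\theta_2) \leq v(x,i;\theta_1) \leq k_2$, so the sublevel set at $\theta_1$ is contained in that at $\theta_2$, and taking infima yields $\beta_i(\theta_2) \leq \beta_i(\theta_1)$. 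I do not expect a genuine obstacle here; the only point requiring care is ensuring that the $\theta$-monotonicity of the pathwise payoff survives the min-max, but since $\inf$ and $\sup$ are both monotone operations on the pointwise-ordered family of functionals, this is immediate. Note that Assumption \ref{Ass 4.1} is not actually needed for this lemma — it will be used for the finer continuity and stability statements that come later.
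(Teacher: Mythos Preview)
Your proposal is correct and follows essentially the same approach as the paper: invoke Proposition~\ref{eq:Prop 3.1}-(2) to obtain the set inclusion $\{x:v(x,i;\theta_2)\geq k_1\}\subseteq\{x:v(x,i;\theta_1)\geq k_1\}$ for $\theta_1\leq\theta_2$, then take suprema (and argue symmetrically for $\beta_i$). Your additional re-derivation of the $\theta$-monotonicity of $v$ and the remark that Assumption~\ref{Ass 4.1} is not needed here are correct but not strictly required, since the paper simply cites Proposition~\ref{eq:Prop 3.1}.
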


\begin{proof}
    Take arbitrary $i\in\mathbb{Y}$ and let $\theta_{1},\theta_{2}\in\mathbb{R}_{+}$ such that $\theta_{1}\leq \theta_{2}$. Then, by Proposition \ref{eq:Prop 3.1}, we have that the map $\theta\mapsto v(x,i;\theta)$ is decreasing, for any $(x,i)\in\mathcal{O}$. Hence,
    \[
        \{x\in\mathcal{I}:v(x,i;\theta_{2})\geq k_{1}\} \subseteq \{x\in \mathcal{I}:v(x,i;\theta_{1})\geq k_{1}\},
    \]
    which, by (\ref{eq:(3.7)}), implies that
    \[
        \alpha_{i}(\theta_{2})\leq \alpha_{i}(\theta_{1}).
    \]
    The monotonicity of $\theta\mapsto \beta_{i}(\theta),\; i\in\mathbb{Y}$, can be proved similarly.
\end{proof}

\begin{lemma}
    \label{lemma signs}
    The following hold.
    \begin{enumerate}
        \item \label{Lipschitz con on theta} For any compact $\mathcal X$, the map $\theta\mapsto v(x,i;\theta)$ is locally Lipschtiz continuous, uniformly for $(x,i)\in\mathcal{X} \times \mathbb Y$.
        \item \label{Strictly neg} The function $v_{x}(x,i;\theta)$ is nonpositive for any $(x,i,\theta)\in\mathcal{O}\times\mathbb{R}_{+}$, and, for fixed $\theta\in\mathbb{R}_{+}$, it is strictly negative for any $(x,i)\in\mathcal{C}^{\theta}$, where $\mathcal{C}^{\theta}$ is the continuation region as in Theorem \ref{eq:Thm 3.1}.
    \end{enumerate}
\end{lemma}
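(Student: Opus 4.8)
The plan is to prove the two assertions separately, claim (\ref{Strictly neg}) being the substantive one.

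For the local Lipschitz continuity in (\ref{Lipschitz con on theta}) I would exploit that $v(x,i;\cdot)$ is the value of the Dynkin game (\ref{eq:(3.6)}) and that only the running term of $\widehat{J}$ in (\ref{eq:(3.4)}) depends on $\theta$. The standard Lipschitz estimate for saddle values then gives, for $\theta_1,\theta_2$ in a fixed compact $\Theta\subset\mathbb{R}_+$,
\[
|v(x,i;\theta_1)-v(x,i;\theta_2)|\le\sup_{\tau,\sigma\in\mathcal{T}}\widehat{\mathbb{E}}_{(x,i)}\Big[\int_0^{\tau\wedge\sigma}e^{\int_0^t b_x(\widehat{X}_s,Y_s)\,ds}\,\big|\pi_x(\widehat{X}_t,\theta_1)-\pi_x(\widehat{X}_t,\theta_2)\big|\,dt\Big].
\]
Bounding $e^{\int_0^t b_x\,ds}\le e^{-ct}$ via Assumption \ref{eq:Ass 2.1}-(\ref{eq:(2.1-3)}), writing $\pi_x(\cdot,\theta_1)-\pi_x(\cdot,\theta_2)=-\int_{\theta_1}^{\theta_2}\pi_{x\theta}(\cdot,\theta)\,d\theta$, and interchanging the order of integration, the right-hand side is dominated by $\int_{\theta_1\wedge\theta_2}^{\theta_1\vee\theta_2}\widehat{\mathbb{E}}_{(x,i)}\big[\int_0^\infty e^{-ct}|\pi_{x\theta}(\widehat{X}_t,\theta)|\,dt\big]\,d\theta\le C_{(\mathcal{X},\Theta)}\,|\theta_1-\theta_2|$ for every $(x,i)\in\mathcal{X}\times\mathbb{Y}$, the last bound being Assumption \ref{Ass 4.1}-(\ref{Ass 4.1-2}); since $C_{(\mathcal{X},\Theta)}$ does not depend on $(x,i)$, this is the asserted uniform local Lipschitz continuity.

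For (\ref{Strictly neg}), nonpositivity of $v_x$ is immediate from the monotonicity in $x$ (Proposition \ref{eq:Prop 3.1}) together with $v(\cdot,i;\theta)\in C^1(\mathcal{I})$ (Proposition \ref{eq:Prop 3.3}). For the strict sign on $\mathcal{C}^\theta$ I would fix $\theta$ and $i_0$ with $\alpha_{i_0}(\theta)<\beta_{i_0}(\theta)$ and argue by contradiction: assume $v_x(x_0,i_0;\theta)=0$ at some interior point $x_0\in(\alpha_{i_0}(\theta),\beta_{i_0}(\theta))$. Solving the first equation of (\ref{eq:(3.10)}) for $v_{xx}(\cdot,i_0;\theta)$ and using $b,\sigma\in C^2$, $\pi(\cdot,\theta)\in C^2$, and that the coupling term carries $v(\cdot,j;\theta)\in C^1(\mathcal{I})$ (not its derivatives), one upgrades $v(\cdot,i_0;\theta)$ to $C^3$ on that interval, so that $w:=v_x(\cdot,i_0;\theta)$ may legitimately be obtained by differentiating the equation in $x$:
\[
\frac{1}{2}\sigma^2 w_{xx}+(b+2\sigma\sigma_x)w_x+\big(2b_x+\sigma_x^2+\sigma\sigma_{xx}-\kappa_{i_0}\big)w=-b_{xx}\,v(\cdot,i_0;\theta)-\pi_{xx}(\cdot,\theta)-\sum_{j\ne i_0}q_{i_0 j}\,v_x(\cdot,j;\theta)=:g,
\]
on $(\alpha_{i_0}(\theta),\beta_{i_0}(\theta))$, with $\kappa_{i_0}=-q_{i_0 i_0}$. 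Crucially $g\ge 0$: $b_{xx}\le 0$ (Assumption \ref{Ass 4.1}-(\ref{Ass 4.1-2})) and $v(\cdot,i_0;\theta)\in(k_2,k_1)$ on $\mathcal{C}^\theta$ yield $-b_{xx}v\ge 0$; concavity of $\pi(\cdot,\theta)$ (Assumption \ref{eq:Ass 2.2}-(\ref{eq:2.2-2})) yields $-\pi_{xx}\ge 0$; and $q_{i_0 j}>0$ for $j\ne i_0$ together with $v_x(\cdot,j;\theta)\le 0$ (the nonpositivity just proved) yield $-q_{i_0 j}v_x(\cdot,j;\theta)\ge 0$. Thus $w\in C^2$ is a classical subsolution of an elliptic operator, $w\le 0$ on the closed interval, and its maximal value $0$ is attained at the interior point $x_0$; the strong maximum principle — applicable here regardless of the sign of the zeroth-order coefficient precisely because the extremal value is $0$ — then forces $w\equiv 0$ on the connected interval $(\alpha_{i_0}(\theta),\beta_{i_0}(\theta))$, i.e. $v(\cdot,i_0;\theta)$ is constant there with a value in $(k_2,k_1)$.

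It remains to rule this out. If $\alpha_{i_0}(\theta)>\underline{x}$, continuity of $v(\cdot,i_0;\theta)$ and definition (\ref{eq:(3.7)}) force $v(\alpha_{i_0}(\theta),i_0;\theta)=k_1$, contradicting a constant value strictly below $k_1$; symmetrically if $\beta_{i_0}(\theta)<\overline{x}$ one gets the value $k_2$. If instead $\alpha_{i_0}(\theta)=\underline{x}$ and $\beta_{i_0}(\theta)=\overline{x}$, then $v(\cdot,i_0;\theta)$ would be constant on all of $\mathcal{I}$; but choosing $\tau=+\infty$ in $v=\inf_\tau\sup_\sigma\widehat{J}$ and using $\pi_x\ge 0$ gives $v(x,i_0;\theta)\le\widehat{\mathbb{E}}_{(x,i_0)}\big[\int_0^\infty e^{\int_0^t b_x\,ds}\pi_x(\widehat{X}_t,\theta)\,dt\big]+k_2$, which by the second limit in Assumption \ref{eq:Ass 2.2}-(\ref{eq:2.2-4}) forces $\limsup_{x\uparrow\overline{x}}v(x,i_0;\theta)\le k_2$ — again incompatible with a constant exceeding $k_2$. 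This yields the contradiction, hence $v_x<0$ on $\mathcal{C}^\theta$. I expect the main difficulty to lie entirely in (\ref{Strictly neg}): justifying the differentiation of the coupled free-boundary system (the $C^3$ step), checking that $g\ge 0$ (which uses both the nonpositivity of $v_x$ and the structural Assumptions \ref{eq:Ass 2.2} and \ref{Ass 4.1}), and invoking the maximum principle in its zero-extremum form; everything else is routine.
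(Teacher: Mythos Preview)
Your argument is correct. For Claim (\ref{Lipschitz con on theta}) you do essentially what the paper does: bound the saddle-value increment by the running-cost increment, use $e^{\int_0^t b_x\,ds}\le e^{-ct}$, and invoke Assumption \ref{Ass 4.1}-(\ref{Ass 4.1-2}).

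For Claim (\ref{Strictly neg}) you take a genuinely different route. The paper establishes both nonpositivity and strict negativity directly from the probabilistic representation of $v_x$ in Lemma \ref{repr vx}: each summand in that formula is nonpositive (using $\pi_{xx}\le 0$, $b_{xx}\le 0$ and $\partial_x\widehat{X}^x>0$), and on $\mathcal{C}^\theta$ the running-integral term is present because $\tau^*\wedge\sigma^*>0$ there. You instead read nonpositivity off Propositions \ref{eq:Prop 3.1} and \ref{eq:Prop 3.3}, then attack strict negativity analytically: bootstrap $v(\cdot,i_0;\theta)$ to $C^3$ on $(\alpha_{i_0},\beta_{i_0})$ via the ODE in (\ref{eq:(3.10)}), differentiate it, verify the source term is nonnegative, and apply the strong maximum principle in its zero-maximum form to force $v_x(\cdot,i_0;\theta)\equiv 0$; the resulting constancy is then excluded by the boundary values (or, in the degenerate endpoint case, by Assumption \ref{eq:Ass 2.2}-(\ref{eq:2.2-4})). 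The paper's route is shorter and yields an explicit formula for $v_x$ of independent use, but leans on the derivation of Lemma \ref{repr vx}. Your route is self-contained (it needs only Propositions \ref{eq:Prop 3.1}--\ref{eq:Prop 3.3} and standard elliptic tools) and does not require the integrand in Lemma \ref{repr vx} to be \emph{strictly} negative; the price is the $C^3$ bootstrap and the maximum-principle machinery. Both approaches rest on the same structural inputs $\pi_{xx}\le 0$ and $b_{xx}\le 0$. One cosmetic point: you write $q_{i_0 j}>0$ for $j\ne i_0$, but the paper only assumes irreducibility, so $q_{i_0 j}\ge 0$; this does not affect the inequality $-q_{i_0 j}v_x(\cdot,j;\theta)\ge 0$.
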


\begin{proof}
    Take a compact $\mathcal X \times \Theta \subset \mathcal I \times \mathbb R _+$ and generic $(x,i,\theta)\in\mathcal{X} \times \mathbb Y \times \Theta$. Following the proof of Proposition \ref{eq:Prop 3.3} (cf. Appendix \ref{Proof of Proposition 3.3}), for $\epsilon>0$ small enough, by mean value theorem, we have that
    \begin{align*}
        \big|v(x,i;\theta+\epsilon)-v(x,i;\theta)\big|&\leq\widehat{\mathbb{E}}_{(x,i)}\bigg[ \int_{0}^{\infty}e^{\int_{0}^{t}b_{x}(\widehat{X}_{s},Y_{s})ds} \big|\pi_{x}(\widehat{X}_{t},\theta+\epsilon)-\pi_{x}(\widehat{X}_{t},\theta)\big|dt\bigg] \\
        &=\epsilon \, \widehat{\mathbb{E}}_{(x,i)}\bigg[ \int_{0}^{\infty}e^{\int_{0}^{t}b_{x}(\widehat{X}_{s},Y_{s})ds}\big|\pi_{x\theta}(\widehat{X}_{t},\tilde{\theta})\big|dt\bigg] \\
        &\leq \epsilon \, \widehat{\mathbb{E}}_{(x,i)}\bigg[ \int_{0}^{\infty}e^{-ct}\big|\pi_{x\theta}(\widehat{X}_{t},\tilde{\theta})\big|dt\bigg]\leq \epsilon \,  C_{(\mathcal X, \Theta^*)} ,
    \end{align*}
    for $\tilde{\theta}\in(\theta,\theta+\epsilon)$ and a compact $\Theta^*$ such that $\theta + \epsilon \in \Theta^*$ for any $\theta \in \Theta$.
    Thus, since $C_{(\mathcal X, \Theta^*)} < \infty$ by Assumption \ref{Ass 4.1}-(\ref{Ass 4.1-2}), we deduce that the map $\theta\mapsto v(x,i;\theta)$ is locally Lipschtiz continuous, proving Claim (\ref{Lipschitz con on theta}).

    We next prove Claim (\ref{Strictly neg}).
    Using the representation of $v_{x}$ in Lemma \ref{repr vx}, Assumption \ref{eq:Ass 2.2}-(\ref{eq:2.2-2}) and Assumption \ref{Ass 4.1}-(\ref{Ass 4.1-2}), one can easily check
     (by observing that $\partial _x \widehat X^{x}_{t} > 0,\;\widehat{\mathbb{P}}\otimes dt$-a.s., since $\partial _x \widehat X ^x$ is a solution to a linear equation) that the function $v_{x}$ is nonpositive for any $(x,i,\theta)\in\mathcal{O}\times \mathbb{R}_{+}$.
    For arbitrary fixed $\theta\in\mathbb{R}_{+}$ and for any $(x,i)\in\mathcal{C}^{\theta}$ we know that $\tau^{*}(x,i;\theta)\wedge\sigma^{*}(x,i;\theta)>0,\; \widehat{\mathbb{P}}\text{-a.s.}$, hence by representation of $v_{x}$ (cf. Lemma \ref{repr vx}) we conclude that $v_{x}(x,i,;\theta)<0$. 
\end{proof}

The next proposition establishes continuity of the free boundaries with respect to the parameter $\theta$, a result which will play an important role in the proof of Theorem \ref{Thm: existence MFGE} below, when applying the Schauder-Tychonoff fixed point theorem. The result is interesting per se since it proves continuity with respect to a parameter (not a state variable) of the free boundaries arising in a Dynkin game.
\begin{proposition}
    \label{Cont of free-boundaries}
    The map $\theta\mapsto (\alpha_{i}(\theta),\beta_{i}(\theta))_{i\in\mathbb{Y}}$ is continuous.
\end{proposition}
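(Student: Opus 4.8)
The plan is to prove continuity of $\theta\mapsto(\alpha_i(\theta),\beta_i(\theta))_{i\in\mathbb Y}$ by combining monotonicity (Lemma \ref{Lemma 4.1}) with the local Lipschitz continuity of $v$ in $\theta$ (Lemma \ref{lemma signs}-(\ref{Lipschitz con on theta})) and the strict negativity of $v_x$ in the continuation region (Lemma \ref{lemma signs}-(\ref{Strictly neg})). Since each $\theta\mapsto\alpha_i(\theta)$ is nonincreasing, the left and right limits $\alpha_i(\theta-)\ge\alpha_i(\theta)\ge\alpha_i(\theta+)$ exist, and it suffices to rule out a genuine jump at any $\theta_0\in\mathbb R_+$; the same argument applies to $\beta_i$. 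Fix $\theta_0$ and a compact interval $\Theta$ containing a neighbourhood of $\theta_0$, so that by Lemma \ref{lemma signs}-(\ref{Lipschitz con on theta}) there is $L>0$ with $|v(x,i;\theta)-v(x,i;\theta')|\le L|\theta-\theta'|$ for all $(x,i)$ in the relevant compact and $\theta,\theta'\in\Theta$.

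First I would treat $\alpha_i$. Suppose toward a contradiction that $\alpha_i(\theta_0-)>\alpha_i(\theta_0)$, i.e.\ there is $a:=\alpha_i(\theta_0)<b:=\alpha_i(\theta_0-)$. For every $\theta<\theta_0$ we have $\alpha_i(\theta)\ge b$, hence by \eqref{eq:(3.9)} the point $(x,i)$ with $x\in(a,b)$ lies in $\mathcal S^\theta_{\inf}$, so $v(x,i;\theta)\ge k_1$ by Theorem \ref{eq:Thm 3.1}; letting $\theta\uparrow\theta_0$ and using continuity of $v$ in $\theta$ gives $v(x,i;\theta_0)\ge k_1$ for all $x\in(a,b)$. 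On the other hand, pick any $x_0\in(a,b)$; then $x_0>a=\alpha_i(\theta_0)$ and (since $x_0<b<\beta_i(\theta_0)$, because $\alpha_i<\beta_i$ on the whole range and $\beta_i$ is nonincreasing so $\beta_i(\theta_0)\ge\beta_i(\theta_0-)\ge b$) the point $(x_0,i)\in\mathcal C^{\theta_0}$. By Lemma \ref{lemma signs}-(\ref{Strictly neg}), $v_x(\cdot,i;\theta_0)<0$ on $\mathcal C^{\theta_0}$, and since $v(\alpha_i(\theta_0)+, i;\theta_0)=k_1$ (continuity of $v$ in $x$, Proposition \ref{eq:Prop 3.3}, plus $v=k_1$ on the stopping region), strict monotonicity forces $v(x_0,i;\theta_0)<k_1$, contradicting $v(x_0,i;\theta_0)\ge k_1$. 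This rules out a left jump. The symmetric argument — using that for $\theta>\theta_0$ the interval $(\alpha_i(\theta_0+),\alpha_i(\theta_0))$ sits in the \emph{continuation} region of $v(\cdot,i;\theta)$ where $v<k_1$, passing to the limit $\theta\downarrow\theta_0$, and then invoking that on $(\alpha_i(\theta_0+),\alpha_i(\theta_0))$ one is in the stopping region $\mathcal S^{\theta_0}_{\inf}$ so $v(\cdot,i;\theta_0)=k_1$ — rules out a right jump, giving continuity of $\alpha_i$. Continuity of $\beta_i$ follows by the entirely analogous argument with $k_1,\alpha_i,\mathcal S_{\inf}$ replaced by $k_2,\beta_i,\mathcal S_{\sup}$ and the inequalities reversed.

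The key structural inputs are thus: (i) monotonicity in $\theta$ reduces the problem to excluding one-sided jumps; (ii) Lipschitz-in-$\theta$ continuity of $v$ lets one pass limits in $\theta$ through the inequalities defining the stopping regions; (iii) $C^1$-regularity of $v$ in $x$ (Proposition \ref{eq:Prop 3.3}) together with strict negativity of $v_x$ inside $\mathcal C^{\theta_0}$ yields the contradiction at $\theta_0$ itself, because a nondegenerate interval trapped between a stopping region (where $v$ equals the obstacle value) and the continuation region (where $v$ is strictly on the other side of that value) cannot exist. I expect the main obstacle to be bookkeeping the boundary cases: when $\alpha_i(\theta_0)$ or the limiting boundary hits $\underline x$ or $\overline x$ (so the ``interval'' degenerates or escapes $\mathcal I$), and ensuring the point $x_0$ genuinely lies in the \emph{open} continuation region $\mathcal C^{\theta_0}$ rather than on its boundary — this is where one needs the strict ordering $x_-(i,\theta)<\alpha_i(\theta)<\beta_i(\theta)<x_+(i,\theta)$ recorded after \eqref{eq:(3.9)} and the monotonicity of both $\alpha_i$ and $\beta_i$ to guarantee room on both sides. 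Once those degenerate configurations are dispatched (the conventions $\sup\emptyset=\underline x$, $\inf\emptyset=\overline x$ handle them cleanly since then no jump-interval can be formed), the argument closes.
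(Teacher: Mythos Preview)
Your left-continuity argument is correct, and in fact simpler than you make it: for $x_0 \in (\alpha_i(\theta_0), \alpha_i(\theta_0-))$ one has $v(x_0, i; \theta) = k_1$ for every $\theta < \theta_0$ (since $x_0 \leq \alpha_i(\theta_0-) \leq \alpha_i(\theta)$), so continuity of $v$ in $\theta$ gives $v(x_0, i; \theta_0) = k_1$, contradicting $v(x_0, i; \theta_0) < k_1$, which follows directly from the definition of $\alpha_i(\theta_0)$ as a supremum. No appeal to $v_x$ or to the position of $\beta_i(\theta_0)$ is needed. (Incidentally, your inequality $\beta_i(\theta_0) \geq \beta_i(\theta_0-)$ is backwards, since $\beta_i$ is nonincreasing; but it is irrelevant here.) This is genuinely more economical than the paper's treatment of that direction.

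The ``symmetric'' argument for right-continuity, however, has a real gap. If $\alpha_i(\theta_0+) < \alpha_i(\theta_0)$ and $x_0$ lies strictly between them, then for $\theta > \theta_0$ you only have the \emph{strict} inequality $v(x_0, i; \theta) < k_1$; passing to the limit $\theta \downarrow \theta_0$ yields merely $v(x_0, i; \theta_0) \leq k_1$, which is perfectly consistent with --- not contradictory to --- the equality $v(x_0, i; \theta_0) = k_1$ forced by $x_0 < \alpha_i(\theta_0)$. The two directions are not symmetric: in the left case you pass an equality through the limit and collide with a strict inequality at $\theta_0$; in the right case you try to carry a strict inequality through a limit, and strict inequalities do not survive limits.

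Closing this gap requires a quantitative lower bound on $k_1 - v(x_0, i; \theta)$ for $\theta > \theta_0$, and this is exactly where the paper's proof does the work you are missing. Writing
\[
k_1 - v(x_0, i; \theta^n) = v(\alpha_i(\theta^n), i; \theta^n) - v(x_0, i; \theta^n) = \int_{\alpha_i(\theta^n)}^{x_0} \big(-v_x(y, i; \theta^n)\big)\, dy,
\]
one invokes Lemma~\ref{lemma signs}-(\ref{Strictly neg}) on the continuation region for $\theta^n$ (not for $\theta_0$, where $x_0$ lies in the stopping region and $v_x = 0$) over a fixed subinterval to get a positive lower bound independent of $n$. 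On the other hand the Lipschitz bound in $\theta$ gives $k_1 - v(x_0, i; \theta^n) = v(x_0, i; \theta_0) - v(x_0, i; \theta^n) \leq L|\theta^n - \theta_0| \to 0$, producing the contradiction. The paper packages this via the identity $v(\alpha_i(\theta^n), i; \theta^n) = v(\alpha_i(\theta_0), i; \theta_0) = k_1$ and equation~\eqref{equation in theta}, but the essential ingredient absent from your sketch is the use of $v_x < 0$ at the \emph{approximating} parameters $\theta^n$.
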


\begin{proof}
    For fixed $i\in\mathbb{Y}$ we, first prove that the map $\theta\mapsto \alpha_{i}(\theta)$ is left-continuous.     
    Fix $\theta\in\mathbb{R}_{+}$, a sequence $\{\theta^{n}\}_{n\in\mathbb{N}}$, and a compact $ \Theta \subset \mathbb{R}_{+}$ such that $\theta^{n}\nearrow \theta$, and $\theta,\theta^{n}\in \Theta$ for any $n\in\mathbb{N}$. 
Since the map $\theta\mapsto\alpha_{i}(\theta)$ is nonincreasing (cf.\ Lemma \ref{Lemma 4.1}), arguing by contradiction, we assume that there exists $\delta>0$ such that $\alpha_{i}(\theta)+\delta \leq \alpha_{i}(\theta^{n})$ for any $n\in\mathbb{N}$. 
Observe that, since the function $\alpha_i$ is finite and monotone (cf.\ Lemma \ref{Lemma 4.1}), one has
\begin{equation}
   \label{eq compact X}
 \alpha_i(\theta^n) \in \Big[ \inf_{y \in \Theta} \alpha_i(y) , \sup_{y \in \Theta} \alpha_i(y) \Big]=:\mathcal{K}, \quad \text{ for any $n$},
\end{equation}
with $\mathcal{K}$ being compact.
Thanks to the fourth display equation in (\ref{eq:(3.10)}) we have that $v(\alpha_{i}(\theta),i;\theta)=v(\alpha_{i}(\theta^{n}),i;\theta^{n})=k_{1}$ for any $n\in\mathbb{N}$. Hence, using Proposition \ref{eq:Prop 3.3} and Lemma \ref{lemma signs} we can write
    \begin{align*}
        0&=v(\alpha_{i}(\theta^{n}),i;\theta^{n})-v(\alpha_{i}(\theta),i;\theta) \\
        &=\big( v(\alpha_{i}(\theta^{n}),i;\theta^{n})-v(\alpha_{i}(\theta^{n}),i;\theta)\big)+\big( v(\alpha_{i}(\theta^{n}),i;\theta)-v(\alpha_{i}(\theta),i;\theta) \big) \notag \\
        &= - \int^{\theta}_{\theta^{n}}v_{\theta}(\alpha_{i}(\theta^{n}),i;y)dy+\int_{\alpha_{i}(\theta)}^{\alpha_{i}(\theta^{n})}v_{x}(y,i;\theta)dy \notag
    \end{align*}
 or,   equivalently,
    \begin{equation}\label{equation in theta}
        \int_{\alpha_{i}(\theta)}^{\alpha_{i}(\theta^{n})}\big(-v_{x}(y,i;\theta)\big)dy= \int_{\theta^{n}}^{\theta}\big(-v_{\theta}(\alpha_{i}(\theta^{n}),i;y)\big)dy.
    \end{equation} 
    By Lemma \ref{lemma signs} we have $v_{x}(x,i;\theta)< 0$ for fixed $\theta\in \Theta$ and for any $(x,i)\in \mathcal{C}^{\theta}$, then for $\epsilon\in (0,\delta)$ small enough we have that
    \begin{align}
        \label{contr left-cont}
        \int_{\alpha_{i}(\theta)}^{\alpha_{i}(\theta^{n})}\big(-v_{x}(y,i;\theta)\big)dy & \geq \int_{\alpha_{i}(\theta)+\epsilon}^{\alpha_{i}(\theta)+\delta}\big(-v_{x}(y,i;\theta)\big)dy \\
        &\geq \min_{y\in [\alpha_{i}(\theta)+\epsilon,\alpha_{i}(\theta)+\delta]}|v_{x}(y,i;\theta)|(\delta-\epsilon)>0, \nonumber
    \end{align}
    and
    \begin{equation}
        \label{contr II left-cont}
        \int_{\theta^{n}}^{\theta}\big(-v_{\theta}(\alpha_{i}(\theta^{n}),i;y)\big)dy\leq \max_{y \in  \Theta}|v_{\theta}(\alpha_{i}(\theta^{n}),i;y)|(\theta-\theta^{n}).
    \end{equation}
    Combining (\ref{contr left-cont}) and (\ref{contr II left-cont}) with (\ref{equation in theta}) we arrive at the inequality,
    \begin{equation*}
        0<\min_{y\in [\alpha_{i}(\theta)+\epsilon,\alpha_{i}(\theta)+\delta]}|v_{x}(y,i;\theta)|(\delta-\epsilon)
        \leq \max_{(x,y)\in \mathcal K\times \Theta} |v_{\theta}(x,i;y)|(\theta-\theta^{n}) \leq C (\theta-\theta^{n}),
    \end{equation*}
    for a constant $C$ not depending on $n$ (thanks to Lemma \ref{lemma signs}). 
    Thus, letting $n\to \infty$ leads to a contradiction.

 We next show that the map $\theta\mapsto \alpha_{i}(\theta)$ is right-continuous.
    Fix a sequence $\{\theta^{n}\}_{n\in\mathbb{N}}\subseteq \Theta$ such that $\theta^{n}\searrow\theta$.
 Again by the monotonicity of the map $\theta\mapsto\alpha_{i}(\theta)$, arguing by contradiction we assume that there exists $\delta>0$ such that $\alpha_{i}(\theta^{n})\leq\alpha_{i}(\theta)-\delta$ for any $n\in\mathbb{N}$. 
   Similarly to the first part of the proof, we find 
    \begin{align*}
        \int_{\theta}^{\theta^{n}}\big(-v_{\theta}(\alpha_{i}(\theta^{n}),i;y)\big)dy=\int_{\alpha_{i}(\theta^{n})}^{\alpha_{i}(\theta)}\big(-v_{x}(y,i;\theta^{n})\big)dy.
    \end{align*}
    which leads to the following inequality
    \begin{equation*}
        0<\min_{y\in [\alpha_{i}(\theta)-\delta,\alpha_{i}(\theta)-\epsilon]}|v_{x}(y,i;\theta)|(\delta-\epsilon)\leq\max_{(x,y) \in \mathcal K \times \Theta}|v_{\theta}(x,i;y)|(\theta^{n}-\theta) \leq C (\theta-\theta^{n}), 
    \end{equation*}
    for  $\mathcal K$ as in \eqref{eq compact X} and a constant $C$ not depending on $n$.
    Thus, letting $n\to \infty$, we obtain a contradiction. 

 By repeating the same argument, one can show that the map $\theta\mapsto \beta_{i}(\theta)$ is continuous, thus completing the proof. 
\end{proof}

\subsection{Analysis of the stationary distribution}

According to the road-map discussed at the end of Section \ref{sec:ErgodicMFG}, having solved the ergodic singular stochastic control problem parametrized by $\theta \in \mathbb{R}_+$, the next step consists in characterizing the stationary distribution of $(X^{\xi^{*}(\theta)}_{t},Y_{t})_{t\geq 0}$. First of all, we show that a stationary distribution for such a process does exist.

\begin{proposition}
\label{Prop 4.1}
     For any $\theta\in\mathbb{R}_{+}$, there exists a unique stationary distribution $\nu^{\theta}\in\mathcal{P}(\mathcal{O})$ for  $(X_{t}^{\xi^{*}(\theta)},Y_{t})_{t\geq 0}$.
\end{proposition}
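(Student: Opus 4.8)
The plan is to show that the optimally controlled process $(X^{\xi^*(\theta)}, Y)$ is a positive-recurrent ergodic Markov process whose state space is (essentially) confined to the compact set $\bigcup_{i \in \mathbb{Y}} [\alpha_i(\theta), \beta_i(\theta)] \times \{i\}$, and then invoke a standard ergodicity result for such reflected regime-switching diffusions. First I would fix $\theta$ (it plays no role beyond determining the barriers, so I will suppress it) and recall from Theorem \ref{Thm 3.2} that $\xi^*$ is the unique solution to the Skorokhod reflection problem $\mathbf{SP}(\alpha,\beta,x,i)$, so that by Definition \ref{eq:Def 3.1}-(\ref{eq:Def 3.1-2}) one has $X^{\xi^*}_t \in [\alpha_{Y_t}, \beta_{Y_t}]$ for all $t \geq 0$, $\mathbb{P}\otimes dt$-a.s. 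In particular $(X^{\xi^*}_t, Y_t)$ lives in the compact set $\mathcal{K} := \{(x,i) \in \mathcal{O} : \alpha_i \leq x \leq \beta_i\}$ for $t > 0$, uniformly in the (arbitrary) starting point, since the reflection pushes the process into $\mathcal K$ instantaneously.

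Next I would argue that $(X^{\xi^*}, Y)$ is a strong Markov process with the Feller property on $\mathcal{K}$: this follows from the pathwise uniqueness and the explicit Skorokhod-map representation \eqref{eq:(3.20)}, which gives continuity of $X^{\xi^*}$ with respect to the driving data $(I(X^{\xi^*}), Y)$ in the appropriate sense, together with the fact that $Y$ is a Feller Markov chain independent of $W$. The core of the argument is then to establish that the process is \emph{irreducible} (in the sense of having a reachable state / satisfying a minorization condition): because $\sigma(\cdot, i) > 0$ on $\mathcal{I}$ for every $i$ (Assumption \ref{eq:Ass 2.1}-(\ref{eq:(2.1-4)})) the diffusion part is uniformly nondegenerate on the compact set $\mathcal{K}$, and the Markov chain $Y$ is irreducible by hypothesis, so the pair can move between any two points of $\mathcal{K}$ with positive probability in finite time; combined with compactness of $\mathcal{K}$ this yields a Doeblin-type minorization, hence existence and uniqueness of the invariant probability measure $\nu^\theta \in \mathcal{P}(\mathcal{O})$ supported on $\mathcal{K}$. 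Concretely I would either verify a Lyapunov/drift condition (trivial here since the state space of the controlled process is already compact, so one may take $V \equiv 1$) plus a small-set condition, and quote the standard ergodic theorem for Markov processes (e.g., Meyn--Tweedie, or the hybrid-diffusion version in \cite{yin2009hybrid}), or directly invoke the known results on stationary distributions of reflected regime-switching diffusions.

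The main obstacle I anticipate is the careful verification of the Feller/strong-Markov property and the minorization (small-set) condition for the \emph{reflected} regime-switching diffusion at the two-sided moving barriers $\alpha_{Y_t}, \beta_{Y_t}$ — the barriers jump when $Y$ jumps, so one must check that the Skorokhod map behaves well across regime switches and that the process does not get "trapped" in a way that destroys irreducibility; one also needs $\alpha_i < \beta_i$ for all $i$ (which holds by Assumption \ref{eq:Ass 3.1} and the discussion after Proposition \ref{eq:Prop 3.3}) so that each fibre $[\alpha_i,\beta_i]$ has nonempty interior and the nondegenerate diffusion can fill it out. A secondary technical point is to confirm that $\nu^\theta$ charges only $\mathcal{O}$ (i.e., does not put mass on the boundary $\{\underline x, \overline x\}$), which is immediate from $\mathcal K \subset \mathcal O$. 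Uniqueness then follows from the irreducibility, and existence from compactness of $\mathcal{P}(\mathcal K)$ together with the Krylov--Bogolyubov argument applied to the Feller semigroup. I would relegate the routine Feller-property estimates to an appendix and keep the main text to the irreducibility-plus-compactness argument.
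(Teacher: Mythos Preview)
Your approach is correct in principle but differs substantially from the paper's. The paper does \emph{not} verify the Feller property or a Doeblin minorization. Instead it proves that $(X^{\xi^*(\theta)},Y)$ is a \emph{regenerative process} with finite-mean regeneration epoch and then invokes Theorem 2.1 in \cite{Sigman}. Concretely, the regeneration time is $\eta:=\inf\{t>0: X^{\xi^*(\theta)}_t=\alpha_{Y_t}(\theta)\}$; the strong Markov property gives the regenerative structure, and the bulk of the proof is an elementary estimate showing $\mathbb{E}_{(x,i)}[\eta]<\infty$. This estimate is obtained by comparison with the unreflected diffusion between jumps of $Y$, yielding a uniform lower bound $\rho>0$ on the probability that the process hits $\alpha_i(\theta)$ before $Y$ jumps, and then a geometric-series bound $\sup_{(x,i)}\mathbb{E}_{(x,i)}[\eta(\alpha_i(\theta))]\leq \rho^{-1}\max_i\mathbb{E}_i[\tau]$.

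What each approach buys: your Feller-plus-Doeblin route is the textbook ergodic-theory argument and, if carried through, would yield the stronger conclusion of exponential ergodicity; however, the verification of the Feller property and the small-set condition for a two-sided reflected diffusion whose barriers \emph{jump} with $Y$ (and hence force jumps of $X^{\xi^*}$ via $\xi^\pm$) is more delicate than you indicate---this is exactly the ``main obstacle'' you flag, and it is nontrivial. The paper's regenerative argument sidesteps these analytic verifications entirely: it uses only the strong Markov property, a one-dimensional comparison principle, and the regularity of the diffusion in a fixed regime, making it shorter and more self-contained for the purpose at hand (mere existence and uniqueness of $\nu^\theta$).
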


\begin{proof}
In light of  Theorem 2.1 in \cite{Sigman}, in order to establish the existence and uniqueness of a stationary distribution, it is sufficient to show that $(X^{\xi^{*}(\theta)}_{t},Y_{t})_{t\geq 0}$ is a regenerative process (cf. Definition (D1) in \cite{Sigman}) with finite length regenerative epochs.  
In particular, for $(x,i)\in \mathcal{O}$, setting $\eta :=\inf\{ t>0 : X^{\xi^{*}(\theta)}_{t} = \alpha_{Y_t}(\theta)\},\;\mathbb{P}_{(x,i)}$\text{-a.s.}, by strong Markov property we have that the processes $(X^{\xi^{*}(\theta)}_{t +\eta },Y_{t + \eta})_{t\geq 0}$ and the collection of random variables $ ( (X^{\xi^{*}(\theta)}_{t},Y_{t})_{t < \eta}, \eta)$ are independent.
Moreover, when starting from points in the set $\{ (\alpha_i (\theta) , i) | i \in \mathbb Y \}$, the process  $(X^{\xi^{*}(\theta)}_{t},Y_{t})_{t\geq 0}$ has the same distribution as the process $(X^{\xi^{*}(\theta)}_{t + \eta },Y_{t + \eta})_{t\geq 0}$.
Thus, $(X^{\xi^{*}(\theta)}_{t},Y_{t})_{t\geq 0}$ is a regenerative process, with regenerative epoch $\eta$. 
In order to conclude the proof, it remains to show that $\mathbb{E}_{(x,i)} [\eta] < \infty$, for any $(x,i)\in\mathcal{O}$.

For $(x,i)\in\mathcal{O}$ and $y\in [\min_{j\in\mathbb{Y}}\alpha_{j}(\theta),\max_{j\in\mathbb{Y}}\beta_{j}(\theta)]$, we define the $\mathbb{F}\text{-hitting time}$ $\eta(y):=\inf\{t>0:X^{\xi^{*}(\theta)}_{t}=y\}$.
If $Y_{0}=i$ by comparison principle (see Proposition B.2) we have that $X^{x,\xi^{*}(\theta)}_{t}\leq X^{\beta_{i}(\theta),\xi^{*}(\theta)}_{t},\;\mathbb{P}_{i}\text{-a.s.}$.
Thus, we have
\begin{equation}
    \label{eq estimate eta}
    \mathbb E_{(x,i)} [\eta] \leq  \mathbb E _{(\beta_i(\theta),i)} [ \eta ] \leq \mathbb E _{(\beta_i(\theta),i)} [ \eta (\alpha_i (\theta)) ].
\end{equation}

We now estimate $\mathbb E _{(\beta_i(\theta),i)} [ \eta (\alpha_i (\theta)) ]$. Let $\tau$ be the first jump of the process $Y$, and, denoting by $Z$ the solution to the SDE
\begin{equation}\label{eq SDE Z fixed i}
 dZ_{t}=b(Z_{t},i)dt+\sigma(Z_{t},i)dW_{t}, \quad Z_{0}=\beta_{i}(\theta),
\end{equation}
for $\tilde \eta:=\inf\{t>0:Z_{t}= \alpha_{i}(\theta) \}$, by comparison principle (a slightly version of Proposition B.2) we also have $ X^{\beta_{i}(\theta),\xi^{*}(\theta)}_{t} \leq Z_t,\text{ for any }t\in [0,\tau\wedge \tilde{\eta}),\;\mathbb{P}_{i}\text{-a.s.}$,
which in turn implies
    \begin{equation*}
        \label{prob of hitting times}
         \mathbb{P}_{(\beta_{i}(\theta),i)}\big( \eta(\alpha_{i}(\theta))<\tau \big) \geq \mathbb{P}_{(\beta_{i}(\theta),i)}\big( \tilde \eta <\tau \big).
    \end{equation*} 
Therefore, we find
\begin{equation}\label{eq estimate hotting probability from below}
  \mathbb{P}_{(\beta_{i}(\theta),i)}\big(  \eta (\alpha_i(\theta)) <\tau \big) \geq \mathbb{P}_{(\beta_{i}(\theta),i)}\big( \tilde \eta <\tau \big)  
      = \int_0^\infty \mathbb E [\boldsymbol{1}_{\{ \tilde \eta < t \} }] \,   F_\tau (dt) := \rho >0, 
\end{equation} 
where $F_{\tau}$ denotes the distribution function of $\tau$ and the last inequality follows from the fact that the SDE \eqref{eq SDE Z fixed i} induces a regular diffusion (cf.\ Assumption \ref{eq:Ass 2.1}).

Using \eqref{eq estimate hotting probability from below}, we have
\begin{align}
        \label{inequality stopping}
         \mathbb{E}_{(x,i)}\big[ \eta(\alpha_{i}(\theta)) \big] 
         &=\mathbb{E}_{(x,i)}\big[ \eta(\alpha_{i}(\theta))\boldsymbol{1}_{\{\eta(\alpha_{i}(\theta))<\tau\}}\big]+\mathbb{E}_{(x,i)}\big[ \eta(\alpha_{i}(\theta))\boldsymbol{1}_{\{\eta(\alpha_{i}(\theta))\geq\tau\}}\big] \\ 
         &\leq  \mathbb{E}_{i}\big[ \tau\big]+\mathbb{E}_{(x,i)}\big[\mathbb{E}_{(X^{\xi^{*}(\theta)}_{\tau},Y_{\tau})}\big[\eta(\alpha_{i}(\theta))(1-\boldsymbol{1}_{\{\eta(\alpha_{i}(\theta))<\tau\}})\big]\big] \nonumber \\
         &= \mathbb{E}_{i}\big[\tau\big] +\mathbb{E}_{(x,i)}\big[(1-\boldsymbol{1}_{\{\eta(\alpha_{i}(\theta))<\tau\}})\big]\mathbb{E}_{(x,i)}\big[\mathbb{E}_{(X^{\xi^{*}(\theta)}_{\tau},Y_{\tau})}\big[\eta(\alpha_{i}(\theta))\big]\big] \nonumber \\
         &= \mathbb{E}_{i}\big[ \tau\big]+\big(1-\mathbb{P}_{(x,i)}\big( \eta(\alpha_{i}(\theta))<\tau \big)\big)\mathbb{E}_{(x,i)}\big[\eta(\alpha_{i}(\theta))\big], \nonumber \\
         &\leq \mathbb{E}_{i}\big[ \tau\big]+(1-\rho) \mathbb{E}_{(x,i)}\big[\eta(\alpha_{i}(\theta))\big], \nonumber
    \end{align}
    where in the second equality we use the strong Markov property of $(X^{\xi^{*}(\theta)}_{t},Y_{t})_{t\geq 0}$. 
    Then, since (\ref{inequality stopping}) holds for any $(x,i)\in\mathcal{O}$, we obtain
    $$
        \sup_{(x,i)\in\mathcal{O}}\mathbb{E}_{(x,i)}\big[\eta(\alpha_{i}(\theta))\big]\leq \max_{i\in \mathbb{Y}} \, \mathbb{E}_{i}\big[ \tau\big]+(1-\rho)\sup_{(x,i)\in\mathcal{O}}\mathbb{E}_{(x,i)}\big[\eta(\alpha_{i}(\theta))\big].
    $$
    Equivalently, we write
    \begin{equation}
        \label{finite stopping time}
        \sup_{(x,i)\in\mathcal{O}}\mathbb{E}_{(x,i)}\big[\eta(\alpha_{i}(\theta))\big]\leq \frac{1}{\rho}\max_{i\in \mathbb{Y}}\mathbb{E}_{i}\big[ \tau\big]<\infty,
    \end{equation}
    where, in the last inequality, we have used that the process $(Y_{t})_{t\geq 0}$ is irreducible and positive recurrent.
    
    Finally, plugging \eqref{finite stopping time} into \eqref{eq estimate eta}, we conclude that $\mathbb{E}_{(x,i)}[\eta] < \infty $, which in turn implies that $(X^{\xi^{*}(\theta)}_{t},Y_{t})_{t\geq 0}$ is a regenerative process with  finite mean regenerative epochs. 
    Hence, by Theorem 2.1 in \cite{Sigman}, there exists a unique stationary distribution, concluding the proof. 
\end{proof}

Next, we characterize the stationary distribution and we study its stability with respect to the changes in the parameter $\theta$.

\begin{theorem}
    \label{Thm 4.1}
    Let $\nu^{\theta}\in\mathcal{P}(\mathcal{O})$ be the stationary distribution of $(X_{t}^{\xi^{*}(\theta)},Y_{t})_{t\geq 0}$. The following hold:
    \begin{enumerate}
        \item \label{(1) in Thm 4.1} 
        For any $i\in\mathbb{Y}$, the cumulative distribution function $\mu^{\theta}(x,i):=\nu^{\theta}((\underline{x},x],i),$ $x \in \mathcal{I}$ is $ C^{1}([\alpha_{i}(\theta),\beta_{i}(\theta)])\cap C^{2}((\alpha_{i}(\theta),\beta_{i}(\theta))\setminus \bigcup_{j\in\mathbb{Y}}\{\alpha_{j}(\theta),\beta_{j}(\theta)\})$ and it is the unique nondecreasing solution of the equation
    \begin{equation}
    \label{eq:(4.1)}
        \frac{1}{2}\sigma^{2}(x,i)\,\mu^{\theta}_{xx}(x,i)-(b(x,i)-\sigma\sigma_{x}(x,i))\mu^{\theta}_{x}(x,i)+\sum_{j\in\mathbb{Y}}q_{ji}\mu^{\theta}(x,j)=0,
    \end{equation}
    for $x\in (\alpha_{i}(\theta),\beta_{i}(\theta))$,
    satisfying the boundary conditions $\mu^{\theta}(x,i)=0,\,x\leq\alpha_{i}(\theta)$ and $\mu^{\theta}(x,i)=p(i),\,x\geq \beta_{i}(\theta)$.
    \item \label{Continuity of a->mu} The map $\nu:\mathbb{R}_{+}\to \mathcal{P}(\mathcal{O}),\; \theta\mapsto\nu^{\theta}$ is continuous.
    \end{enumerate}
\end{theorem}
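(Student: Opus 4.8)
The statement has two parts, and I would treat them in order. For Part (1), the idea is that the stationary density of a reflected regime-switching diffusion satisfies the forward Kolmogorov (Fokker--Planck) equation, which, once integrated once in $x$, becomes the ODE system \eqref{eq:(4.1)} for the cumulative distribution function $\mu^\theta(\cdot,i)$. Concretely, I would first recall from Proposition \ref{Prop 4.1} that the stationary distribution $\nu^\theta$ exists and is unique. Then I would derive the equation satisfied by $\mu^\theta$ by applying the stationarity identity $\int \mathcal{L}_{(X^{\xi^*},Y)} g \, d\nu^\theta = 0$ to a rich enough class of test functions $g$, being careful about the boundary terms coming from the Skorokhod reflection at $\alpha_i(\theta)$ and $\beta_i(\theta)$. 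Because the control is active only on the boundary of $[\alpha_i(\theta),\beta_i(\theta)]$ (Definition \ref{eq:Def 3.1}-(\ref{eq:Def 3.1-3})), inside the interval the generator is the uncontrolled one, and the adjoint computation produces exactly the operator in \eqref{eq:(4.1)} — note the sign change on the drift and the extra $\sigma\sigma_x$ term, which is the hallmark of the formal adjoint of $\mathcal{L}_{(X,Y)}$. The boundary conditions $\mu^\theta(x,i)=0$ for $x\le\alpha_i(\theta)$ and $\mu^\theta(x,i)=p(i)$ for $x\ge\beta_i(\theta)$ follow from the fact that the stationary process lives in $[\alpha_{Y_t},\beta_{Y_t}]$ and that the $Y$-marginal of $\nu^\theta$ is the stationary law $p$ of the chain (by independence of $Y$ from the control and ergodicity). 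For the regularity claim, elliptic (here, one-dimensional ODE) interior regularity applied piecewise on the intervals cut out by the points $\bigcup_j\{\alpha_j(\theta),\beta_j(\theta)\}$ gives $C^2$ there, and matching of the $C^1$ traces across those points (the coupling term $\sum_j q_{ji}\mu^\theta(x,j)$ is only continuous, not $C^1$, where some $\alpha_j$ or $\beta_j$ sits) yields $C^1$ up to and including $\alpha_i(\theta),\beta_i(\theta)$. Uniqueness among nondecreasing solutions follows either from a maximum-principle argument for the weakly-coupled system or, more simply, from the already-established uniqueness of $\nu^\theta$ in Proposition \ref{Prop 4.1}: any nondecreasing solution with the prescribed boundary data defines (by differentiation) a stationary measure, which must then coincide with $\nu^\theta$.

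**Part (2): continuity of $\theta\mapsto\nu^\theta$.** Here the plan is to combine the continuity of the free boundaries in $\theta$ (Proposition \ref{Cont of free-boundaries}) with continuous dependence of the solution of the ODE system \eqref{eq:(4.1)} on its data. Fix a sequence $\theta^n\to\theta$. By Proposition \ref{Cont of free-boundaries}, $(\alpha_i(\theta^n),\beta_i(\theta^n))\to(\alpha_i(\theta),\beta_i(\theta))$ for every $i$. The coefficients $b,\sigma,\sigma\sigma_x$ in \eqref{eq:(4.1)} do not depend on $\theta$, so the only $\theta$-dependence enters through the moving boundary points and the boundary values (which are $0$ and $p(i)$, independent of $\theta$). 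I would therefore argue that $\mu^{\theta^n}(\cdot,i)\to\mu^\theta(\cdot,i)$ uniformly on compacts: the family $\{\mu^{\theta^n}(\cdot,i)\}_n$ is uniformly bounded (by $p(i)$) and equi-Lipschitz on compacts away from the boundaries (uniform interior ODE estimates, since the coefficients are locally Lipschitz and bounded away from degeneracy by Assumption \ref{eq:Ass 2.1}-(\ref{eq:(2.1-4)})), hence by Arzel\`a--Ascoli every subsequence has a further subsequence converging uniformly on compacts; passing to the limit in the (integrated) ODE and using convergence of the boundaries, the limit solves \eqref{eq:(4.1)} for $\theta$ with the right boundary data, so by the uniqueness in Part (1) the limit is $\mu^\theta$. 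Uniqueness of the subsequential limit upgrades this to full convergence. Convergence of the CDFs $\mu^{\theta^n}(\cdot,i)\to\mu^\theta(\cdot,i)$ at every continuity point of the limit (here, everywhere, by the $C^1$ regularity) together with convergence of the total masses $p(i)$ is exactly weak convergence $\nu^{\theta^n}\to\nu^\theta$ in $\mathcal{P}(\mathcal{O})$.

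**Main obstacle.** The delicate point is the rigorous derivation of the adjoint equation \eqref{eq:(4.1)} together with the $C^1$ regularity across the interior points $\bigcup_j\{\alpha_j(\theta),\beta_j(\theta)\}$ and the matching of boundary conditions exactly at $\alpha_i(\theta),\beta_i(\theta)$, where the reflection is active. One must handle the local-time (singular) contribution of $\xi^*$ carefully in the stationarity identity: the test-function argument has to show that the reflection at $\alpha_i$ pushes no mass below $\alpha_i$ and the reflection at $\beta_i$ caps the mass at $p(i)$, which is where the generalization of D'Auria--Kella \cite{DAURIA20121566} to a non-constant-coefficient, nonlinear-boundary setting genuinely enters. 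A secondary technical nuisance in Part (2) is justifying the equi-Lipschitz bound uniformly as the boundaries move — one must take compacts strictly inside $\bigcap_n(\alpha_i(\theta^n),\beta_i(\theta^n))$ and separately control behavior near the (converging) boundaries using the boundary conditions, but this is routine once the boundary convergence of Proposition \ref{Cont of free-boundaries} is in hand.
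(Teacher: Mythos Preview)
Your plan is essentially correct and, for Part~(1), closely mirrors the paper's own argument in Appendix~\ref{Appendix C}: both proceed via the stationarity identity $\int \mathcal{L}^{\alpha,\beta}_{(X,Y)}\phi\,d\nu^\theta=0$ for suitable test functions, split the interval at the points $\bigcup_j\{\alpha_j(\theta),\beta_j(\theta)\}$, use interior elliptic regularity on each piece, integrate by parts to obtain the pointwise equation, identify the boundary values via the eigenvector problem $\sum_j q_{ji}\mu(\beta_j,j)=0$, and deduce $C^1$-matching from the vanishing of the jump terms. For uniqueness the paper uses exactly the maximum principle for weakly coupled systems (Theorem~1 in \cite{maxprinc}) that you list as your primary option; your alternative (reverse-engineer a stationary measure and invoke Proposition~\ref{Prop 4.1}) would also work but requires redoing the boundary-term analysis in the other direction.

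For Part~(2) there is a genuine, if minor, methodological difference. You propose an Arzel\`a--Ascoli compactness argument: uniform bounds plus equi-Lipschitz estimates yield subsequential limits, which solve the limiting system by stability, and uniqueness from Part~(1) closes the loop. The paper instead argues directly on the difference $v^n:=\mu-\mu^n$: since $v^n$ solves the same homogeneous elliptic system on the intersected domain with boundary data controlled by $|\alpha_i-\alpha_i^n|$ and $|\beta_i-\beta_i^n|$ (using the uniform gradient bound on $\mu^n$), the maximum principle of \cite{maxprinc} gives $\sup|v^n|\to0$ immediately. Both routes need the same two ingredients (continuity of the free boundaries from Proposition~\ref{Cont of free-boundaries} and a uniform bound on $\mu^n_x$), and both are valid; the paper's is shorter and more quantitative, while yours is the standard soft approach and avoids writing down the boundary values of $v^n$ explicitly.
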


\begin{proof}
For the first claim see Appendix \ref{Appendix C}. 
We only provide here a proof of Claim \ref{Continuity of a->mu}.

For any $(\alpha, \beta) = (\alpha_{i}, \beta_{i})_{i \in \mathbb Y}$, we denote by $\mu(\cdot,\cdot;\alpha,\beta)$ the solution of (\ref{eq:(4.1)}) with $(\alpha(\theta),\beta(\theta))=(\alpha,\beta)$. 
Then for $\Theta\subset \mathbb{R}_{+}$ compact such that $\theta\in\Theta$ and for $\{\theta^{n}\}_{n\in\mathbb{N}}\subset \Theta$ such that $\theta^{n}\to \theta$, as $n\uparrow\infty$, we know by Proposition \ref{Cont of free-boundaries} that $\alpha_{i}(\theta^{n})\to\alpha_{i}(\theta)=\alpha_{i}$ and $\beta_{i}(\theta^{n})\to\beta_{i}(\theta)=\beta_{i}$ for any $i\in\mathbb{Y}$ as $n\uparrow\infty$. For simplicity we set $(\alpha_{i}^{n},\beta_{i}^{n})_{i\in\mathbb{Y}}:=(\alpha_{i}(\theta^{n}),\beta_{i}(\theta^{n}))_{i\in\mathbb{Y}}$ for any $n\in\mathbb{N}$.
For any $n\in\mathbb{N}$, let $\mu^n(x,i):=\nu^{n}((\underline{x},x],i),\;(x,i)\in\mathcal{O}$ be the cumulative distribution function of the stationary distribution of the solution to $\mathbf{SP}(\alpha^n,\beta^n,x,i)$ (cf.\ Proposition \ref{Prop 4.1}).
According to Claim \ref{(1) in Thm 4.1}, $\mu^n$ satisfies (\ref{eq:(4.1)}) with boundary condition on $(\alpha^{n},\beta^{n})$. 
Next, for any $n\in\mathbb{N}$ and for any $(x,i)\in\bigcup_{i=1}^{d}(\alpha_{i}^{n},\beta^{n}_{i})\times \{i\}$, define $v^{n}(x,i):=\mu(x,i)-\mu^{n}(x,i)$. 

Since $\mu$ and $\mu^{n}$ are solutions to the   system (\ref{eq:(4.1)}), for each $n\in\mathbb{N}$, $v^{n}$ is a solution to the elliptic system
     \begin{equation*}
     \label{n-BVP}
     \begin{cases}
        \mathcal L^{*} _{(\widehat X,i)}v^n (x,i) +\sum_{j\in\mathbb{Y}}q_{ji}v^{n}(x,j)=0,\; x\in(\max \{ \alpha_i , \alpha_{i}^{n} \}, \min \{ \beta_i, \beta^{n}_{i} \}), \ i \in \mathbb Y, \\
         v^{n}( \max \{ \alpha_i , \alpha_{i}^{n} \} ,i)
            = \mu(\alpha^{n}_{i},i) \boldsymbol{1}_{ \alpha_i < \alpha_i^n }
              - \mu^n(\alpha_{i},i) \boldsymbol{1}_{ \alpha_i \geq \alpha_i^n },
         \\ 
         v^{n}( \min \{ \beta_i, \beta^{n}_{i} \},i)
            = (\mu(\beta^{n}_{i},i)-p(i)) \boldsymbol{1}_{ \beta_i \geq \beta^{n}_{i} }
              + (p(i) - \mu^{n} (\beta_{i},i)) \boldsymbol{1}_{ \beta_i < \beta^{n}_{i} },
     \end{cases}
     \end{equation*}
     where $\mathcal L^{*} _{(\widehat X,i)}v^n (x,i):= \frac{1}{2}\sigma^{2}(x,i)\,v^{n}_{xx}(x,i)-(b(x,i)-\sigma\sigma_{x}(x,i))v^{n}_{x}(x,i)$.
    Now, thanks to Assumption \ref{eq:Ass 2.1} and Assumption \ref{Ass 4.1}-(\ref{Ass 4.1-3}), we can apply Theorem 1 in \cite{maxprinc} to the previous system, which gives us
     \begin{equation*}
         \sup_{x\in \mathcal I _n}|v^{n}(x,i)|
         \leq C 
         \max\big\{ |\mu(\alpha^{n}_{i},i)|, |\mu^n(\alpha_{i},i)|,|\mu(\beta^{n}_{i},i)-p(i)|, | p(i) - \mu^{n} (\beta_{i},i)) | \big\}, 
     \end{equation*}
     where $ \mathcal I _n := (\max \{ \alpha_i , \alpha_{i}^{n} \}, \min \{ \beta_i, \beta^{n}_{i} \})$.
     Further, since the functions $\mu(\cdot,i), \mu^n (\cdot,i)$ are uniformly bounded solutions to elliptic equations, the functions $\mu_x^n (\cdot,i) $ are uniformly bounded, so that
     $$
     \begin{aligned}
      & \mu(\alpha^{n}_{i},i) = \mu(\alpha_{i},i) + \mu(\alpha^{n}_{i},i) - \mu(\alpha_{i},i) = \int_{\alpha_i}^{\alpha_i^n} \mu_x (x,i) dx \leq C |{\alpha_i}-{\alpha_i^n}|, \\
      & p(i) - \mu^{n} (\beta_{i},i)) = \int_{\beta_i}^{\beta_i^n} \mu_x^n(x,i) dx \leq C | {\beta_i^n}-{\beta_i}|.     \end{aligned}
     $$
     This allows us to improve the previous estimate for $\sup_{x\in \mathcal I _n}|v^{n}(x,i)|$, and to use the continuity of $\mu (\cdot,i)$ to find  
     \begin{equation*}
         \sup_{x\in \mathcal I _n}|v^{n}(x,i)|
         \leq C 
         \max\big\{ |\mu(\alpha^{n}_{i},i)|, | {\alpha_i}-{\alpha_i^n}|,|\mu(\beta^{n}_{i},i)-p(i)|, | {\beta_i^n}-{\beta_i}| \big\} \to 0,
     \end{equation*}
     as  $n \to \infty$,  which in turn implies that $\mu^n (x,i) \to \mu(x,i)$ as $n \to \infty$, for any $x \in \mathcal I$. 
     \\ \indent
     To conclude, we recall that $\mu^n ( \cdot,i)$ was defined as the cumulative distribution function associated to the measure $\nu^{\theta^n} ( dx , i)$. 
     Having shown that $\mu^n ( \cdot,i) \to \mu ( \cdot,i)$, with $\mu ( \cdot,i)$ being the cumulative distribution function associated to the measure $\nu^{\theta} ( dx , i)$, by an application of Theorem 5.25 in \cite{kallenberg2002foundations} we obtain that $\nu^{n}\rightharpoonup \nu^{\theta}$ as $n\to\infty$, thus completing the proof of Claim \ref{Continuity of a->mu}.
\end{proof}

\subsection{Existence and uniqueness of the MFG equilibrium}
Next, we turn our attention to the main result of this section. 
To this end, we introduce the operator $\mathcal{T}:\mathbb{R}_{+}\to \mathbb{R}_{+}$, as
\begin{equation}
\label{eq:(4.2)}
    \mathcal{T}\theta:=F\bigg(\sum_{i=1}^{d}\int_{\mathcal{I}}f(x)\nu^{\theta}(dx,i)\bigg)=:F\big( \langle f,\nu^{\theta} \rangle \big),\quad \theta\in\mathbb{R}_{+}
\end{equation}
where $\langle f,\nu^{\theta}\rangle:=\sum_{i\in\mathbb{Y}}\int_{\mathcal{I}}f(x)\nu^{\theta} (dx,i)$. Thanks to the previous results, we can now prove the existence and uniqueness of a stationary mean-field equilibrium.

\begin{theorem}
    \label{Thm: existence MFGE}
    There exists a unique MFG equilibrium $\theta^{*}$; i.e., a unique $\theta^{*}\in\mathbb{R}_{+}$ such that $\mathcal{T}\theta^{*}=\theta^{*}$.
\end{theorem}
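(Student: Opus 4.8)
The plan is to prove existence and uniqueness separately, both via the fixed-point operator $\mathcal{T}$ defined in \eqref{eq:(4.2)}.

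\textbf{Existence.} First I would establish that $\mathcal{T}$ is continuous. This follows by combining Claim \eqref{Continuity of a->mu} of Theorem \ref{Thm 4.1} (the map $\theta \mapsto \nu^\theta$ is weakly continuous) with the continuity of $F$ and $f$ from Assumption \ref{eq:Ass 2.3}-(\ref{eq:Ass 2.3-1}); one must be slightly careful since $f$ is unbounded, so to pass the limit through $\langle f, \nu^\theta\rangle$ I would invoke the growth bound $|f(x)| \le C(1+|x|^\beta)$ from Assumption \ref{eq:Ass 2.3-2-a} together with a uniform-in-$\theta$ moment estimate for $\nu^\theta$ (uniform integrability), which comes from the mean-reversion Assumption \ref{eq:Ass 2.1}-(\ref{eq:(2.1-3)}) and the fact that the reflection boundaries $\alpha_i(\theta), \beta_i(\theta)$ stay in a compact set as $\theta$ ranges over a compact set (Lemma \ref{Lemma 4.1} and Proposition \ref{Cont of free-boundaries}). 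The second ingredient is to exhibit a compact (here, bounded closed) interval $[\underline\theta, \overline\theta] \subset \mathbb{R}_+$ that $\mathcal{T}$ maps into itself. The lower bound: using $f \ge 0$ and $F \ge 0$ one gets $\mathcal{T}\theta \ge F(0) =: \underline\theta \ge 0$; if $F(0)=0$ one instead localizes using that $\alpha_i(\theta)$ is bounded below uniformly over a compact $\theta$-range, so $\langle f,\nu^\theta\rangle$ is bounded below by a positive constant. The upper bound is the delicate one: one needs an a priori bound showing that if $\theta$ is large then $\mathcal{T}\theta < \theta$. Here I would exploit the connection to the Dynkin game: as $\theta \uparrow \infty$, $\pi_x(x,\theta) \downarrow \kappa(x)$ (Assumption \ref{Ass 4.1}-(\ref{Ass 4.1-1})), hence by Proposition \ref{eq:Prop 3.1} the value $v(\cdot,i;\theta)$ decreases and the free boundaries $\alpha_i(\theta), \beta_i(\theta)$ converge monotonically (Lemma \ref{Lemma 4.1}) to finite limits $\widehat\alpha_i, \widehat\beta_i$ determined by $\widehat x_\pm(i)$. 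Consequently $\langle f, \nu^\theta \rangle$ stays bounded by a constant $M$ independent of $\theta$, so $\mathcal{T}\theta \le F(M) =: \overline\theta < \infty$; choosing the invariant set to be $[0, \max\{\overline\theta, F(M)\}]$ (or a suitable enlargement) does the job. Then Brouwer's (or Schauder--Tychonoff's, as announced) fixed point theorem applied to the continuous self-map $\mathcal{T}$ on this compact convex set yields a fixed point $\theta^*$, which by Definition \ref{eq: Def 1} together with Proposition \ref{Prop 3.5} and Proposition \ref{Prop 4.1} is an ergodic MFG equilibrium.

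\textbf{Uniqueness.} For uniqueness I would show $\mathcal{T}$ is strictly decreasing, so it can have at most one fixed point. The chain of monotonicities is: $\theta \mapsto \alpha_i(\theta)$ and $\theta \mapsto \beta_i(\theta)$ are nonincreasing (Lemma \ref{Lemma 4.1}); hence the reflection interval $[\alpha_{Y_t}(\theta), \beta_{Y_t}(\theta)]$ moves down as $\theta$ increases, which via a comparison/coupling argument for the Skorokhod-reflected process (Proposition \ref{Prop 4.1}'s construction, or a pathwise comparison as in Appendix B) forces the stationary law $\nu^\theta$ to be stochastically decreasing in $\theta$; since $f$ is (strictly) increasing, $\theta \mapsto \langle f, \nu^\theta \rangle$ is nonincreasing, and since $F$ is strictly increasing, $\mathcal{T}$ is nonincreasing. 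To upgrade to \emph{strict} monotonicity (so that $\mathcal{T}\theta_1 = \mathcal{T}\theta_2$ with $\theta_1<\theta_2$ is impossible), I would argue that $\pi_{x\theta}<0$ strictly (Assumption \ref{eq:Ass 2.2}-(\ref{eq:2.2-3})) forces $v(x,i;\theta)$ to be strictly decreasing in $\theta$ on the continuation region, hence at least one free boundary is \emph{strictly} decreasing, which makes the support (or the mass distribution within the support) of $\nu^\theta$ genuinely shift, giving $\langle f, \nu^{\theta_1}\rangle > \langle f, \nu^{\theta_2}\rangle$ strictly. A strictly decreasing function has at most one fixed point, so combined with existence the equilibrium $\theta^*$ is unique.

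\textbf{Main obstacle.} I expect the principal difficulty to be the a priori upper bound making $\mathcal{T}$ a self-map of a compact set — i.e., controlling $\langle f, \nu^\theta\rangle$ uniformly in $\theta$ despite $f$ being unbounded on $\mathcal{I}$. This requires both the monotone convergence of the free boundaries to finite limits as $\theta \uparrow \infty$ (leaning on Assumption \ref{Ass 4.1}-(\ref{Ass 4.1-1})) and a uniform moment estimate for the family $\{\nu^\theta\}$, the latter combining the dissipativity condition $b_x \le -c$ with the uniform boundedness of the reflection barriers; the growth compatibility between $f$ and $F$ in Assumption \ref{eq:Ass 2.3}-(\ref{eq:Ass 2.3-2-a}) is exactly what is needed to close the loop. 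A secondary subtlety is justifying the strict (not merely weak) monotonicity needed for uniqueness, which relies essentially on Assumption \ref{eq:Ass 2.2}-(\ref{eq:2.2-3}).
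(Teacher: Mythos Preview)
Your overall architecture (continuity of $\mathcal{T}$ + invariant compact interval + Schauder--Tychonoff for existence, monotonicity for uniqueness) is exactly the paper's. However, the construction of the invariant interval has a genuine gap, and the uniqueness argument is overcomplicated.

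\textbf{The invariant interval.} You claim that because the free boundaries converge monotonically to finite limits $\widehat\alpha_i,\widehat\beta_i$ as $\theta\uparrow\infty$, it follows that $\langle f,\nu^\theta\rangle$ is bounded by a constant $M$ \emph{independent of $\theta$}. This is false: since $\theta\mapsto\beta_i(\theta)$ is nonincreasing (Lemma~\ref{Lemma 4.1}), the $\theta\uparrow\infty$ limits are \emph{lower} bounds on the boundaries, not upper bounds; and indeed from $\lim_{\theta\downarrow0}\pi_x(x,\theta)=\infty$ (Assumption~\ref{Ass 4.1}-(\ref{Ass 4.1-1})) one expects $\beta_i(\theta)\uparrow\overline{x}$ as $\theta\downarrow0$, so $\langle f,\nu^\theta\rangle$ is not uniformly bounded. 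Your fallback for the lower bound, $\mathcal{T}\theta\ge F(0)$, may equal $0$, and your alternative ``localize over a compact $\theta$-range'' is circular since no such range has yet been produced. The paper's fix is to reverse the order: the auxiliary Dynkin game with running cost $\kappa$ (i.e.\ the $\theta\uparrow\infty$ limit) yields boundaries $\underline\alpha_i\le\alpha_i(\theta)$ and $\underline\beta_i\le\beta_i(\theta)$ valid for \emph{all} $\theta$; by a pathwise comparison of the reflected processes this gives a universal lower bound $\underline\theta:=F(\langle f,\underline\nu\rangle)\le\mathcal{T}\theta$. Only \emph{then}, restricting to $\theta\ge\underline\theta$, does monotonicity of $\beta_i$ supply $\beta_i(\theta)\le\beta_i(\underline\theta)$ and hence $\mathcal{T}\theta\le F\big(\sum_ip(i)f(\beta_i(\underline\theta))\big)=:\overline\theta$. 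So $\mathcal{T}$ maps $K:=[\underline\theta,\overline\theta]$ into itself. Once you have this, the continuity argument is also simpler than you suggest: for $\theta\in K$ the support of $\nu^\theta$ lies in a fixed compact $\widehat K\subset\mathcal I$, so no uniform-integrability argument is needed to pass the limit through $\langle f,\nu^\theta\rangle$.

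\textbf{Uniqueness.} You do not need strict monotonicity. If $\theta^*<\theta'$ are both fixed points, then nonincreasingness of $\mathcal{T}$ (which you already established via Lemma~\ref{Lemma 4.1} and comparison) gives $\theta'=\mathcal{T}\theta'\le\mathcal{T}\theta^*=\theta^*$, a direct contradiction. The extra work to upgrade to strict monotonicity via $\pi_{x\theta}<0$ is unnecessary.
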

\begin{proof}
    \textbf{Step 1: Set of relevant $\theta$.} We introduce the following auxiliary Dynkin game
    \begin{align}
        \label{aux Dynkin game}
        \underline{v}(x,i)  :=\inf_{\tau\in\mathcal{T}}\sup_{\sigma\in\mathcal{T}}\widehat{\mathbb{E}}_{(x,i)}\bigg[ & \int_{0}^{\tau\wedge \sigma}e^{\int_{0}^{t}b_{x}(\widehat{X}_{s},Y_{s})ds}\kappa(\widehat{X}_{s})ds  \\
        & +k_{1}e^{\int_{0}^{\tau}b_{x}(\widehat{X}_{s},Y_{s})ds}\boldsymbol{1}_{\{\tau<\sigma\}}+k_{2}e^{\int_{0}^{\sigma}b_{x}(\widehat{X}_{s},Y_{s})ds}\boldsymbol{1}_{\{\sigma<\tau\}} \bigg]. \nonumber
    \end{align}
    Arguing as in Section \ref{Section 3.1}, the Dynkin game (\ref{aux Dynkin game}) admits a value and a saddle point. 
    In particular, we can define $\underline{\alpha}_{i}:=\sup\{x\in\mathcal{I}:\underline{v}(x,i)\geq k_{1}\}$ and $\underline{\beta}_{i}:=\inf\{x\in\mathcal{I}:\underline{v}(x,i)\leq k_{2}\}$, as well as the stopping regions $\underline{\mathcal{S}}_{\inf}:=\{(x,i)\in\mathcal{O}:\underline{v}(x,i)\geq k_{1}\}$ and $\underline{\mathcal{S}}_{\sup}:=\{(x,i)\in\mathcal{O}:\underline{v}(x,i)\leq k_{2}\}$. 
    Thus, we have that the stopping times $\underline{\tau}^{*}(x,i):=\inf\{t\geq 0:(\widehat{X}_{t},Y_{t})\in \underline{\mathcal{S}}_{\inf}\}$ and $\underline{\sigma}^{*}(x,i):=\inf\{t\geq 0:(\widehat{X}_{t},Y_{t})\in \underline{\mathcal{S}}_{\sup}\}$ realize a saddle-point for (\ref{aux Dynkin game}). 
    Also by Condition (\ref{eq:2.2-3}) in Assumption \ref{eq:Ass 2.2} and Condition (\ref{Ass 4.1-1}) in Assumption \ref{Ass 4.1} we conclude that $\pi_{x}(x,\theta)\geq \kappa(x)$, for any $\theta\in\mathbb{R}_{+}$, so that (cf. (\ref{eq:(3.4)}) and (\ref{eq:(3.6)})) 
    \begin{equation}
        \underline{v}(x,i)\leq v(x,i;\theta),\;\text{ for any } (x,i;\theta)\in\mathcal{O}\times \mathbb{R}_{+}.
    \end{equation}
    Then,  for any $(i, \theta)\in \mathbb{Y}\times \mathbb{R}_{+}$, we have
    \begin{equation}
        \alpha_{i}(\theta)=\sup\{x\in\mathcal{I}: v(x,i;\theta)\geq k_{1}\} \geq \sup\{x\in \mathcal{I}: \underline{v}(x,i)\geq k_{1}\}=\underline{\alpha}_{i}.
    \end{equation}
     Analogously, we find
    \begin{equation}
        \label{eq:(4.5)}
        \beta_{i}(\theta)\geq\underline{\beta}_{i},\;\text{ for any } (i,\theta)\in \mathbb{Y}\times \mathbb{R}_{+}.
    \end{equation}
    We then define
    \begin{equation}
        \underline{\theta}:=F\bigg(\sum_{i=1}^{d}\int_{\mathcal{I}}f(x)\underline{\nu}(dx,i)\bigg)
    \end{equation}
    where $\underline{\nu}$ is the unique stationary distribution of the process $(\underline{X}_{t},Y_{t})_{t\geq 0}$ and  $\underline{X}$ is the solution to the $\mathbf{SP}((\underline{\alpha}_{i},\underline{\beta}_{i})_{i\in\mathbb{Y}}; x, i)$. Then, by comparison principle (see Proposition B.2), since $f$ is increasing, we obtain
    \begin{equation}
        \mathbb{E}_{(x,i)}\big[ f(\underline{X}_{t})\big]\leq\mathbb{E}_{(x,i)}\big[ f(X^{\xi^{*}(\theta)}_{t})\big],\;\text{ for any }t\geq 0.
    \end{equation}
    Hence, by Proposition \ref{Prop 4.1}, we can employ the ergodic theorem (see Theorem 4.4 in \cite{yin2009hybrid}) in order to deduce that
    $$
    \langle f, \underline{\nu} \rangle = 
   \lim_{t \to \infty} \frac1t \int_0^t \mathbb{E}\big[ f(\underline{X}_{t})\big] dt \leq  \lim_{t \to \infty} \frac1t \int_0^t \mathbb{E}\big[ f(X^{\xi^{*}(\theta)}_{t})\big] dt = 
   \langle f, {\nu^\theta} \rangle, 
    $$
    which in turn, by monotonicity of $F$, leads to
    \begin{equation}
        \label{eq:(4.8)}
        \underline{\theta}\leq\mathcal{T}\theta.
    \end{equation}
    To find an upper bound for $\mathcal{T}\theta$, we proceed as follows. 
    Since, $X^{\xi^{*}(\theta)}_{t}\in [\alpha_{Y_{t}}(\theta),\beta_{Y_{t}}(\theta)]$, for any $t\geq 0,\;\mathbb{P}$-a.s.\ for any $\theta\in\mathbb{R}_{+}$, thanks to the monotonicity of $F$ and $f$ (see Condition (\ref{eq:Ass 2.3-1}) Assumption \ref{eq:Ass 2.3}), we have
    \begin{equation*}
    \mathcal{T}\theta   =F\bigg(\sum_{i=1}^{d}\int_{\mathcal{I}}f(x)\nu^{\theta}(dx,i)\bigg)\leq F\bigg(\sum_{i=1}^{d}p(i)f(\beta_{i}(\theta))\bigg) \leq F\bigg(\sum_{i=1}^{d}p(i)f(\beta_{i}(\underline{\theta}))\bigg), 
    \end{equation*}
    for any $\theta\geq \underline{\theta}$. 
    In the last inequality above we have used the monotonicity of $\theta \mapsto (\beta_{i}(\theta))_{i\in\mathbb{Y}}$ (cf. Lemma \ref{Lemma 4.1}).
    Hence, for $\overline{\theta}:=F\big(\sum_{i=1}^{d}p(i)f(\beta_{i}(\underline{\theta}))\big)$   we find that
    \begin{equation}
        \label{eq:(4.10)}
        \mathcal{T}\theta\leq \overline{\theta}, \quad \text{for any $\underline{\theta}\leq \theta$.}
    \end{equation}
	Thus, combining (\ref{eq:(4.8)}) and (\ref{eq:(4.10)}), we conclude that any potential fixed point of $\mathcal{T}$ must lie in the convex, compact set 
 \begin{equation}\label{eq compact set}
 K:=[\underline{\theta},\overline{\theta}]\subset \mathbb{R}_{+}.
 \end{equation}
    
    \textbf{Step 2: Continuity of $\mathcal{T}$.} 
    We begin by observing  that the barriers related to $\theta \in K$, belong to a compact $\widehat{K}$, defined in terms of the compact set $K$ in \eqref{eq compact set}. 
    Indeed, by Step 1 we know that $\underline{\alpha}_{i}\leq \alpha_{i}(\theta)$ for any $(i,\theta)\in\mathbb{Y}\times K$, and, by monotonicity of the map $\theta\mapsto \beta_{i}(\theta)$ for $i\in\mathbb{Y}$ we have $\beta_{i}(\theta)\leq \beta_{i}(\underline{\theta})$ for $i \in\mathbb{Y}$. 
    Thus, for each $\theta\in K$ we have,
    \begin{equation}
        \label{remark compact set} 
        \alpha_{i}(\theta),\beta_{i}(\theta) \in \Big[ \min_{j\in\mathbb{Y}}\underline{\alpha}_{j},\max_{j\in\mathbb{Y}}\beta_{j}(\underline{\theta}) \Big]
        =:\widehat{K}, \; \text{for any $i\in\mathbb{Y}$.}
    \end{equation}
    
    Define now the map $\mathcal{T}_{1}:K\to \mathcal{P}(\widehat K \times \mathbb Y)$ by
        $\mathcal{T}_{1}(\theta):=\nu^\theta,$ $ \theta\in K$.
    By \eqref{remark compact set}, such a map is well defined.
   Moreover, thanks to Claim \ref{Continuity of a->mu} in Theorem \ref{Thm 4.1}, the map $\mathcal T _1$ is continuous. 
   Next, we denote by $\mathcal{T}_{2}:\mathcal{P}(\widehat K \times \mathbb Y) \to K$ the map 
       $ \mathcal{T}_{2}(\nu):=F\big( \sum_{i=1}^{d}\int_{\mathcal{I}}f(x)\nu(dx,i)\big)$.
   Since the functions $f$ and $F$ are continuous and the probability measures have compact support, the map $\mathcal T _2$ is clearly continuous.
Concluding, the map $\mathcal{T}:=\mathcal{T}_{2}\circ\mathcal{T}_{1}:K \to K$ is continuous in the convex compact set $K$ and, by Schauder-Tychonof fixed point theorem ( Corollary 17.56 in \cite{aliprantis1999infinite}), there exist $\theta^{*}\in K$, such that $\mathcal{T}\theta^{*}=\theta^{*}$.
    
        \textbf{Step 3: Uniqueness.} Let $\theta^{*}\in K$ be the fixed-point of $\mathcal{T}$ and let $\theta'\in K$ another fixed-point of $\mathcal{T}$ such that $\theta^{*}\neq \theta'$. Without loss of generality we can assume that $\theta^{*}<\theta'$ (the opposite inequality can be treated similarly). Following the same arguments as in Step 1 we conclude that
    	$\alpha_{i}(\theta^{*})\geq \alpha_{i}(\theta'), $ $ \beta_{i}(\theta^{*})\geq \beta_{i}(\theta'),\;\forall i\in\mathbb{Y}$,
    so that $\theta'=\mathcal{T}\theta'\leq \mathcal{T}\theta^{*}=\theta^{*}$, which leads to a contradiction.
\end{proof}

\section{\emph{N}-Player game and approximate Nash equilibria}
\label{Approximation}

In this section we establish the classical connection between MFG and $N$-player game, by constructing approximate Nash equilibria starting from the MFG equilibrium.

\subsection{\emph{N}-player game} 
Let $W$ and $Y$ be as in Section \ref{Section 2} and
assume the filtered probability space $(\Omega,\mathcal{F},\mathbb{F}=\{\mathcal{F}_{t}\}_{t\geq 0},\mathbb{P})$ to be large enough to accommodate a sequence of independent and identically distributed $\mathbb{F}$-adapted processes $\{(W^{n},Y^{n})\}_{n\in\mathbb{N}}$
as well as independent and identically distributed $\mathcal I \times \mathbb Y$-random variables $\{ (x^n_0, i^n_0) \}_{n \in \mathbb N}$, $(x_0,i_0)$.
Each $(W^n,Y^n)$ has the same distribution as $(W,Y)$ and the random variables $(W^n,Y^n)$, $(W,Y)$, $\{ (x^n_0, i^n) \}_{n \in \mathbb N}$ and $(x_0,i_0)$ are assumed to be independent.

When player $n$ does not intervene, its state process $X^n$ evolves accordingly to the SDE
 $$
 dX^{n}_{t}=b(X^{n}_{t},Y^{n}_{t})dt+\sigma(X^{n}_{t},Y^{n}_{t})dW^{n}_{t},\quad (X^{n}_{0},Y^{n}_{0})=(x^n_0,i^n_0).
 $$
Given a boundary vector $(\alpha^{n},\beta^{n})=(\alpha^{n}_j,\beta^{n}_j)_{j \in\mathbb Y} \in \mathbb R  ^{2d}$, the $(\alpha^{n},\beta^{n})$-barrier-type strategy for player $n$ is the reflection process for the state of player $n$ in the regime-switching domain $(\alpha^{n}_{Y_{t}},\beta^{n}_{Y_{t}})_{t\geq 0}$; that is, the $\xi^n$-component of the solution $(X^n,\xi^n)$ to the Skorokhod problem $\mathbf{SP}(\alpha^n,\beta^n,x,i)$ for the noise $(W^{n},Y^{n})$, accordingly to Definition \ref{eq:Def 3.1}.  
Without carrying the dependence on the index of the player, we denote by $\mathcal A _b$ the set of barrier-type strategies.
The inclusion $\mathcal {A}_b \subset \mathcal{A}_e$ is given  by the fact that any $X$ controlled through a process belonging to $\mathcal {A}_b$ is bounded (see \eqref{admissible controls}).
Thus, when player $n$ choose a strategy $\xi^{n}\in\mathcal{A}_{b}$, its state process $X^{n,\xi^{n}}$ evolves following the regime-switching (reflected) SDE
\begin{equation*}
    dX^{n,\xi^{n}}_{t}=b(X^{n,\xi^{n}}_{t},Y^{n}_{t})dt+\sigma(X^{n,\xi^n}_{t},Y^{n}_{t})dW^{n}_{t}+d\xi^{n,+}_{t}-d\xi^{n,-}_{t},\quad (X^{n,\xi^{n}}_{0-},Y^{n}_{0})=(x^n_0,i^n_0).
\end{equation*}
The (generic) vector $\boldsymbol{\xi}:=(\xi^{1},...,\xi^{N})\in\mathcal{A}_{b}^{N}$ represents a tuple of control policies of the  $N$ players.
We denote by $\boldsymbol{\xi}^{-n} =( \xi^\ell) _{\ell \ne n}$ a vector of strategies of the opponents of player $n$, and we use the notation  $\boldsymbol{\xi}=(\xi^{n},\boldsymbol{\xi}^{-n})$. 
The profit functional of player $n$ is defined as
\begin{equation}
\label{eq:functJn}
    J^n(\xi^{n},\boldsymbol{\xi}^{-n}):=\limsup_{T\uparrow \infty}\frac{1}{T}\mathbb{E}\bigg[ \int_{0}^{T}\pi(X^{n,\xi^{n}}_{t},\theta^{N}_{\boldsymbol{\xi}^{-n}}(t))dt-k_{1}\xi^{n,+}_{T}+k_{2}\xi^{n,-}_{T} \bigg],
\end{equation} 
where $\theta^{N}_{\boldsymbol{\xi}^{-n}}$ denotes the mean-field interaction term between the players and has the form
\begin{equation}
\label{eq:thetaN}
    \theta^{N}_{\boldsymbol{\xi}^{-n}}(t):=
        F\bigg(\frac{1}{N-1}\sum_{\ell\neq n}f(X^{\ell,\xi^{\ell}}_{t}) \bigg), \quad t\geq0.
\end{equation}

For $\widehat K$ being the compact set as in \eqref{remark compact set}, define the set of restricted barrier strategies
\begin{equation}
    \label{eq restricted strategies}
    \widehat {\mathcal A}_b := \{ \text{$(\alpha^{n},\beta^{n})$-barrier-type strategy with $(\alpha^n_j,\beta^n_j)_{j\in\mathbb{Y}} \in \widehat{K}^{2d}$}\} \,\subset \mathcal A_b \subset \mathcal A _e,
\end{equation}
 we give the following definition of $\epsilon$-Nash equilibrium (see also Remark \ref{remark extension admissible deviations} below). 
\begin{definition}[$\epsilon$-Nash equilibrium]
\label{Nash eq}
   For $\epsilon>0$, $\bar{\boldsymbol{\xi}}=(\bar{\xi}^{1},...,\bar{\xi}^{N})\in\widehat{\mathcal{A}}_{b}^{N}$ is called $\epsilon$-Nash equilibrium for the $N$-player game if, for any $n=1,...,N$, one has  
   \begin{equation}
       J^{n}(\bar{\xi}^{n},\bar{\boldsymbol{\xi}}^{-n})\geq J^{n}(\xi^{n},\bar{\boldsymbol{\xi}}^{-n})-\epsilon,\quad \forall \xi^{n}\in\widehat{\mathcal{A}}_{b}.
   \end{equation}
\end{definition}


\begin{remark}[On the initial distribution] 
We point out that all the results in the previous sections hold true also if the deterministic initial condition $(X_{0-}, Y_0) = (x,i) \in \mathcal I \times \mathbb Y$ is replaced by the random initial condition $(X_{0-},Y_0)=(x_0,i_0)$. 
Indeed, by considering (with slight abuse of notation) the payoff
$$
J(x_0,i_0;\xi,\theta):=  \limsup_{T\uparrow \infty} \frac{1}{T} \mathbb E \bigg[ \mathbb{E}_{(x_0,i_0)}\bigg[ \int_{0}^{T}\pi(X^{\xi}_{t},\theta)dt-k_{1}\xi^{+}_{T}+k_{2}\xi^{-}_{T} \bigg]\bigg], 
$$
by the Markov property of the solution to the reflected Skorokhod problem, 
the MFG equilibrium $(\xi(\theta^{*}),\theta^*)$ is still given by Theorem \ref{Thm: existence MFGE}, with $\xi(\theta^{*})$ characterized as in in Proposition  \ref{Prop  3.5}. 
\end{remark}

We state the following auxiliary result, which will be useful for our subsequent analysis regarding the approximation.
\begin{proposition}
    \label{Prop 4.2}
    For any $N\geq 1$ and $\{\xi^{n}\}_{n\leq N}\in\widehat{\mathcal{A}}_{b}^{N}$, the ergodic limit\begin{equation*}
                    \lim_{T\uparrow\infty}\frac{1}{T}\int_{0}^{T}G(X^{1,\xi^{1}}_{t},Y^{1}_{t},...,X^{N,\xi^{N}}_{t},Y^{N}_{t})dt=\int_{\mathcal{O}^{N}}G(x_{1},i_{1},...,x_{N},i_{N})\otimes_{\ell=1}^{N}
                    \nu^{\ell}(dx_{\ell},i_{\ell}),
        \end{equation*}
     holds $\mathbb{P}$-a.s.\ for any bounded function $G:\mathcal{O}^{N}\to\mathbb{R}$.        \end{proposition}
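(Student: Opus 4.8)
The plan is to view the vector process $Z_t := (X^{1,\xi^{1}}_t,Y^1_t,\ldots,X^{N,\xi^{N}}_t,Y^N_t)$ as a positive Harris recurrent Markov process on a compact state space whose unique invariant probability is the product $\bigotimes_{\ell=1}^N\nu^\ell$, and then to invoke the ergodic theorem for such processes. The first observation is that, since each $\xi^n\in\widehat{\mathcal A}_b$ is a barrier-type strategy with barriers in $\widehat K$, Definition \ref{eq:Def 3.1} forces $X^{n,\xi^{n}}_t\in[\alpha^n_{Y^n_t},\beta^n_{Y^n_t}]\subseteq\widehat K$ for Lebesgue-a.e.\ $t\ge0$, $\mathbb P$-a.s.; hence $Z$ evolves in the compact set $\mathcal K:=\widehat K^{N}\times\mathbb Y^{N}\subset\mathcal O^{N}$. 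Moreover, because the components $(X^{n,\xi^{n}},Y^n)$ are driven by the mutually independent noises $(W^n,Y^n)$ and the dynamics of each component do not involve the others, $Z$ is a time-homogeneous strong Markov process whose transition semigroup is the tensor product $\bigotimes_{n=1}^N P^n_t$ of the reflected regime-switching transition semigroups of the single components; in particular, for $t>0$, $P^Z_t(z,\cdot)$ possesses a strictly positive density on $\mathcal K$ with respect to (Lebesgue on the reflected intervals) $\otimes$ (counting measure on $\mathbb Y^N$), namely the product of the component densities.

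Next I would identify the invariant measure and establish recurrence. By Proposition \ref{Prop 4.1} --- applied, exactly as in the proof of Theorem \ref{Thm 4.1}, to the Skorokhod problem $\mathbf{SP}(\alpha^n,\beta^n,\cdot,\cdot)$ --- each $(X^{n,\xi^{n}},Y^n)$ admits a unique stationary distribution $\nu^n$, and the regenerative structure exhibited in its proof, together with the non-degeneracy of $\sigma$ (which makes the regeneration epochs spread out and of finite mean), shows that this component is positive Harris recurrent. Tensorizing, $\mu:=\bigotimes_{n=1}^N\nu^n$ is an invariant probability for $Z$; and, by the positivity of the joint transition density from the previous step, $Z$ is $\mu$-irreducible --- hence, being a continuous-time process, aperiodic --- and admits a finite invariant measure, so that $Z$ is positive Harris recurrent with $\mu$ as its unique invariant probability.

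It then remains to apply the ergodic theorem for positive Harris recurrent Markov processes --- equivalently, the ergodic theorem for hybrid diffusions, Theorem 4.4 in \cite{yin2009hybrid} --- which gives, from every initial condition and for every $\mu$-integrable $G$, and in particular for every bounded measurable $G:\mathcal O^N\to\mathbb R$,
\[
    \lim_{T\uparrow\infty}\frac1T\int_0^T G\big(X^{1,\xi^{1}}_t,Y^1_t,\ldots,X^{N,\xi^{N}}_t,Y^N_t\big)\,dt=\mu(G)=\int_{\mathcal O^N}G(x_1,i_1,\ldots,x_N,i_N)\,\bigotimes_{\ell=1}^N\nu^\ell(dx_\ell,i_\ell),\qquad\mathbb P\text{-a.s.}
\]

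The step I expect to be the main obstacle is the second one: upgrading the component-wise positive recurrence of Proposition \ref{Prop 4.1} to positive Harris recurrence of the $N$-fold product process. Products of Harris recurrent Markov processes need not be Harris recurrent in general, and one must genuinely use that each reflected regime-switching diffusion has an honest, strictly positive transition density, so that irreducibility and aperiodicity survive tensorization. An alternative that sidesteps the general Harris machinery would be to build a regeneration structure directly for $Z$, e.g.\ by a Nummelin-type splitting at a common small set for the joint chain, and then apply the strong law of large numbers to the i.i.d.\ integrals of $G$ over the resulting regeneration cycles; in either approach the non-degeneracy of $\sigma$ and the finiteness of the mean regeneration epochs coming from Proposition \ref{Prop 4.1} are the essential inputs.
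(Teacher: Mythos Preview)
Your approach is correct and close in spirit to the paper's, but the paper's proof is considerably terser. The paper argues in three lines: the noises $(W^n,Y^n)$ are independent, so the component processes $(X^{n,\xi^n},Y^n)$ are independent; Proposition~\ref{Prop 4.1} gives each a unique stationary distribution $\nu^n$; hence the product $\otimes_{\ell}\nu^\ell$ is the unique stationary distribution of the joint process, and the ergodic limit is read off from Theorems~3.2.6 and~3.3.1 in \cite{daprato_zabczyk_1996}. In particular, the paper does not pass through the Harris machinery at all --- it cites the Da~Prato--Zabczyk strong law directly, for which uniqueness of the invariant measure is the operative hypothesis.

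Your more explicit route via strictly positive transition densities and positive Harris recurrence buys you something the paper's shortcut does not fully spell out: the Da~Prato--Zabczyk ergodic theorem in its basic form delivers a.s.\ convergence for invariant-measure-a.e.\ initial conditions, whereas positive Harris recurrence gives it from \emph{every} initial point, which is what the statement actually asserts. Your worry about tensorizing Harris recurrence is well placed in general, but here it dissolves exactly as you anticipate --- the nondegeneracy of $\sigma$ and the irreducibility of $Y$ give each component a strictly positive transition density, and the product of these is strictly positive on the compact product state space, so $\mu$-irreducibility of the joint process is immediate. So your argument is a legitimate, more self-contained alternative; the paper trades that explicitness for brevity by deferring to \cite{daprato_zabczyk_1996}.
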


\begin{proof}

Since $\{(W^{n}_{t},Y^{n}_{t})_{t\geq0}\}_{n\in\mathbb{N}}$ is i.i.d., the family  $\{(X^{n,\xi^n}_{t},Y^{n}_{t})_{t\geq0}\}_{n\leq N}$ is i.i.d. For any fixed $1\leq n\leq N$, by the proof of Proposition \ref{Prop 4.1} we know that the process $(X^{n,\xi^{n}}_{t},Y^{n}_{t})_{t\geq 0}$ has a unique stationary distribution $\nu^n$. 
Thus, by independence of the processes $(X^{n,\xi^{n}}_{t},Y^{n}_{t})_{t\geq 0}$, 
the stationary distribution for $(X^{1,\xi^{1}},Y^{1},...,X^{N,\xi^{N}},Y^{N})$, denoted by $\bar{\nu}^{N}\in\mathcal{P}(\mathcal{O}^{N})$, exists and it is given by
    \begin{equation*}
        \bar{\nu}^{N}(dx_{1},i_{1},...,dx_{N},i_{N})=\otimes_{\ell=1}^{N}\nu^{\ell}(dx_{\ell},i_{\ell}).
    \end{equation*}
Clearly, this distribution is unique.\ Thus, by Theorems 3.2.6 and 3.3.1 in \cite{daprato_zabczyk_1996}, the ergodic limit holds.
\end{proof}

\subsection{Approximation result}
Our aim is to use the solution of the ergodic MFG as an approximating solution of the $N$-player game. 
To this end, we enforce the following condition.
\begin{assumption}
	\label{Ass 4.2}
    There exists $C>0$ such that,
        $
            |\pi(x,\theta_{1})-\pi(x,\theta_{2})|\leq C(1+|x|^{\beta})|\theta_{1}-\theta_{2}|,
        $
        for any $\theta_{1},\theta_{2}\in \mathbb{R}_{+}$ and $x\in\mathcal{I}$.
\end{assumption}

Let $\theta^{*}$ be the unique MFG equilibrium as in Theorem \ref{Thm: existence MFGE} and  $(\alpha(\theta^{*}),\beta(\theta^{*}))=(\alpha_{i}(\theta^{*}),\beta_{i}(\theta^{*}))_{i\in\mathbb{Y}}$ be the related boundary (see (\ref{eq:(3.7)})). 
We now use the MFG equilibrium $\theta^*$ in order to construct profile strategies for the $N$-player games.
For any $N\geq 1$ and $n=1,...,N$, define the strategy $\bar{\xi}^{n}$ as the $(\alpha(\theta^{*}),\beta(\theta^{*}))$-barrier strategy for the noise $(W^{n},Y^{n})$. 
In light of the definition of $\widehat{K}$ in \eqref{remark compact set}, we have $\bar \xi ^n \in \widehat{\mathcal A}_b$.
Let $\bar{\boldsymbol{\xi}}$ be the related profile strategy; i.e., set $\bar{\boldsymbol{\xi}} : = (\bar \xi ^1 ,..., \bar \xi ^N)$.

In the spirit of \cite{CaoDianettiFerrari} and \cite{CAO2022995}, we can now state the final and main result of this section.

\begin{theorem}
    \label{idiosyn N player}
    The profile strategy $\bar{\boldsymbol{\xi}}:=(\overline{\xi}^{1},...,\overline{\xi}^{N})\in \widehat{\mathcal{A}}_{b}^{N}$ is an $\epsilon_{N}$-Nash equilibrium for the ergodic $N$-player game, with $\epsilon_{N}\to 0$ as $N\to \infty$. 
\end{theorem}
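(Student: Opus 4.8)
The plan is to show that no player can improve her payoff by more than a vanishing amount $\epsilon_{N}$ upon unilaterally deviating from the barrier profile $\bar{\boldsymbol{\xi}}$ induced by the MFG equilibrium, by comparing the $N$-player payoff of a generic player $n$ with the representative agent's ergodic payoff at the equilibrium parameter $\theta^{*}$. The crucial point is that, since every opponent $\ell\neq n$ plays the $(\alpha(\theta^{*}),\beta(\theta^{*}))$-barrier strategy, each process $(X^{\ell,\bar{\xi}^{\ell}},Y^{\ell})$ has stationary distribution $\nu^{\theta^{*}}$ by Proposition \ref{Prop 4.1}, so that the empirical interaction $\theta^{N}_{\bar{\boldsymbol{\xi}}^{-n}}(t)=F\big(\tfrac{1}{N-1}\sum_{\ell\neq n}f(X^{\ell,\bar{\xi}^{\ell}}_{t})\big)$ fluctuates around $\theta^{*}=F(\langle f,\nu^{\theta^{*}}\rangle)$ by an amount whose long-run time-average is small for large $N$; moreover barrier strategies confine all controlled states to the fixed compact interval determined by $\widehat{K}$, which makes every relevant integrand bounded.

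I would fix $n$ and an arbitrary deviation $\xi^{n}\in\widehat{\mathcal{A}}_{b}$. Since the proportional cost terms $-k_{1}\xi^{n,+}_{T}+k_{2}\xi^{n,-}_{T}$ are unaffected by replacing the interaction $\theta^{N}_{\bar{\boldsymbol{\xi}}^{-n}}(\cdot)$ with the constant $\theta^{*}$, Assumption \ref{Ass 4.2} together with the boundedness of $X^{n,\xi^{n}}$ yields a constant $C>0$, independent of $\xi^{n}$ and $T$, with
\[
\Big| \tfrac{1}{T}\mathbb{E}\Big[\textstyle\int_{0}^{T}\big(\pi(X^{n,\xi^{n}}_{t},\theta^{N}_{\bar{\boldsymbol{\xi}}^{-n}}(t))-\pi(X^{n,\xi^{n}}_{t},\theta^{*})\big)dt\Big]\Big| \;\le\; C\,\tfrac{1}{T}\mathbb{E}\Big[\textstyle\int_{0}^{T}\big|\theta^{N}_{\bar{\boldsymbol{\xi}}^{-n}}(t)-\theta^{*}\big|dt\Big].
\]
Applying Proposition \ref{Prop 4.2} to the bounded functional $(x_{\ell},i_{\ell})_{\ell\neq n}\mapsto\big|F(\tfrac{1}{N-1}\sum_{\ell\neq n}f(x_{\ell}))-\theta^{*}\big|$, the time-average on the right converges $\mathbb{P}$-a.s.\ to the deterministic constant $c_{N}:=\int\big|F(\tfrac{1}{N-1}\sum_{\ell\neq n}f(x_{\ell}))-\theta^{*}\big|\,\otimes_{\ell\neq n}\nu^{\theta^{*}}(dx_{\ell},i_{\ell})$; being uniformly bounded, its expectation converges as well by bounded convergence, so $\limsup_{T}\tfrac1T\mathbb{E}[\int_{0}^{T}|\theta^{N}_{\bar{\boldsymbol{\xi}}^{-n}}(t)-\theta^{*}|dt]=c_{N}$. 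Finally $c_{N}\to0$: writing $c_{N}=\mathbb{E}\big[\big|F(\tfrac{1}{N-1}\sum_{\ell\neq n}f(\zeta_{\ell}))-F(\mathbb{E}[f(\zeta_{1})])\big|\big]$ with $\zeta_{\ell}$ i.i.d.\ of law $\nu^{\theta^{*}}$ (so the bounded variables $f(\zeta_{\ell})$ have mean $\langle f,\nu^{\theta^{*}}\rangle$), the local Lipschitz property of $F$ on compacts (Assumption \ref{eq:Ass 2.3}) and the $L^{1}$ law of large numbers give $c_{N}=O(N^{-1/2})$.

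Elementary $\limsup$ inequalities then yield $\big|J^{n}(\xi^{n},\bar{\boldsymbol{\xi}}^{-n})-\widetilde{J}(\xi^{n})\big|\le Cc_{N}$ for every $\xi^{n}\in\widehat{\mathcal{A}}_{b}$, where $\widetilde{J}(\xi^{n}):=\limsup_{T}\tfrac1T\mathbb{E}[\int_{0}^{T}\pi(X^{n,\xi^{n}}_{t},\theta^{*})dt-k_{1}\xi^{n,+}_{T}+k_{2}\xi^{n,-}_{T}]$ is, in law, exactly the representative agent's ergodic payoff for $\xi^{n}$ at mean-field parameter $\theta^{*}$ and initial datum $(x^{n}_{0},i^{n}_{0})$ (whose randomness is immaterial by the Markov/regenerative structure, cf.\ the remark on the initial distribution). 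Hence $\widetilde{J}(\xi^{n})\le\overline{\lambda}(\theta^{*})$ for every $\xi^{n}\in\widehat{\mathcal{A}}_{b}\subset\mathcal{A}_{e}$ by the definition of $\overline{\lambda}(\theta^{*})$, while for $\xi^{n}=\bar{\xi}^{n}$ the barrier strategy attains $\widetilde{J}(\bar{\xi}^{n})=\overline{\lambda}(\theta^{*})$; this equality is a genuine limit, uniform in the initial condition over compacts, because the It\^o-formula identity produced in the proof of Proposition \ref{Prop  3.5} holds with equality for barrier strategies, $U$ and the controlled state are bounded, and the ergodic theorem for the finite-state chain $Y$ applies to $\tfrac1T\int_{0}^{T}\lambda(\theta^{*},Y_{t})dt$ (the reflection terms entering only through the combination $k_{1}\bar{\xi}^{n,+}_{T}-k_{2}\bar{\xi}^{n,-}_{T}$ already present in the payoff). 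Chaining the three estimates,
\[
J^{n}(\bar{\xi}^{n},\bar{\boldsymbol{\xi}}^{-n})\ge\widetilde{J}(\bar{\xi}^{n})-Cc_{N}=\overline{\lambda}(\theta^{*})-Cc_{N}\ge\widetilde{J}(\xi^{n})-Cc_{N}\ge J^{n}(\xi^{n},\bar{\boldsymbol{\xi}}^{-n})-2Cc_{N},
\]
valid for all $\xi^{n}\in\widehat{\mathcal{A}}_{b}$, so $\bar{\boldsymbol{\xi}}$ is an $\epsilon_{N}$-Nash equilibrium with $\epsilon_{N}:=2Cc_{N}\to0$ (the same for every $n$ by the i.i.d.\ symmetry of the opponents). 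I expect the main obstacle to be precisely this last identification $\widetilde{J}(\bar{\xi}^{n})=\overline{\lambda}(\theta^{*})$ as an equality rather than a bound, together with the attendant interchanges of $\limsup_{T}$ with expectation and the uniformity in the random initial datum: this forces one to revisit the explicit identity inside the proof of Proposition \ref{Prop  3.5} and to extract uniform-in-initial-datum convergence from the regenerative structure of Proposition \ref{Prop 4.1}, whereas the remaining ingredients (checking the hypotheses of Proposition \ref{Prop 4.2}, the bounded-convergence upgrade of the a.s.\ ergodic limit, and the law of large numbers for $c_{N}$) are routine.
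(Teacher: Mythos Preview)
Your approach is essentially the paper's: both compare $J^{n}(\xi^{n},\bar{\boldsymbol{\xi}}^{-n})$ with the payoff at the frozen parameter $\theta^{*}$, use Assumption~\ref{Ass 4.2} and the confinement of barrier-controlled states in $\widehat{K}$ to bound the discrepancy by a constant times the time-averaged $|\theta^{N}_{\bar{\boldsymbol{\xi}}^{-n}}(t)-\theta^{*}|$, evaluate this via Proposition~\ref{Prop 4.2} as an integral against $\otimes_{\ell\neq n}\nu^{\theta^{*}}$, and conclude by the local Lipschitz property of $F$ together with the law of large numbers. The paper organizes the comparison through the one-sided residual $R_{N}(\xi^{n}):=J^{n}(\xi^{n},\bar{\boldsymbol{\xi}}^{-n})-J^{n}(\xi^{n},\theta^{*})$ and the identity $J^{n}(\bar{\xi}^{n},\bar{\boldsymbol{\xi}}^{-n})-J^{n}(\xi^{n},\bar{\boldsymbol{\xi}}^{-n})=R_{N}(\bar{\xi}^{n})-R_{N}(\xi^{n})+\big(J^{n}(\bar{\xi}^{n},\theta^{*})-J^{n}(\xi^{n},\theta^{*})\big)$, whereas you go directly for the two-sided bound $|J^{n}(\xi^{n},\bar{\boldsymbol{\xi}}^{-n})-\widetilde{J}(\xi^{n})|\le Cc_{N}$; your route is marginally tidier because it treats both inequalities at once and applies the ergodic theorem only to the opponents after first absorbing the $(1+|X^{n,\xi^{n}}_{t}|^{\beta})$ factor by compactness.

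The obstacle you flag---the equality $\widetilde{J}(\bar{\xi}^{n})=\bar{\lambda}(\theta^{*})$---is not a genuine difficulty: since the solution to $\mathbf{SP}(\alpha(\theta^{*}),\beta(\theta^{*}),x,i)$ is obtained by a Picard iteration (Proposition~\ref{eq:Prop B.1}) and is therefore a measurable functional of the driving noise, the pair $(X^{n,\bar{\xi}^{n}},\bar{\xi}^{n})$ has the same law as the MFG-optimal pair $(X^{\xi^{*}(\theta^{*})},\xi^{*}(\theta^{*}))$, so $\widetilde{J}(\bar{\xi}^{n})$ equals the representative agent's optimal value $\bar{\lambda}(\theta^{*})$ by Proposition~\ref{Prop  3.5} (the paper records exactly this identification). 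The random initial datum is handled by the remark on initial distributions; no further uniformity or revisiting of the It\^o identity is needed.
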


\begin{proof}
We first introduce some notation.
Take barriers $(\alpha,\beta)=(\alpha_{i},
\beta_{i})_{i\in\mathbb{Y}}\in \widehat{K}^{2d}$, with $\widehat{K}$ as in (\ref{remark compact set}).
 Denote by $(X^{n,\xi^{n}},\xi^{n})$ the solution to $\mathbf{SP}(\alpha,\beta,x,i)$ for the noise $(W^{n},Y^{n})$ of player $n$, and denote by  $(\tilde{X},\tilde{\xi})$ the solution to  $\mathbf{SP}(\alpha,\beta,x,i)$ for the noise $(W,Y)$ of the MFG.
 Notice that the solution to $\mathbf{SP}(\alpha,\beta,x,i)$ can be constructed by a Picard iteration (see Proposition \ref{eq:Prop B.1}) and thus it is progressively measurable with respect to the noises.
 This implies that such a solution is unique in distribution, 
 so that the processes $(X^{n,\xi^{n}},\xi^n)$ and $(\tilde X ^{n,\xi^{n}}, \tilde \xi ^n)$ have the same law, and
 \begin{align}
    \label{eq identity J uniqueness in law}
 J^n(\xi^n,\theta^{*}) &:= \limsup_{T\uparrow\infty}  \frac{1}{T} \mathbb{E}\bigg[ \int_{0}^{T}  \pi(X^{n,\xi^{n}}_{t},\theta^{*})dt-k_{1}\xi^{n,+}_{T}+k_{2}\xi^{n,-}_{T}\bigg]  \\ \notag
& =\limsup_{T\uparrow\infty}  \frac{1}{T} \mathbb{E}\bigg[ \int_{0}^{T}  \pi(\tilde X ^{n,\xi^{n}}_{t},\theta^{*})dt-k_{1}\tilde{\xi}^{n,+}_{T}+k_{2} \tilde{\xi}^{n,-}_{T}\bigg] =: J(\tilde \xi, \theta^* ). \notag
 \end{align} 

Next, introduce the error estimate between the functionals of the $N$-player game and the MFG. 
 For  $N\in\mathbb{N}$ and $1\leq n\leq N$, set
$$
R_{N}(\xi^n):=J^n(\xi^n,\bar{\xi}^{-n})-J^n(\xi^n,\theta^{*}),\quad \xi^n \in \widehat{\mathcal{A}} _b.
$$
Since the control $\xi(\theta^{*})$ is optimal for $\theta^{*}$ (cf. Proposition \ref{Prop 3.5}),   using \eqref{eq identity J uniqueness in law}, we have that
     \begin{align*}
        J^{n}(\bar{\xi}^{n},\bar{\xi}^{-n}) -J^{n}(\xi^n,\bar{\xi}^{-n})
        & =R_{N}(\bar{\xi}^{n})-R_{N}(\xi^n)+J^n(\bar{\xi}^{n},\theta^{*})- J^n(\xi^n,\theta^{*}) \\
        & =R_{N}(\bar{\xi}^{n})-R_{N}(\xi^n)+J(\xi(\theta^{*}),\theta^{*})- J(\tilde \xi ^n,\theta^{*}) \\
        & \geq R_{N}(\bar{\xi}^{n})-R_{N}(\xi^n), \; \text{ for any $\xi ^n \in \widehat{\mathcal{A}}_{b}$.}
     \end{align*}
Therefore, in order to complete the proof it is sufficient to show that
\begin{equation}\label{eq limit of R N}
    \lim_{N\uparrow\infty} \sup_{\xi ^n \in\widehat{\mathcal{A}}_{b}} R_{N}(\xi^n )=0.
\end{equation}
    
By using that $\overline{\lim}_{T\uparrow\infty}\big(\alpha_{T}+\beta_{T}\big)\geq \overline{\lim}_{T\uparrow \infty}(\alpha_{T})+\underline{\lim}_{T\uparrow \infty}(\beta_{T}) $, write
\begin{equation}\label{resid}
\begin{aligned}
         R_{N}(\xi^n) 
    & \leq \limsup_{T\uparrow\infty}  \frac{1}{T}\mathbb{E}\bigg[ \int_{0}^{T}  \Big( \pi(X^{n,\xi^{n}}_{t},\theta^{N}_{\boldsymbol{\bar{\xi}^{-n}}} (t)) - \pi(X^{n,\xi^{n}}_{t},\theta^{*}) \Big) dt \bigg].  
\end{aligned}
\end{equation}
Next, by Proposition \ref{Prop 4.1}, there exists a stationary distribution $\nu^{\xi^{n}}$ for the process $(X^{n,\xi^{n}}_{t},Y_{t})_{t\geq 0}$.
Moreover, Proposition \ref{Prop 4.2} ensures that the process $\big( (X^{n,\xi^{n}}, Y^n),  \{(X^{\ell,\bar{\xi}^{\ell}},Y^{\ell})_{t\geq 0}\}_{\ell\neq n} \big)$ admits an ergodic distribution  $\otimes_{\ell\neq n}\nu^{\ell} \otimes \nu^{\xi^{n}}$ with support in the compact set $(\widehat K \times \mathbb Y )^N$.
Thus, by continuity of the functions $f,F$ and $\pi$, Proposition \ref{Prop 4.2} also allows to rewrite \eqref{resid} as
\begin{equation}
\label{res}
 R_{N}(\xi^n) \nonumber \leq \int_{\mathcal{O}} \int_{\mathcal{O}^{N-1}}\Big( \pi \Big( x_{n}, F \big( \sum_{\ell \ne n} \frac{f(x_\ell)}{N-1} \big) \Big) - \pi( x_n, \theta^{*}) \Big) \otimes_{\ell\neq n}\nu^{\ell}(dx_{\ell},i_{\ell})\otimes \nu^{\xi^{n}}(dx_{n},i_{n}).    
\end{equation}

We proceed by further estimating $\sup_{\xi^n \in \widehat{A} _b} R_{N}(\xi^n)$.
From the latter inequality, since $\otimes_{\ell\neq n}\nu^{\ell} \otimes \nu^{\xi^{n}}$ has compact support, we can use Assumption \ref{Ass 4.2} to obtain 
\begin{equation}\label{eq term in epsilon approximation}
\begin{aligned}
    R_{N}(\xi^n)
    &\leq C \int_{\mathcal{O}^N}  (1+|x_{n}|^{\beta})\bigg|F\bigg( \sum_{\ell \ne n} \frac{f(x_\ell)}{N-1} \bigg)  -F\big( \langle f,\nu^{\theta^{*}}\rangle\big) \bigg|\otimes_{\ell\neq n}\nu^{\ell}(dx_{\ell},i_{\ell})\otimes\nu^{\xi^{n}}(dx_{n},i_{n})  \\
    &\leq C_{\widehat K} \bigg( \int_{\mathcal{O}^{N-1}}\bigg|F\bigg( \sum_{\ell \ne n} \frac{f(x_\ell)}{N-1} \bigg) -F\big( \langle f,\nu^{\theta^{*}}\rangle\big) \bigg|\otimes_{\ell\neq n}\nu^{\ell}(dx_{\ell},i_{\ell})\bigg),
\end{aligned}
\end{equation}
where the constant $C_{\widehat K}$ depends only on the compact $\widehat K$, since $\nu^{\xi^n}$ ha support in the compact $\widehat K$ ($\xi ^n \in \widehat A _b$ as in \eqref{eq restricted strategies}).
Next, using the local Lipschitz property of $F$ (cf. Assumption (\ref{eq:Ass 2.3-2-b})-(\ref{eq:Ass 2.3})), and again the fact that  $\nu^{\ell}$ are supported in the compact $\widehat K \times \mathbb Y$, we obtain
\begin{align*}
     \sup_{\xi^n \in \widehat{A} _b} R_{N}(\xi) 
    &  \leq C_{\widehat K} \int_{\mathcal{O}^{N-1}}\bigg(1+\sum_{\ell\neq n}\frac{|f(x_\ell)|}{N-1}+\langle f,\nu^{\theta^{*}}\rangle\bigg)^{\frac{1}{\beta}-1} \bigg|\sum_{\ell \ne n} \frac{f(x_\ell)}{N-1} - \langle f,\nu^{\theta^{*}}\rangle\bigg|\underset{\ell\neq n}{\otimes} \nu^{\ell}(dx_{\ell},i_{\ell}) \\
    &  \leq \widehat{C}_{\widehat K} \int_{\mathcal{O}^{N-1}}\bigg|\frac{1}{N-1}\sum_{\ell \ne n} f(x_\ell) - \langle f,\nu^{\theta^{*}}\rangle\bigg|\otimes_{\ell\neq n}\nu^{\ell}(dx_{\ell},i_{\ell}).
\end{align*}
Finally, by using the strong law of large numbers (cf.\ Theorem 4.23 in \cite{kallenberg2002foundations}) and the dominated convergence theorem, a limit as $N \to \infty$ in the latter inequality leads to \eqref{eq limit of R N}. 
This completes the proof of the theorem.
\end{proof}

We conclude this section with a technical remark on the set of deviations (cf.\ \eqref{eq restricted strategies}). 
\begin{remark}
    \label{remark extension admissible deviations}
    Limiting players to use restricted deviations (see \eqref{eq restricted strategies}) is a way to get a uniform bound in the error term $\epsilon_N$; this is used in \eqref{eq term in epsilon approximation} in the proof of Theorem \ref{idiosyn N player}.
    It seems possible to extend Theorem \ref{idiosyn N player} also to the case in which players' admissible deviations are not  assumed to be bounded a priori; that is, to the case case in which $\widehat{\mathcal{A}}_{b}^{N}$ in Definition \ref{Nash eq} is replaced with $\widehat{\mathcal{A}}^{N}$.
    In order to do so, one has to show that, when player $n$'s opponents choose reflection strategies $\bar{\boldsymbol{\xi}}^{-n} \in \widehat{\mathcal{A}}_{b}^{N-1}$ of the MFG equilibrium, player $n$'s optimal strategy over $\widehat{\mathcal{A}}_e$ is an element of $\widehat{\mathcal{A}}_{b}$ (i.e., without restricting to it ex ante). 
    However this intermediate result comes with a certain technical cost, since one has to: 
    (1) Repeat the analysis of Section 3 by replacing the constant parameter $\theta$ by the process $\{ \theta^{N}_{ \bar{\boldsymbol{\xi}}^{-n} }(t)\}_t$ defined in \eqref{eq:thetaN}; 
    (2) Show a monotonicity of optimal reflection boundaries with respect the stochastic parameter $\theta^{N}_{\bar{\boldsymbol{\xi}}^{-n}}$; 
    (3) Obtain bounds for the optimal reflection strategies in the spirit of \eqref{eq:(4.8)} and \eqref{eq:(4.10)}.
\end{remark}

\appendix

\section{Results on the Optimal Stopping Game}
\label{Appendix A}

    \begin{proof}[Proof of Theorem \ref{eq:Thm 3.1}]
    \label{Proof of Theorem 3.1}
    Let $(x,i,\theta)\in\mathcal{O}\times \mathbb{R}_{+}$ and $\tau,\sigma\in\mathcal{T}$ given and fixed. We aim at applying Theorem 2.1 in \cite{doi:10.1137/S0040585X97983821}, and for this we first notice that 
    
    \begin{multline}
        \widehat{J}(x,i;\tau,\sigma,\theta) 
             = \widehat{\mathbb{E}}_{(x,i)}\bigg[ \int_{0}^{\tau\wedge \sigma}e^{\int_{0}^{t}b_{x}(\widehat{X}_{s},Y_{s})ds}\pi_{x}(\widehat{X}_{t},\theta)dt 
              +k_{1}e^{\int_{0}^{\tau}b_{x}(\widehat{X}_{s},Y_{s})ds}\boldsymbol{1}_{\{\tau\leq\sigma\}} \nonumber \\
              +k_{2}e^{\int_{0}^{\sigma}b_{x}(\widehat{X}_{s},Y_{s})ds}\boldsymbol{1}_{\{\sigma<\tau\}} \bigg]  
             =\widehat{\mathbb{E}}_{(x,i)}\bigg[ \int_{0}^{\infty}e^{\int_{0}^{t}b_{x}(\widehat{X}_{s},Y_{s})ds}\pi_{x}(\widehat{X}_{t},\theta)dt \nonumber \\
             -\int_{\tau\wedge \sigma}^{\infty}e^{\int_{0}^{t}b_{x}(\widehat{X}_{s},Y_{s})dt}\pi_{x}(\widehat{X}_{t},\theta)dt  
             +k_{1}e^{\int_{0}^{\tau}b_{x}(\widehat{X}_{s},Y_{s})ds}\boldsymbol{1}_{\{\tau\leq\sigma\}}+k_{2}e^{\int_{0}^{\sigma}b_{x}(\widehat{X}_{s},Y_{s})ds}\boldsymbol{1}_{\{\sigma<\tau\}} \bigg] \nonumber \\
            =\widehat{\mathbb{E}}_{(x,i)}\bigg[ \int_{0}^{\infty}e^{\int_{0}^{t}b_{x}(\widehat{X}_{s},Y_{s})ds}\pi_{x}(\widehat{X}_{t},\theta)dt\bigg] -\widehat{\mathbb{E}}_{(x,i)}\bigg[\widehat{\mathbb{E}}_{(x,i)}\bigg[ \int_{\tau\wedge \sigma}^{\infty}e^{\int_{0}^{t}b_{x}(\widehat{X}_{s},Y_{s})ds}\pi_{x}(\widehat{X}_{t},\theta)dt \bigg|\mathcal{F}_{\tau\wedge \sigma}\bigg]\bigg] \nonumber \\
            +\widehat{\mathbb{E}}_{(x,i)}\bigg[k_{1}e^{\int_{0}^{\tau}b_{x}(\widehat{X}_{s},Y_{s})ds}\boldsymbol{1}_{\{\tau\leq\sigma\}}
            +k_{2}e^{\int_{0}^{\sigma}b_{x}(\widehat{X}_{s},Y_{s})ds}\boldsymbol{1}_{\{\sigma<\tau\}} \bigg].
    \end{multline}
    Then, defining 
        \[
            G_{0}(x,i;\theta):=\widehat{\mathbb{E}}_{(x,i)}\bigg[\int_{0}^{\infty}e^{\int_{0}^{t}b_{x}(\widehat{X}_{s},Y_{s})ds}\pi_{x}(\widehat{X}_{t},\theta)dt \bigg],
        \]
        by the strong Markov property of $(\widehat{X}_{t},Y_{t})_{t\geq 0}$ we obtain
        \begin{multline}
            \widehat{J}(x,i;\tau,\sigma,\theta)=G_{0}(x,i;\theta)-\widehat{\mathbb{E
            }}_{(x,i)}\bigg[e^{\int_{0}^{\tau\wedge \sigma}b_{x}(\widehat{X}_{s},Y_{s})ds}G_{0}(\widehat{X}_{\tau\wedge\sigma},Y_{\tau\wedge\sigma};\theta) \bigg] \\
            +\widehat{\mathbb{E}}_{(x,i)}\bigg[k_{1}e^{\int_{0}^{\tau}b_{x}(\widehat{X}_{s},Y_{s})ds}\boldsymbol{1}_{\{\tau\leq\sigma\}}+k_{2}e^{\int_{0}^{\sigma}b_{x}(\widehat{X}_{s},Y_{s})ds}\boldsymbol{1}_{\{\sigma<\tau\}} \bigg].
        \end{multline}
        Therefore, 
        \begin{multline}
            \widehat{J}(x,i;\tau,\sigma,\theta)=G_{0}(x,i;\theta)+\widehat{\mathbb{E}}_{(x,i)}\bigg[ e^{\int_{0}^{\tau}b_{x}(\widehat{X}_{s},Y_{s})ds}\big(k_{1}-G_{0}(\widehat{X}_{\tau},Y_{\tau};\theta) \big)\boldsymbol{1}_{\{\tau\leq\sigma\}}\bigg] \\
            +\widehat{\mathbb{E}}_{(x,i)}\bigg[e^{\int_{0}^{\sigma}b_{x}(\widehat{X}_{s},Y_{s})ds}\big(k_{2}-G_{0}(\widehat{X}_{\sigma},Y_{\sigma};\theta) \big)\boldsymbol{1}_{\{\sigma<\tau\}} \bigg];
        \end{multline}
        that is,
        \[
          \widehat{J}(x,i;\tau,\sigma,\theta)=G_{0}(x,i;\theta)+\widehat{\mathbb{E}}_{(x,i,1)}\big[ G_{1}(\bar{X}_{t};\theta)\boldsymbol{1}_{\{\tau\leq\sigma\}}+G_{2}(\bar{X}_{t};\theta)\boldsymbol{1}_{\{\sigma<\tau\}} \big],
        \]
        where we have set $G_{1}(x,i,z;\theta):=z(k_{1}-G_{0}(x,i;\theta))$, $G_{2}(x,i,z;\theta):=z(k_{2}-G_{0}(x,i;\theta))$, we have introduced the 3-dimensional right-continuous strong Markov process
        \begin{equation}
            \bar{X}_{t}:=(\widehat{X}_{t},Y_{t},Z_{t}),
        \end{equation}
        with $Z_{t}:=z\cdot e^{\int_{0}^{t}b_{x}(\widehat{X}_{s},Y_{s})ds}$, and $\widehat{\mathbb{E}}_{(x,i,1)}$ is the expectation with the respect to $\widehat{\mathbb{P}}_{(x,i,1)}[ \ \cdot \ ]:=\widehat{\mathbb{P}}[ \ \cdot\ |\widehat{X}_{0}=x,Y_{0}=i,Z_{0}=1 ]$. We observe that by Condition (\ref{eq:(2.1-3)}) in Assumption \ref{eq:Ass 2.1} and Condition (\ref{eq:2.2-5}) in Assumption \ref{eq:Ass 2.2}, 
        \[
            \widehat{\mathbb{E}}_{(x,i,1)}\big[\sup_{t\geq 0}\big|G_{i}(\bar{X}_{t};\theta)\big|\big]<\infty,\; i=0,1,2,
        \]
        where as well as, $G_{2}(\bar{X}_{t};\theta)\leq G_{1}(\bar{X}_{t};\theta)$ for any $t\geq 0$. Also, 
        \[
            \lim_{t\uparrow\infty}G_{1}(\bar{X}_{t};\theta)=0\;\text{ and }\lim_{t\uparrow\infty}G_{2}(\bar{X}_{t};\theta)=0,\quad \widehat{\mathbb{P}}_{(x,i,1)}\text{-a.s.}
        \]
        Hence, we can apply Theorem 2.1 from \cite{doi:10.1137/S0040585X97983821} and  complete the proof.
        \end{proof}
    \begin{proof}[Proof of Proposition \ref{eq:Prop 3.2}]
    \label{Proof of Proposition 3.2}
         We argue by contradiction and we suppose that $\mathcal{S}^{\theta}_{\sup}=\emptyset$. This implies that $\sigma^{*}=\infty,\;\widehat{\mathbb{P}}_{(x,i)}$-a.s. and for any $(x,i)\in\mathcal{O}$, and we thus obtain that
         \begin{align*}
            k_{2} & <v(x,i;\theta)=\inf_{\tau\in\mathcal{T}}\widehat{\mathbb{E}}_{(x,i)}\bigg[ \int_{0}^{\tau}
            e^{\int_{0}^{t}b_{x}(\widehat{X}_{s},Y_{s})ds}\pi_{x}(\widehat{X}_{t},\theta)dt+k_{2}e^{\int_{0}^{\tau}b_{x}(\widehat{X}_{s},Y_{s})ds}\bigg] \\
             & \leq \widehat{\mathbb{E}}_{(x,i)}\bigg[ \int_{0}^{\infty}
            e^{\int_{0}^{t}b_{x}(\widehat{X}_{s},Y_{s})ds}\pi_{x}(\widehat{X}_{t},\theta)dt\bigg].
        \end{align*}
        By Condition (\ref{eq:2.2-4}) and Condition (\ref{eq:2.2-5}) in Assumption \ref{eq:Ass 2.2} we reach a contradiction with $k_{2}>0$ by taking $x\uparrow \overline{x}$. The proof of $\mathcal{S}^{\theta}_{\inf}\neq \emptyset$ follows by similar arguments.
    \end{proof}
    The next result, which will be used in the proof of Proposition \ref{eq:Prop 3.3} below, provides the so-called \textit{semiharmonic characterization} of $v$ (see \cite{doi:10.1137/S0040585X97983821}). Its proof is direct consequence of Theorem 2.1 in \cite{doi:10.1137/S0040585X97983821}.
    \begin{proposition}
        \label{Prop A.1}
        For any $(x,i)\in\mathcal{O}$, we have under $\widehat{\mathbb{P}}_{(x,i)}$ that
        \begin{enumerate}
            \item $\bigg( \int_{0}^{t\wedge \tau^{*}(\theta)}\pi_{x}(\widehat{X}_{t},\theta)dt+e^{\int_{0}^{t\wedge \tau^{*}(\theta)}b_{x}(\widehat{X}_{t},Y_{t})dt}v(\widehat{X}_{t\wedge \tau^{*}(\theta)},Y_{t\wedge \tau^{*}(\theta)}) \bigg)_{t\geq 0}$ is a $\mathbb{F}$-submartingale;
            \item $\bigg( \int_{0}^{t\wedge \sigma^{*}(\theta)}\pi_{x}(\widehat{X}_{t},\theta)dt+e^{\int_{0}^{t\wedge \sigma^{*}(\theta)}b_{x}(\widehat{X}_{t},Y_{t})dt}v(\widehat{X}_{t\wedge \sigma^{*}(\theta)},Y_{t\wedge \sigma^{*}(\theta)}) \bigg)_{t\geq 0}$ is a $\mathbb{F}$-supermartingale;
            \item \label{Prop A.1-3} $\bigg( \int_{0}^{t\wedge \tau^{*}(\theta)\wedge \sigma^{*}(\theta)}\pi_{x}(\widehat{X}_{t},\theta)dt+e^{\int_{0}^{t\wedge \tau^{*}(\theta)\wedge\sigma^{*}(\theta)}b_{x}(\widehat{X}_{t},Y_{t})dt}v(\widehat{X}_{t\wedge \tau^{*}(\theta)\wedge \sigma^{*}(\theta)},Y_{t\wedge \tau^{*}(\theta)\wedge \sigma^{*}(\theta)}) \bigg)_{t\geq 0}$ is a $\mathbb{F}$-martingale.
        \end{enumerate}
    \end{proposition}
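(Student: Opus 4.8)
The plan is to bootstrap from the reduction already carried out in the proof of Theorem \ref{eq:Thm 3.1} and then to read off the assertions from Theorem 2.1 in \cite{doi:10.1137/S0040585X97983821}. Recall from that proof that, writing $D_t:=e^{\int_0^t b_x(\widehat X_s,Y_s)\,ds}$ and $G_0(x,i):=\widehat{\mathbb{E}}_{(x,i)}\big[\int_0^\infty D_t\,\pi_x(\widehat X_t,\theta)\,dt\big]$, the payoff takes the form
\[
 \widehat J(x,i;\tau,\sigma,\theta)=G_0(x,i)+\widehat{\mathbb{E}}_{(x,i,1)}\big[G_1(\bar X_\tau)\boldsymbol{1}_{\{\tau\le\sigma\}}+G_2(\bar X_\sigma)\boldsymbol{1}_{\{\sigma<\tau\}}\big],
\]
with $\bar X_t=(\widehat X_t,Y_t,Z_t)$, $Z_t=z\,D_t$, $G_1(x,i,z)=z(k_1-G_0(x,i))$, $G_2(x,i,z)=z(k_2-G_0(x,i))$, and that $\bar X$ is a right-continuous strong Markov process for which $\widehat{\mathbb{E}}_{(x,i,1)}[\sup_t|G_j(\bar X_t)|]<\infty$, $G_2\le G_1$, and $G_j(\bar X_t)\to0$ a.s.\ hold. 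These are exactly the hypotheses under which Theorem 2.1 in \cite{doi:10.1137/S0040585X97983821} applies to the reduced game; I would write $\widehat V(x,i,z)$ for its value started from $(x,i,z)$, observe that linearity of $G_1,G_2$ in $z$ gives the scaling $\widehat V(x,i,z)=z\,\widehat V(x,i,1)$, and record that $v(x,i;\theta)=G_0(x,i)+\widehat V(x,i,1)$ while $\tau^*,\sigma^*$ are the first-entry times of $(\widehat X,Y)$ into $\mathcal S^\theta_{\inf}$, $\mathcal S^\theta_{\sup}$.

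First I would invoke the additional output of Theorem 2.1 in \cite{doi:10.1137/S0040585X97983821}: under $\widehat{\mathbb{P}}_{(x,i,1)}$ the process $t\mapsto\widehat V(\bar X_{t\wedge\tau^*})$ is an $\mathbb{F}$-submartingale, $t\mapsto\widehat V(\bar X_{t\wedge\sigma^*})$ an $\mathbb{F}$-supermartingale, and $t\mapsto\widehat V(\bar X_{t\wedge\tau^*\wedge\sigma^*})$ an $\mathbb{F}$-martingale. Since $z=1$ makes $Z_t=D_t$, the scaling property yields $\widehat V(\bar X_t)=D_t\,\widehat V(\widehat X_t,Y_t,1)=D_t\big(v(\widehat X_t,Y_t;\theta)-G_0(\widehat X_t,Y_t)\big)$. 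Next I would set $N_t:=\int_0^t D_s\pi_x(\widehat X_s,\theta)\,ds+D_t\,G_0(\widehat X_t,Y_t)$ and note, using the flow identity $D_{t+u}=D_t\,e^{\int_t^{t+u}b_x(\widehat X_s,Y_s)\,ds}$ and the Markov property of $(\widehat X,Y)$, that $N_t=\widehat{\mathbb{E}}_{(x,i)}\big[\int_0^\infty D_s\pi_x(\widehat X_s,\theta)\,ds\mid\mathcal F_t\big]$; this is a uniformly integrable $\mathbb{F}$-martingale because $D_s\le e^{-cs}$ by Assumption \ref{eq:Ass 2.1}-(\ref{eq:(2.1-3)}) and hence $\widehat{\mathbb{E}}_{(x,i)}\big[\int_0^\infty D_s|\pi_x(\widehat X_s,\theta)|\,ds\big]<\infty$ by Assumption \ref{eq:Ass 2.2}-(\ref{eq:2.2-5}). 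Combining the two displayed identities, for $\rho\in\{\tau^*,\sigma^*,\tau^*\wedge\sigma^*\}$ one gets
\[
 \int_0^{t\wedge\rho} D_s\pi_x(\widehat X_s,\theta)\,ds+D_{t\wedge\rho}\,v(\widehat X_{t\wedge\rho},Y_{t\wedge\rho};\theta)=N_{t\wedge\rho}+\widehat V(\bar X_{t\wedge\rho}),
\]
and since $N_{t\wedge\rho}$ is a martingale while $\widehat V(\bar X_{t\wedge\rho})$ is respectively a sub-, super-, and a martingale for the three choices of $\rho$, the three claims follow (the orientation being the expected one: before $\tau^*$ the minimizer is still waiting, hence the submartingale property, and symmetrically for $\sigma^*$).

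The only delicate point is not a calculation but the verification that the reduced game genuinely fits the framework of \cite{doi:10.1137/S0040585X97983821} --- right-continuity and the strong Markov property of $\bar X$, the bound $\widehat{\mathbb{E}}_{(x,i,1)}[\sup_t|G_j(\bar X_t)|]<\infty$, the ordering $G_2\le G_1$, and the vanishing at infinity --- all of which were already established inside the proof of Theorem \ref{eq:Thm 3.1}; here they only need to be invoked, so the remaining work is the bookkeeping of the multiplicative functional $D$ through the scaling identity $\widehat V(x,i,z)=z\widehat V(x,i,1)$ together with the extraction of the martingale $N$ carrying the running cost. I expect no genuinely new estimate to be required.
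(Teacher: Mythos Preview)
Your proposal is correct and follows the same route as the paper, which simply states that the result is a ``direct consequence of Theorem 2.1 in \cite{doi:10.1137/S0040585X97983821}.'' You have made explicit the bookkeeping that the paper leaves to the reader---namely the scaling identity $\widehat V(x,i,z)=z\,\widehat V(x,i,1)$ and the extraction of the uniformly integrable martingale $N_t=\int_0^t D_s\pi_x(\widehat X_s,\theta)\,ds+D_tG_0(\widehat X_t,Y_t)$ so that the stopped processes decompose as $N_{t\wedge\rho}+\widehat V(\bar X_{t\wedge\rho})$---but this is exactly the translation implicit in the phrase ``direct consequence,'' and no different idea is involved.
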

    
    \begin{proof}[Proof of Proposition \ref{eq:Prop 3.3}]
     \label{Proof of Proposition 3.3}
     The proof is organized in several steps.
     
        \textbf{Step 1:} We start by proving that $v(\cdot,i;\theta)\in C^{0}(\mathcal{I})$, for any $(i,\theta)\in\mathbb{Y}\times\mathbb{R}_{+}$. Letting $(x,i,\theta)\in\mathcal{O}\times \mathbb{R}_{+}$ and $\epsilon\in(0,1)$, we recall the optimal stopping times $\tau^{*}(\theta)=\inf\{t\geq 0:\widehat{X}_{t}\leq \alpha_{Y_{t}}(\theta)\}$, $\sigma^{*}(\theta)=\inf\{t\geq 0:\widehat{X}_{t}\geq \beta_{Y_{t}}(\theta)\}$ $\widehat{\mathbb{P}}_{(x,i)}$-a.s. and define $\sigma^{*}_{\epsilon}(\theta):=\inf\{t\geq 0:\widehat{X}_{t}\geq \beta_{Y_{t}}(\theta)\}$ $\widehat{\mathbb{P}}_{(x+\epsilon,i)}$-a.s., analogously $\tau^{*}_{\epsilon}(\theta)$. Then we have that
        \begin{align}
            \label{first ineq with est}
            \big|v(x+\epsilon,i;\theta)&-v(x,i;\theta) \big| \notag \\ 
            &\leq \big|\widehat{J}(x+\epsilon,i;\tau^{*}(\theta),\sigma^{*}_{\epsilon}(\theta),\theta)-\widehat{J}(x,i;\tau^{*}(\theta),\sigma^{*}_{\epsilon}(\theta),\theta)\big| \nonumber \\ 
            &\leq\bigg|\widehat{\mathbb{E}}\bigg[ \int_{0}^{\tau^{*}(\theta)\wedge \sigma^{*}_{\epsilon}(\theta)}\big(e^{\int_{0}^{t}b_{x}(\widehat{X}^{x+\epsilon}_{s},Y^{i}_{s})ds}\pi_{x}(\widehat{X}^{x+\epsilon}_{t},\theta)-e^{\int_{0}^{t}b_{x}(\widehat{X}^{x}_{s},Y^{i}_{s})ds}\pi_{x}(\widehat{X}^{x}_{t},\theta)\big)ds \bigg]\bigg|
            \nonumber \\
            &\quad +k_{1}\bigg|\widehat{\mathbb{E}}\bigg[ \bigg(e^{\int_{0}^{\tau^{*}(\theta)}b_{x}(\widehat{X}^{x+\epsilon}_{s},Y^{i}_{s})ds}-e^{\int_{0}^{\tau^{*}(\theta)}b_{x}(\widehat{X}^{x}_{s},Y^{i}_{s})ds}\bigg)\boldsymbol{1}_{\{\tau^{*}(\theta)<\sigma^{*}_{\epsilon}(\theta)\}} \bigg] \bigg| \notag \\
            &\quad +k_{2}\bigg|\widehat{\mathbb{E}}\bigg[ \bigg(e^{\int_{0}^{\sigma^{*}_{\epsilon}(\theta)}b_{x}(\widehat{X}^{x+\epsilon}_{s},Y^{i}_{s})ds}-e^{\int_{0}^{\sigma^{*}_{\epsilon}(\theta)}b_{x}(\widehat{X}^{x}_{s},Y^{i}_{s})ds}\bigg)\boldsymbol{1}_{\{\sigma^{*}_{\epsilon}(\theta)<\tau^{*}(\theta)\}} \bigg]\bigg| \notag\\
            &=:A^{\epsilon}+k_{1}B^{\epsilon}(\tau^{*}(\theta))+k_{2}B^{\epsilon}(\sigma^{*}_{\epsilon}(\theta)).
        \end{align}
        We bound the first term as follows, 
        \begin{align}
        \label{eq:(A.8)}
             A^{\epsilon}&\leq \widehat{\mathbb{E}}\bigg[ \int_{0}^{\infty}e^{\int_{0}^{t}b_{x}(\widehat{X}^{x+\epsilon}_{s},Y^{i}_{s})ds}\big|\pi_{x}(\widehat{X}^{x+\epsilon}_{t},\theta)-\pi_{x}(\widehat{X}^{x}_{t},\theta)\big|ds\bigg]   \\
            &\quad+\widehat{\mathbb{E}}\bigg[\int_{0}^{\infty}  \big| e^{\int_{0}^{t}b_{x}(\widehat{X}^{x+\epsilon}_{s},Y^{i}_{s})ds}-e^{\int_{0}^{t}b_{x}(\widehat{X}^{x}_{s},Y^{i}_{s})ds} \big|\cdot \big| \pi_{x}(\widehat{X}^{x}_{t},\theta)\big|dt\bigg]. \notag
        \end{align}
        Thanks to Condition (\ref{eq:(2.1-3)}) in Assumption \ref{eq:Ass 2.1}, (\ref{eq:(A.8)}) gives
        \begin{align}
        \label{eq:(A.9)}
            A^{\epsilon}&\leq \widehat{\mathbb{E}}\bigg[ \int_{0}^{\infty}e^{-ct}\big|\pi_{x}(\widehat{X}^{x+\epsilon}_{t},\theta)-\pi_{x}(\widehat{X}^{x}_{t},\theta)\big|dt \bigg]  \\
            &\quad +\widehat{\mathbb{E}}\bigg[ \int_{0}^{\infty}\big|e^{\int_{0}^{t}b_{x}(\widehat{X}^{x+\epsilon}_{s},Y^{i}_{s})ds}-e^{\int_{0}^{t}b_{x}(\widehat{X}^{x}_{s},Y^{i}_{s})ds} \big|\cdot\big|\pi_{x}(\widehat{X}^{x}_{t},\theta)\big|dt \bigg]. \notag
        \end{align}
        Following the same arguments as in Theorem 38 from Chapter V.7 in \cite{protter2005stochastic}, we can prove that $x\mapsto \widehat{X}^{x}_{t}$ is continuous $\widehat{\mathbb{P}}\otimes dt$-a.s., so that
        \begin{equation}
        \label{eq:(A.10)}
            \big|e^{\int_{0}^{t}b_{x}(\widehat{X}^{x+\epsilon}_{s},Y^{i}_{s})ds}-e^{\int_{0}^{t}b_{x}(\widehat{X}^{x}_{s},Y^{i}_{s})ds} \big|\to 0,\; \epsilon\downarrow 0 \; \widehat{\mathbb{P}}\otimes dt\text{-a.s.}
        \end{equation}
        Furthermore,
        \begin{equation}
        \label{eq:(A.11)}
            \big|e^{\int_{0}^{t}b_{x}(\widehat{X}^{x+\epsilon}_{s},Y^{i}_{s})ds}-e^{\int_{0}^{t}b_{x}(\widehat{X}^{x}_{s},Y^{i}_{s})ds} \big|\leq 2e^{-ct}.
        \end{equation}
        Hence, combining (\ref{eq:(A.10)}), (\ref{eq:(A.11)}) with Condition (\ref{eq:2.2-5}) in Assumption \ref{eq:Ass 2.2} we obtain that the second expectation on the right-hand side of (\ref{eq:(A.9)}) vanishes as $\epsilon\downarrow 0$. On the other hand, one can prove a version of comparison principle for hybrid-SDEs \footnote{ Writing $[0,\infty)=\bigcup_{n=1}^{\infty}[\tau_{n-1},\tau_{n})$, where $\tau_{n}$ is a $\mathbb{F}$-stopping when $Y$ makes a jump, thus applying Proposition 5.2.18 from \cite{karatzas1991brownian} to each $[\tau_{n-1},\tau_{n}),\; n\in\mathbb{N}$ and using the flow property of $(\widehat{X}_{t})_{t\geq 0}$ we obtain the result.}, i.e.$\;\widehat{X}^{x}_{t}\leq \widehat{X}^{x+\epsilon}_{t},\;\mathbb{P}\otimes dt\text{-a.s.}$ and using that $\pi_{x}(x,\theta)\geq 0$ for any $(x,\theta)\in\mathcal{I}\times\mathbb{R}_{+}$ and $x\mapsto \pi_{x}(x,\theta)$ is decreasing for any $\theta\in\mathbb{R}_{+}$, we obtain
        \begin{equation}
            \label{eq:(A.12)}
            \big|\pi_{x}(\widehat{X}^{x+\epsilon}_{t},\theta)-\pi_{x}(\widehat{X}^{x}_{t},\theta)\big|\leq 2\pi_{x}(\widehat{X}^{x}_{t},\theta),\quad \widehat{\mathbb{P}}\otimes dt\text{-a.s.}  
        \end{equation}
        
        Therefore, using again Condition (\ref{eq:2.2-5}) in Assumption \ref{eq:Ass 2.2}, we can let $\epsilon\downarrow 0$ and we observe that also the first expectation on the right-hand side of (\ref{eq:(A.8)}) converges to zero, due to the dominated convergence theorem.

        Now we focus on the two other terms on the right-hand side of (\ref{first ineq with est}). By the differentiability of the map $x\mapsto \widehat{X}^{x}$ (cf. Theorem 39 Chapter V.7 of \cite{protter2005stochastic}) and by the mean value theorem, for $\tilde{x}^{\epsilon}\in (x,x+\epsilon)$, we have that,
        \begin{equation}
            \label{mean value thm for exp}
            e^{\int_{0}^{t}b_{x}(\widehat{X}^{x+\epsilon}_{s},Y^{i}_{s})ds}-e^{\int_{0}^{t}b_{x}(\widehat{X}^{x}_{s},Y^{i}_{s})ds}=\epsilon e^{\int_{0}^{t} b_{x}(\widehat{X}^{\tilde{x}^{\epsilon}}_{s},Y^{i}_{s})ds} \Big(\int_{0}^{t} b_{xx}(\widehat{X}^{\tilde{x}^{\epsilon}}_{s},Y^{i}_{s}) \partial_{x}\widehat{X}^{x}_{s}\big|_{x=\tilde{x}^{\epsilon}} ds\Big),\quad \widehat{\mathbb{P}}\otimes dt\text{-a.s.}
        \end{equation}
        This, combined it with Assumption \ref{eq:Ass 2.1}-(\ref{eq:(2.1-3)}) gives us, for fixed $\eta\in\mathcal{T}$,  
        \begin{align}
            \label{estimate of E}
            B^{\epsilon}(\eta)&\leq \epsilon\widehat{\mathbb{E}}\bigg[ \bigg|e^{\int_{0}^{\eta} b_{x}(\widehat{X}^{\tilde{x}^{\epsilon}}_{s},Y^{i}_{s})ds} \Big(\int_{0}^{\eta} b_{xx}(\widehat{X}^{\tilde{x}^{\epsilon}}_{s},Y^{i}_{s}) \partial_{x}\widehat{X}^{x}_{s}\big|_{x=\tilde{x}^{\epsilon}}ds\Big) \bigg|\bigg]  \\
            &\leq \epsilon\widehat{\mathbb{E}}\bigg[e^{-c\eta}\int_{0}^{\eta}\big|b_{xx}(\widehat{X}^{\tilde{x}^{\epsilon}}_{s},Y^{i}_{s})\big| \partial_{x}\widehat{X}^{x}_{s}\big|_{x=\tilde{x}^{\epsilon}}ds\bigg] \notag \\
            &\leq \epsilon\widehat{\mathbb{E}}\bigg[\int_{0}^{\infty}e^{-cs}\big|b_{xx}(\widehat{X}^{\tilde{x}^{\epsilon}}_{s},Y^{i}_{s})\big| \partial_{x}\widehat{X}^{x}_{s}\big|_{x=\tilde{x}^{\epsilon}}ds\bigg] \notag,
        \end{align}
        hence sending $\epsilon\downarrow 0$ in (\ref{estimate of E}) by Assumption \ref{eq:Ass 2.2}-(\ref{assumption bxx}) invoking dominated convergence theorem we can exchange limit with expectation and we obtain (cf. (\ref{first ineq with est}))
        \begin{equation*}
          \lim_{\epsilon\downarrow 0}\big(k_{1}B^{\epsilon}(\tau^{*}(\theta))+k_{2}B^{\epsilon}(\sigma^{*}_{\epsilon}(\theta))\big)=0.
        \end{equation*}
        \textbf{Step 2:} Let $(i,\theta)\in \mathbb{Y}\times \mathbb{R}_{+}$. By Theorem \ref{eq:Thm 3.1}, we know that $v(\cdot,i;\theta)=k_{1},\;x\in(\underline{x},\alpha_{i}(\theta))$ and $v(\cdot,i;\theta)=k_{2},\;x\in(\beta_{i}(\theta),\overline{x})$ for any $(i,\theta)\in\mathbb{Y}\times\mathbb{R}_{+}$. Hence, it is sufficient to show that $v(\cdot,i;\theta)\in C^{2}(\mathcal{C}^{\theta}),\text{ for any }(i,\theta)\in\mathbb{Y}\times \mathbb{R}_{+}$. We then define the following boundary value problem:
        \begin{equation}
        \label{eq:(A.5)}
            \begin{cases}
                \mathcal{L}_{\widehat{X}}w(x,i;\theta)+(b_{x}(x,i)-q_{ii})w(x,i;\theta)=f(x,i;\theta),\;(x,i)\in\mathcal{C}^{\theta} \\
                w(\alpha_{i}(\theta),i;\theta)=v(\alpha_{i}(\theta),i;\theta),\; w(\beta_{i}(\theta),i;\theta)=v(\beta_{i}(\theta),i;\theta),
            \end{cases}
        \end{equation}
        where $\mathcal{L}_{\widehat{X}}w(x,i;\theta):=\frac{1}{2}\sigma^{2}(x,i)w_{xx}(x,i;\theta)+(b(x,i)+\sigma\sigma_{x}(x,i))w_{x}(x,i;\theta)$ and $f(x,i;\theta):=$
        \newline
        $-\sum_{j\neq i}q_{ij}v(x,j;\theta)-\pi_{x}(x,\theta)$. Using Condition (\ref{eq:(2.1-4)}) in Assumption \ref{eq:Ass 2.1} we conclude that (\ref{eq:(A.5)}) has a unique classical solution $w(\cdot,i;\theta)\in C^{2}(\alpha_{i}(\theta),\beta_{i}(\theta))$ (see Theorem 6.13 in \cite{gilbarg2001elliptic}).
        We then define the function $\overline{w}:(\alpha_{i}(\theta),\beta_{i}(\theta))\times \mathbb{Y}\times\mathbb{R}_{+}\mapsto \mathbb{R}$ by:
        \[
            \bar{w}(x,j;\theta):=\begin{cases}
                        w(x,i;\theta),\quad j=i \\
                        v(x,j;\theta),\quad j\neq i,
                    \end{cases}
        \]
        and the stopping time $\tau_{Y}:=\inf\{t\geq 0:Y_{t}^{i}\neq i\}$. Hence, setting $\eta(\theta):=\tau_{Y}\wedge \tau^{*}(\theta)\wedge \sigma^{*}(\theta)$, from Dynkin's formula we have:
        \begin{equation}
            \bar{w}(x,i;\theta)=w(x,i;\theta)=\widehat{\mathbb{E}}_{(x,i)}\bigg[ v(\widehat{X}_{\eta(\theta)},Y_{\eta(\theta)};\theta)e^{\int_{0}^{\eta(\theta)}b_{x}(\widehat{X}_{s},Y_{s})ds}+\int_{0}^{\eta(\theta)} e^{\int_{0}^{t}b_{x}(\widehat{X}_{s},Y_{s})ds}\pi_{x}(\widehat{X}_{t},\theta)dt \bigg]
        \end{equation}
        because $\bar{w}(\widehat{X}_{\eta(\theta)},Y_{\eta(\theta)};\theta)=v(\widehat{X}_{\eta(\theta)},Y_{\eta(\theta)};\theta)$, and, on $s<\eta(\theta)$,
        \[
            \frac{1}{2}\sigma^{2}(\widehat{X}_{s},Y_{s})\bar{w}_{xx}(\widehat{X}_{s},Y_{s};\theta)+(b(\widehat{X}_{s},Y_{s})+\sigma\sigma_{x}(\widehat{X}_{s},Y_{s}))\bar{w}_{x}(\widehat{X}_{s},Y_{s};\theta)+(b_{x}(\widehat{X}_{s},Y_{s})-q_{ii})\bar{w}(\widehat{X}_{s},Y_{s};\theta)=
        \]
        \[
            \frac{1}{2}\sigma^{2}(\widehat{X}_{s},Y_{s})w_{xx}(\widehat{X}_{s},Y_{s};\theta)+(b(\widehat{X}_{s},Y_{s})+\sigma\sigma_{x}(\widehat{X}_{s},Y_{s}))w_{x}(\widehat{X}_{s},Y_{s};\theta)+(b_{x}(\widehat{X}_{s},Y_{s})-q_{ii})w(\widehat{X}_{s},Y_{s};\theta)
        \]
        \[
            \hspace{-10cm}=f(\widehat{X}_{s},Y_{s};\theta), \quad \widehat{\mathbb{P}}_{(x,i)}\text{-a.s.}.
        \]
        However, $\eta(\theta)\leq \tau^{*}(\theta)\wedge \sigma^{*}(\theta),\; \widehat{\mathbb{P}}_{(x,i)}$-a.s., which, by the martingale property (\ref{Prop A.1-3}) in Proposition \ref{Prop A.1}, gives
        \[
            v(x,i;\theta)=\widehat{\mathbb{E}}_{(x,i)}\bigg[ v(\widehat{X}_{\eta(\theta)},Y_{\eta(\theta)};\theta)e^{\int_{0}^{\eta(\theta)}b_{x}(\widehat{X}_{s},Y_{s})ds}+\int_{0}^{\eta(\theta)} e^{\int_{0}^{t}b_{x}(\widehat{X}_{s},Y_{s})ds}\pi_{x}(\widehat{X}_{t},\theta)dt \bigg]=w(x,i;\theta).
        \]
        Since the latter holds for every $x\in(\alpha_{i}(\theta),\beta_{i}(\theta))$ and for arbitrary $i\in\mathbb{Y}$, we obtain $v\equiv w$, which finally yields that $v(\cdot,i;\theta)\in C^{2}((\alpha_{i}(\theta),\beta_{i}(\theta))$.
        \vspace{20px} 
        
        \textbf{Step 3:} Thanks to the previous steps, it is now sufficient to prove the continuity of $v_{x}(\cdot,i;\theta)$ only across the free boundaries $\alpha_{i}(\theta)$ and $\beta_{i}(\theta)$, for any $(i,\theta)\in\mathbb{Y}\times \mathbb{R}_{+}$. We prove this only for $\alpha_{i}(\theta)$, being the proof across $\beta_{i}(\theta)$ similar. Notice that from (\ref{first ineq with est}) we have
        \begin{align}
        \label{eq:(A.13)}
            \bigg| &\frac{v(x+\epsilon,i;\theta)-v(x,i;\theta)}{\epsilon} \bigg|\leq\frac{1}{\epsilon}A^{\epsilon}+\frac{k_{1}}{\epsilon}B^{\epsilon}(\tau^{*}(\theta))+\frac{k_{2}}{\epsilon}B^{\epsilon}(\sigma^{*}_{\epsilon}(\theta)).
        \end{align}
        We recall that the map $x\mapsto \widehat{X}^{x}$ is differentiable and by regularity of profit the function (cf. Assumption \ref{eq:Ass 2.2}), for $x^{\epsilon}_{1}\in (x,x+\epsilon)$ we have that,
        \begin{equation}
        \label{eq:(A.15)}
            \big| \pi_{x}(\widehat{X}^{x+\epsilon}_{t},\theta)-\pi_{x}(\widehat{X}^{x}_{t},\theta) \big|\leq \epsilon\big| \pi_{xx}(\widehat{X}^{x^{\epsilon}_{1}}_{t},\theta) \big|\partial_{x}\widehat{X}^{x}\big|_{x=x^{\epsilon}_{1}},\quad \widehat{\mathbb{P}}\otimes dt\text{-a.s.}
        \end{equation}
        Using (\ref{eq:(A.15)}) in the first expectation on the right-hand side of (\ref{eq:(A.9)}), and applying the mean value theorem in the second expectation (cf.\ (\ref{mean value thm for exp})), we can find $x^{\epsilon}_{2}\in (x,x+\epsilon)$ such that
        \begin{align}
        \label{eq:(A.16)}
            \frac{1}{\epsilon}A_{\epsilon}&\leq \widehat{\mathbb{E}}\bigg[\int_{0}^{\tau^{*}(\theta)\wedge \sigma^{*}_{\epsilon}(\theta)}e^{-ct}\big|\pi_{xx}(\widehat{X}^{x^{\epsilon}_{1}}_{t},\theta)\big|\partial_{x}\widehat{X}^{x}_{s}\big|_{x=x^{\epsilon}_{1}} ds\bigg] \\
            &\quad +\widehat{\mathbb{E}}\bigg[\int_{0}^{\tau^{*}(\theta)\wedge \sigma^{*}_{\epsilon}(\theta)}\bigg(e^{\int_{0}^{t}b_{x}(\widehat{X}^{x^{\epsilon}_{2}}_{s},Y^{i}_{s})ds}\cdot\big|\pi_{x}(\widehat{X}^{x}_{t},\theta)\big|\cdot\bigg| \int_{0}^{t}b_{xx}(\widehat{X}^{x^{\epsilon}_{2}}_{s},Y^{i}_{s}) \partial_{x} \widehat{X}^{x}_{s}  \big|_{x=x^{\epsilon}_{2}}ds \bigg|\bigg) dt \bigg]. \notag
        \end{align}
        On the other hand, by (\ref{estimate of E}), for any $\eta\in\mathcal{T}$, we have
        \begin{align}
            \label{Estimation for B deriv}
            \frac{1}{\epsilon}B^{\epsilon}(\eta)\leq \widehat{\mathbb{E}}\bigg[\int_{0}^{\infty}e^{-cs}\big|b_{xx}(\widehat{X}^{\tilde{x}^{\epsilon}}_{s},Y^{i}_{s})\big| \partial_{x}\widehat{X}^{x}_{s}\big|_{x=\tilde{x}^{\epsilon}}ds\bigg],
        \end{align}
        for some $x_{\epsilon}\in (x,x+\epsilon)$. Hence, sending $\epsilon\to 0$, we can interchange limits and expectation thanks to Conditions (\ref{eq:2.2-5}) and (\ref{assumption bxx}) in Assumption \ref{eq:Ass 2.2}, and find from (\ref{eq:(A.13)}) that (cf.\ (\ref{eq:(A.16)}) and (\ref{Estimation for B deriv})) 
        \begin{align}
            \label{eq:(A.17)}
            0 \leq|v_{x}(x,i;\theta)|&
            \leq\widehat{\mathbb{E}}\bigg[\int_{0}^{\tau^{*}(\theta)\wedge \sigma^{*}(\theta)}e^{-ct}|\pi_{xx}(\widehat{X}^{x}_{t},\theta)|\partial_{x}\widehat{X}^{x}_{t} ds\bigg] \\
            &\quad +\widehat{\mathbb{E}}\bigg[\int_{0}^{\tau^{*}(\theta)\wedge \sigma^{*}(\theta)}e^{-ct}\cdot\big|\pi_{x}(\widehat{X}^{x}_{t},\theta)\big|\cdot \bigg(\int_{0}^{t}\big|b_{xx}(\widehat{X}^{x}_{s},Y^{i}_{s})\big|\partial_{x}\widehat{X}^{x}_{s}ds\bigg) dt\bigg] \notag \\
            &\quad + k_{1} \widehat{\mathbb{E}}\bigg[ e^{\int_{0}^{\tau^{*}(\theta)} b_{x}(\widehat{X}^{x}_{s},Y^{i}_{s})ds} \Big(\int_{0}^{\tau^{*}(\theta)}\big|b_{xx}(\widehat{X}^{x}_{s},Y^{i}_{s})\big| \partial_{x}\widehat{X}^{x}_{s}\big|_{x}ds\Big)\bigg] \notag \\
            &\quad + k_2 \widehat{\mathbb{E}}\bigg[ e^{\int_{0}^{\sigma^{*}(\theta)} b_{x}(\widehat{X}^{x}_{s},Y^{i}_{s})ds} \Big(\int_{0}^{\sigma^{*}(\theta)}\big|b_{xx}(\widehat{X}^{x}_{s},Y^{i}_{s})\big| \partial_{x}\widehat{X}^{x}_{s}ds\Big)\bigg], \notag 
        \end{align}
        where we have also used that $\sigma^{*}_{\epsilon}(\theta)\to \sigma^{*}(\theta)$ $\widehat{\mathbb{P}}_{(x,i)}\otimes d\theta\text{-a.s.}$
        Finally, sending $x\downarrow \alpha_{i}(\theta)$, $\tau^{*}(\theta)\to 0,\widehat{\mathbb{P}}_{(x,i)}\otimes d\theta\text{-a.s.}$ so that $v(\cdot,i;\theta)$ is $C^{1}$ across $\alpha_{i}(\theta)$. 
    \end{proof}
  Finally, arguing as in the proofs of Steps 1 and 3 in the proof of Proposition 3.3, one can evaluate upper and lower bounds for $\frac{1}{\epsilon}\big(v(x+\epsilon,i;\theta)-v(x,i;\theta) \big)$, which, after taking limits as $\epsilon\downarrow 0$ yield the following probabilistic representation of $v_{x}$.
\begin{lemma}
    \label{repr vx}
    Under the Assumptions \ref{eq:Ass 2.2}-(\ref{eq:2.2-5}) and (\ref{assumption bxx})-(\ref{eq:Ass 2.1}),  the  representation
    \begin{align*}
        v_{x}(x,i;\theta)=&\widehat{\mathbb{E}}\bigg[ \int_{0}^{\tau^{*}(\theta)\wedge\sigma^{*}(\theta)}e^{\int_{0}^{t}b_{x}(\widehat{X}^{x}_{s},Y^{i}_{s})ds}\bigg( \pi_{xx}(\widehat{X}^{x}_{t},\theta)\partial_{x}\widehat{X}^{x}_{t} \\
        & \qquad +\Big(\int_{0}^{t}b_{xx}(\widehat{X}^{x}_{s},Y^{i}_{s})\partial_{x}\widehat{X}^{x}_{s}ds\Big)\pi_{x}(\widehat{X}^{x}_{t},\theta) \bigg)dt \bigg] \\
        & \quad +k_1 \widehat{\mathbb{E}}\bigg[ e^{\int_{0}^{\tau^{*}(\theta)} b_{x}(\widehat{X}^{x}_{s},Y^{i}_{s})ds} \Big(\int_{0}^{\tau^{*}(\theta)}b_{xx}(\widehat{X}^{x}_{s},Y^{i}_{s}) \partial_{x}\widehat{X}^{x}_{s}ds\Big) \boldsymbol{1}_{ \{ \tau^{*}(\theta) < \sigma^{*}(\theta) \} } \bigg] \\
        & \quad + k_2  \widehat{\mathbb{E}}\bigg[ e^{\int_{0}^{\sigma^{*}(\theta)} b_{x}(\widehat{X}^{x}_{s},Y^{i}_{s})ds} \Big(\int_{0}^{\sigma^{*}(\theta)}b_{xx}(\widehat{X}^{x}_{s},Y^{i}_{s}) \partial_{x}\widehat{X}^{x}_{s}ds\Big) \boldsymbol{1}_{ \{\sigma^{*}(\theta)<\tau^{*}(\theta) \} } \bigg] 
    \end{align*}
    holds for any $(x,i,\theta)\in\mathcal{O}\times \mathbb{R}_{+}$, where $\tau^{*}(\theta)=\tau^{*}(x,i;\theta)$ and $\sigma^{*}(\theta)=\sigma^{*}(x,i;\theta)$ are as in Theorem \ref{eq:Thm 3.1}. 
\end{lemma}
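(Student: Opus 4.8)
The plan is to obtain the representation from a two-sided estimate on the difference quotient $\frac{1}{\epsilon}\big(v(x+\epsilon,i;\theta)-v(x,i;\theta)\big)$, following Steps 1 and 3 in the proof of Proposition \ref{eq:Prop 3.3}, and then to let $\epsilon\downarrow 0$. Fix $(x,i,\theta)\in\mathcal{O}\times\mathbb{R}_{+}$ and realize the flow $x\mapsto\widehat{X}^{x}$ of \eqref{eq:(2.3)} on a single probability space. Let $(\tau^{*},\sigma^{*})$ and $(\tau^{*}_{\epsilon},\sigma^{*}_{\epsilon})$ be the saddle points of the Dynkin game \eqref{eq:(3.6)} started at $(x,i)$ and $(x+\epsilon,i)$, respectively (cf.\ Theorem \ref{eq:Thm 3.1}). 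Using the saddle-point inequalities at the two initial points one gets
\begin{equation*}
  \widehat{J}(x+\epsilon,i;\tau^{*}_{\epsilon},\sigma^{*},\theta)-\widehat{J}(x,i;\tau^{*}_{\epsilon},\sigma^{*},\theta)\ \le\ v(x+\epsilon,i;\theta)-v(x,i;\theta)\ \le\ \widehat{J}(x+\epsilon,i;\tau^{*},\sigma^{*}_{\epsilon},\theta)-\widehat{J}(x,i;\tau^{*},\sigma^{*}_{\epsilon},\theta).
\end{equation*}
In each of these two differences the pair of stopping times is the \emph{same} in both copies of $\widehat{J}$, so the indicator terms $\boldsymbol{1}_{\{\tau<\sigma\}}$ and $\boldsymbol{1}_{\{\sigma<\tau\}}$ cancel, and only the differences $\pi_{x}(\widehat{X}^{x+\epsilon}_{t},\theta)-\pi_{x}(\widehat{X}^{x}_{t},\theta)$ and $e^{\int_{0}^{t}b_{x}(\widehat{X}^{x+\epsilon}_{s},Y_{s})ds}-e^{\int_{0}^{t}b_{x}(\widehat{X}^{x}_{s},Y_{s})ds}$ survive, exactly as in \eqref{first ineq with est}.

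Next I would divide by $\epsilon$ and linearize each surviving difference. By differentiability of the flow (Theorem 39, Chapter V.7 in \cite{protter2005stochastic}) and the mean value theorem, for intermediate points $x^{\epsilon}_{1},x^{\epsilon}_{2}\in(x,x+\epsilon)$ one has the identities \eqref{eq:(A.15)} and \eqref{mean value thm for exp} for $\tfrac1\epsilon\big(\pi_{x}(\widehat{X}^{x+\epsilon}_{t},\theta)-\pi_{x}(\widehat{X}^{x}_{t},\theta)\big)$ and $\tfrac1\epsilon\big(e^{\int_{0}^{t}b_{x}(\widehat{X}^{x+\epsilon}_{s},Y_{s})ds}-e^{\int_{0}^{t}b_{x}(\widehat{X}^{x}_{s},Y_{s})ds}\big)$, respectively, in terms of $\pi_{xx}$, $b_{xx}$ and $\partial_{x}\widehat{X}^{x}$. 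Plugging these into the two differences of the sandwich (and using $b_{x}\le -c$ from Assumption \ref{eq:Ass 2.1}-(\ref{eq:(2.1-3)}) to dominate the exponential weights by $e^{-ct}$) yields explicit upper and lower bounds for $\frac{1}{\epsilon}\big(v(x+\epsilon,i;\theta)-v(x,i;\theta)\big)$ having the same structure as the right-hand side in Lemma \ref{repr vx}, but with $\tau^{*}\wedge\sigma^{*}$ replaced by $\tau^{*}_{\epsilon}\wedge\sigma^{*}$ (lower bound) or $\tau^{*}\wedge\sigma^{*}_{\epsilon}$ (upper bound), and with $x$ replaced by $x^{\epsilon}_{1},x^{\epsilon}_{2}$ inside the integrands; the indicator factors $\boldsymbol{1}_{\{\tau^{*}<\sigma^{*}\}}$ and $\boldsymbol{1}_{\{\sigma^{*}<\tau^{*}\}}$ appear in the $k_1$ and $k_2$ boundary terms, now perturbed by the corresponding $\epsilon$-subscript.

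Finally I would pass to the limit $\epsilon\downarrow 0$. The integrability requirements in Assumption \ref{eq:Ass 2.2}-(\ref{eq:2.2-5}) and -(\ref{assumption bxx}) furnish integrable envelopes, uniform for the argument in a right-neighbourhood $(x,x+\epsilon_{0})$, for all integrands that appear, so dominated convergence applies; continuity of the flow gives $\widehat{X}^{x^{\epsilon}_{j}}_{t}\to\widehat{X}^{x}_{t}$ and $\partial_{x}\widehat{X}^{x}_{t}\big|_{x=x^{\epsilon}_{j}}\to\partial_{x}\widehat{X}^{x}_{t}$, while the comparison principle together with the non-degeneracy of $\widehat{X}$ (Assumption \ref{eq:Ass 2.1}-(\ref{eq:(2.1-4)})) yields $\tau^{*}_{\epsilon}\to\tau^{*}$ and $\sigma^{*}_{\epsilon}\to\sigma^{*}$ $\widehat{\mathbb{P}}_{(x,i)}$-a.s.\ (the latter being the convergence already used in Step 3 of the proof of Proposition \ref{eq:Prop 3.3}); this also forces $\boldsymbol{1}_{\{\tau^{*}<\sigma^{*}_{\epsilon}\}}\to\boldsymbol{1}_{\{\tau^{*}<\sigma^{*}\}}$ and $\boldsymbol{1}_{\{\sigma^{*}_{\epsilon}<\tau^{*}\}}\to\boldsymbol{1}_{\{\sigma^{*}<\tau^{*}\}}$ a.s. Hence both bounds converge to the common expression displayed in the statement, and the squeeze gives the claimed representation of $v_{x}(x,i;\theta)$. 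The main obstacle is precisely this limiting step: one must guarantee $\widehat{\mathbb{P}}_{(x,i)}$-a.s.\ convergence of the perturbed optimal stopping times to the unperturbed ones --- which rests on the regularity of the diffusion $\widehat{X}$ ruling out sticky behaviour at the free boundaries $\alpha_{i}(\theta),\beta_{i}(\theta)$ --- and the uniform integrability of the mean-value remainders, whose suprema over the shrinking intervals $(x,x+\epsilon)$ are exactly what Assumption \ref{eq:Ass 2.2}-(\ref{eq:2.2-5}) and -(\ref{assumption bxx}) are designed to control.
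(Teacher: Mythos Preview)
Your proposal is correct and follows essentially the same route as the paper: the paper merely remarks that ``arguing as in the proofs of Steps 1 and 3 in the proof of Proposition \ref{eq:Prop 3.3}, one can evaluate upper and lower bounds for $\frac{1}{\epsilon}\big(v(x+\epsilon,i;\theta)-v(x,i;\theta)\big)$, which, after taking limits as $\epsilon\downarrow 0$ yield the \ldots\ representation'', and your sandwich via the saddle-point inequalities, mean-value linearization through \eqref{eq:(A.15)} and \eqref{mean value thm for exp}, and passage to the limit using the envelopes from Assumption \ref{eq:Ass 2.2}-(\ref{eq:2.2-5}),(\ref{assumption bxx}) is precisely that argument spelled out. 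You also correctly flag the one delicate point---the a.s.\ convergence $\tau^{*}_{\epsilon}\to\tau^{*}$, $\sigma^{*}_{\epsilon}\to\sigma^{*}$---which the paper uses in Step 3 of Proposition \ref{eq:Prop 3.3} without further comment.
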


\section{Skorokhod Reflection Problem with Regime-Switching and Proof of Theorem \ref{Thm 3.2}}
\label{Appendix B}
In this section we focus on the existence and uniqueness of a solution to $\mathbf{SP}(\alpha,\beta,x,i)$ as defined in Definition \ref{eq:Def 3.1}. Then we show a comparison result and finally we prove Theorem \ref{Thm 3.2}.

Recall $I$ as in  (\ref{eq:(3.18)}) and denote by $\Gamma:\mathcal{D}[0,\infty)\to \mathcal{D}[0,\infty)$ the following operator
\begin{align}
    \label{(B.1)}
    \Gamma (X)_{t}:=I(X)_{t}-\max \bigg\{ \big( x-\beta_{i}(\theta)\big)^{+}\wedge \inf_{s\leq t}\big( I(X)_{s}-\alpha_{Y_{s}}(\theta) \big), \nonumber \\
    \sup_{s\leq t}\bigg( \big(I(X)_{s}-\beta_{Y_{s}}(\theta)\big) \wedge \inf_{u\in [s,t]}\big(I(X)_{u}-\alpha_{Y_{u}}(\theta)\big)\bigg) \bigg\}=:I(X)_{t}-\Xi(X)_{t}.
\end{align}
We first prove that the operator has a fixed point.
\begin{proposition}
\label{eq:Prop B.1}
    There exists unique solution to Skorokhod problem $\mathbf{SP}(\alpha,\beta,x,i)$.
\end{proposition}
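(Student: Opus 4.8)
The plan is to realise a solution of $\mathbf{SP}(\alpha,\beta,x,i)$ as a fixed point of the operator $\Gamma$ in \eqref{(B.1)}, obtained by a Picard iteration, and to derive uniqueness from the same contraction-type estimate. The first step is to recall the deterministic two-sided Skorokhod map on the regime-switching interval: for $\mathbb{P}$-a.e.\ $\omega$ the barriers $t\mapsto\alpha_{Y_t(\omega)}$ and $t\mapsto\beta_{Y_t(\omega)}$ are piecewise constant, right-continuous and satisfy $\alpha_{Y_t}<\beta_{Y_t}$ for all $t$, so that for every $\psi\in\mathcal{D}[0,\infty)$ the correction $\Xi(\psi)$ in \eqref{(B.1)} is the unique process of locally bounded variation, with Jordan decomposition $\Xi(\psi)=\Xi^{+}(\psi)-\Xi^{-}(\psi)$, for which $\psi-\Xi(\psi)$ stays in $[\alpha_{Y_\cdot},\beta_{Y_\cdot}]$ and the flat-off conditions $\int_0^T\boldsymbol{1}_{\{\psi_t-\Xi(\psi)_t>\alpha_{Y_t}\}}\,d\Xi^{+}(\psi)_t=\int_0^T\boldsymbol{1}_{\{\psi_t-\Xi(\psi)_t<\beta_{Y_t}\}}\,d\Xi^{-}(\psi)_t=0$ hold (see Burdzy et al.\ \cite{burdzy2009skorokhod}; alternatively one solves recursively on the excursion intervals of $Y$ between consecutive jumps, where the barriers are constant and the classical explicit Skorokhod map on $[a,b]$ applies, and concatenates, using $\lim_n\tau_n=\infty$ $\mathbb{P}$-a.s.). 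Two features I would record: $\Xi$ is non-anticipative, hence sends $\mathbb{F}$-adapted càdlàg processes to $\mathbb{F}$-adapted ones; and there is a universal constant $C_\Xi$ with $\sup_{s\le t}|\Xi(\psi_1)_s-\Xi(\psi_2)_s|\le C_\Xi\sup_{s\le t}|\psi_1(s)-\psi_2(s)|$ for all $t$.

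Next I would fix $T>0$, set $X^{(0)}_t:=x$ and $X^{(n+1)}:=\Gamma(X^{(n)})$; by non-anticipativity and the fact that $I$ in \eqref{eq:(3.18)} preserves adaptedness, each $X^{(n)}$ is $\mathbb{F}$-adapted and càdlàg. Using the local Lipschitz property of $b(\cdot,i),\sigma(\cdot,i)$ (Assumption \ref{eq:Ass 2.1}-(\ref{eq:(2.1-1)})--(\ref{eq:(2.1-2)})), via a standard localisation, together with the Burkholder--Davis--Gundy and Cauchy--Schwarz inequalities, I expect the estimate
\begin{equation*}
    \mathbb{E}\Big[\sup_{t\le T}\big|I(X^{(n)})_t-I(X^{(n-1)})_t\big|^2\Big]\le L(1+T)\int_0^T\mathbb{E}\Big[\sup_{u\le s}\big|X^{(n)}_u-X^{(n-1)}_u\big|^2\Big]\,ds,
\end{equation*}
and hence, by the Lipschitz bound on $\Xi$,
\begin{equation*}
    \Delta_n(T):=\mathbb{E}\Big[\sup_{t\le T}\big|X^{(n+1)}_t-X^{(n)}_t\big|^2\Big]\le K\int_0^T\Delta_{n-1}(s)\,ds,\qquad K:=C_\Xi^{2}L(1+T).
\end{equation*}
Iterating yields $\Delta_n(T)\le\Delta_0(T)(KT)^n/n!$, which is summable; since uniform limits of càdlàg functions are càdlàg, $(\mathcal{D}[0,T],\|\cdot\|_\infty)$ is complete, so $\{X^{(n)}\}$ converges in $L^2(\Omega;\mathcal{D}[0,T])$, and along a subsequence $\mathbb{P}$-a.s.\ uniformly on $[0,T]$, to an $\mathbb{F}$-adapted càdlàg limit $X$. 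Passing to the limit in $X^{(n+1)}=\Gamma(X^{(n)})$, using $L^2$-continuity of the stochastic integral in its integrand and Lipschitz-continuity of $\Xi$, gives $X=\Gamma(X)$ on $[0,T]$; consistency of the solutions on nested intervals (again by deterministic uniqueness of $\Xi$) upgrades this to a fixed point $X\in\mathcal{D}[0,\infty)$ of $\Gamma$.

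It then remains to read off the solution and to establish uniqueness. Setting $\xi^{\pm}:=\Xi^{\pm}(I(X))$ and $\xi:=\xi^{+}-\xi^{-}$, the identity $X=I(X)-\Xi(I(X))$ is precisely \eqref{eq:(3.19)}, while Properties (\ref{eq:Def 3.1-2}) and (\ref{eq:Def 3.1-3}) of Definition \ref{eq:Def 3.1} are the range constraint and the flat-off conditions of the Skorokhod map; moreover $\xi$ has locally bounded variation, and after normalising $\xi_0:=0$ (the possible initial adjustment being carried by the jump at $0$, which is immaterial for the $\mathbb{P}\otimes dt$-a.s.\ statements) and taking the left-continuous modification one has $\xi\in\mathcal{A}$. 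For uniqueness, given two solutions $(X^{\xi},\xi)$ and $(X^{\xi'},\xi')$, the deterministic uniqueness of the Skorokhod map with prescribed driving path forces $\xi^{+}-\xi^{-}=\Xi(I(X^{\xi}))$ and $\xi'^{+}-\xi'^{-}=\Xi(I(X^{\xi'}))$, i.e.\ $X^{\xi}=\Gamma(X^{\xi})$ and $X^{\xi'}=\Gamma(X^{\xi'})$; running the estimate above with these two processes gives, for every $T$, $\mathbb{E}[\sup_{t\le T}|X^{\xi}_t-X^{\xi'}_t|^2]\le K\int_0^T\mathbb{E}[\sup_{u\le s}|X^{\xi}_u-X^{\xi'}_u|^2]\,ds$, so Gronwall's lemma yields indistinguishability on $[0,T]$, hence on $[0,\infty)$, and consequently $\xi=\xi'$.

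The step I expect to be the main obstacle is the clean treatment of the \emph{time-dependent, regime-switching} barriers in the two-sided Skorokhod map: establishing that $\Xi$ in \eqref{(B.1)} is well defined, non-anticipative, and satisfies the sup-norm Lipschitz bound with a constant uniform in $\omega$. This is where one invokes the time-dependent Skorokhod problem of Burdzy et al.\ \cite{burdzy2009skorokhod}, or, more transparently, carries out the excursion-by-excursion construction between the jump times of $Y$ — where the barriers are constant and the standard explicit map on a fixed interval applies with its classical Lipschitz constant — patched together using the non-explosion of $Y$. Once that is in place, the remaining ingredients (the Picard contraction, the limit passage, and the Gronwall uniqueness) are routine.
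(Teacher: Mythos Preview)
Your proposal is correct and follows essentially the same route as the paper: a Picard iteration for the operator $\Gamma$ in \eqref{(B.1)}, driven by the Lipschitz continuity of the Skorokhod correction $\Xi$ combined with the BDG/Hölder estimate for $I$, yielding the summable bound $(KT)^n/n!$ and hence a fixed point, with uniqueness via Gronwall. The only cosmetic difference is that the paper obtains the Lipschitz constant for $\Xi$ directly from the explicit formula \eqref{(B.1)} using the elementary inequalities $|\max\{\alpha_1,\beta_1\}-\max\{\alpha_2,\beta_2\}|\le\max\{|\alpha_1-\alpha_2|,|\beta_1-\beta_2|\}$ and $|\sup f-\sup g|,\,|\inf f-\inf g|\le\sup|f-g|$ (which immediately gives $\sup_{s\le t}|\Gamma(X)_s-\Gamma(X')_s|^2\le 4\sup_{s\le t}|I(X)_s-I(X')_s|^2$), rather than invoking Burdzy et al.\ or an excursion-by-excursion argument; either way the rest of the proof is identical.
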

\begin{proof}
    Let $T>0$ and $X,X'\in\mathcal{D}[0,\infty)$, then using the inequalities
    \begin{align*}
        \max\big\{ \alpha_{1},\beta_{1} \big\}-\max\big\{ \alpha_{2},\beta_{2} \big\}&\leq \max\big\{ \alpha_{1}-\alpha_{2},\beta_{1}-\beta_{2} \big\} \\
        \big| \sup_{s\leq t}|f(s)|-\sup_{s\leq t}|g(s)|\big|&\leq \sup_{s\leq t}\big| f(s)-g(s) \big| \\
        \big| \inf_{s\leq t}|f(s)|-\inf_{s\leq t}|g(s)|\big|&\leq \sup_{s\leq t}\big| f(s)-g(s) \big|,
    \end{align*}
    we obtain that
    \begin{equation}
        \sup_{s\leq t}\big| \Gamma(X)_{s}-\Gamma(X')_{s}\big|^{2}\leq 4\sup_{s\leq t}\big| I(X)_{s}-I(X')_{s} \big|^{2},\; t\in [0,T].
    \end{equation}
    We  now derive an estimate for $\sup_{s\leq t}\big|I(X)_{s}-I(X')_{s} \big|^{2}$. In particular, we have that
    \begin{align*}
        \mathbb{E}\big[ \sup_{s\leq t}\big|I(X)_{s}-I(X')_{s} \big|^{2} \big] & \leq 2\mathbb{E}\bigg[ \sup_{s\leq t}\bigg|\int_{0}^{s}\big( b(X_{s},Y_{s})-b(X_{s}',Y_{s})\big)ds\bigg|^{2} \\
        & +\sup_{s\leq t}\bigg|\int_{0}^{s}\big( \sigma(X_{s},Y_{s})-\sigma(X_{s}',Y_{s})\big)dW_{s}\bigg|^{2}\bigg].
    \end{align*}
    By an application of Hölder's inequality in the drift term and an application of Burkholder-Davis-Gundy's inequality to the local martingale $M_{t}:=\int_{0}^{t}\big(\sigma(X_{s},Y_{s})-\sigma(X_{s}',Y_{s})\big)dW_{s}$, we find that
    \begin{align*}
        \mathbb{E}\bigg[ \sup_{s\leq t}\big|I(X)_{s}-I(X')_{s} \big|^{2} \bigg] & \leq \mathbb{E}\bigg[ C(T+1)\int_{0}^{t}\big|b(X_{s},Y_{s})-b(X_{s}',Y_{s})\big|^{2}ds\bigg] \\
        & \leq CK^{2}(1+T)\mathbb{E}\bigg[ \int_{0}^{t}\sup_{u\leq s}\big|X_{u}-X_{u}'\big|^{2}ds\bigg],
    \end{align*}
    where the last inequality is due to the Lipschitz property of $b$ and $\sigma$. Finally, for a constant $C_{1}=C_{1}(K,T)>0$ it holds 
    \begin{equation}
        \label{eq:(B.3)}
        \mathbb{E}\bigg[ \sup_{s\leq t}\big| \Gamma(X)_{s}-\Gamma(X')_{s}\big|^{2} \bigg]\leq C_{1}\mathbb{E}\bigg[ \int_{0}^{t}\sup_{u\leq s}\big|X_{u}-X_{u}'\big|^{2}ds\bigg],\; t\in [0,T].
    \end{equation}
    Let $\{X^{(n)}\}_{n\in\mathbb{N}}$ be a $\mathbb{F}$-adapted process such that
    \begin{equation*}
        \begin{cases}
        X^{(0)}_{t}:=x,\; t\in [0,T] \\
        X^{(n)}_{t}:=\Gamma(X^{(n-1)})_{t},\; t\in [0,T].
    \end{cases}
    \end{equation*}
    Then, by (\ref{eq:(B.3)}) we have that
    \begin{equation}
        \mathbb{E}\bigg[ \sup_{s\leq t}\big| \Gamma(X^{(n)})_{s}-\Gamma(X^{(n-1)})_{s}\big|^{2} \bigg]\leq C_{1}\mathbb{E}\bigg[ \int_{0}^{t}\sup_{u\leq s}\big|X^{(n-1)}_{u}-X_{u}^{(n-2)}\big|^{2}ds\bigg],
    \end{equation}
    which yields
    \begin{equation}
        \label{eq:(B.5)}
        \mathbb{E}\bigg[ \sup_{s\leq t}\big| \Gamma(X^{(n)})_{s}-\Gamma(X^{(n-1)})_{s}\big|^{2} \bigg]\leq \frac{(Rt)^{n}}{n!},
    \end{equation}
    where $\mathbb{E}\big[ \sup_{s\leq t}|X^{(1)}_{s}-x|^{2} \big]\leq Rt$. Hence, by using (\ref{eq:(B.5)}) and Markov inequality, we obtain that
    \begin{equation*}
        \int_{0}^{t}b(X^{(n)}_{s},Y_{s})ds+\int_{0}^{t}\sigma(X^{(n)}_{s},Y_{s})dW_{s}\to \int_{0}^{t}b(X_{s},Y_{s})ds+\int_{0}^{t}\sigma(X_{s},Y_{s})dW_{s},
    \end{equation*}
    uniformly in compact sets of $[0,T]$ as $n\uparrow \infty$. Defining $X:=\lim_{n\to\infty}\Gamma(X^{(n)})$, and taking $\xi=\xi^{+}-\xi^{-}=\Xi(X)$, we write $X^{\xi}=I(X^{\xi})+\xi$ so that the pair $(X^{\xi},\xi)$ satisfies the properties of Skorokhod problem in Definition \ref{eq:Def 3.1} for every $t\in [0,T]$. To extend the solution to $[0,\infty)$, we observe that (\ref{eq:(B.3)}) does not depend from the initial conditions. Hence, the result holds on each interval with length $T$ and, by writing $[0,\infty):=\bigcup_{n=1}^{\infty}[(n-1)T,nT)$, the proof is complete.
\end{proof}

The next Proposition establishes a comparison principle for the optimally controlled state with respect to the mean-field parameter.

\begin{proposition}
\label{Prop B.2}
    Let $x_{1},x_{2}\in\mathcal{I}$ and $\theta_{1},\theta_{2}\in\mathbb{R}_{+}$ such that $x_{1}\leq x_{2}$ and $\theta_{1}\leq \theta_{2}$ then it is true that:
    \begin{equation}
        X^{x_{1},\xi^{*}(\theta_{1})}_{t}\leq X^{x_{2},\xi^{*}(\theta_{2})}_{t},\; \mathbb{P}\otimes dt\text{-a.s.}
    \end{equation}
\end{proposition}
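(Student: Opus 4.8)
The plan is to prove the comparison principle via the explicit Skorokhod map representation provided by Proposition \ref{eq:Prop B.1} and Theorem \ref{Thm 3.2}, exploiting monotonicity of each building block in the data $(x,\alpha,\beta)$. First I would recall that, by Lemma \ref{Lemma 4.1}, the free boundaries satisfy $\alpha_{Y_s}(\theta_2)\le\alpha_{Y_s}(\theta_1)$ and $\beta_{Y_s}(\theta_2)\le\beta_{Y_s}(\theta_1)$ for every $s$, $\mathbb{P}$-a.s., since $\theta_1\le\theta_2$. Thus it suffices to prove the following monotonicity statement for the Skorokhod reflection problem itself: if $x_1\le x_2$, $\alpha^{(1)}_j\le\alpha^{(2)}_j$ and $\beta^{(1)}_j\le\beta^{(2)}_j$ for all $j\in\mathbb{Y}$, then the solutions $X^{(1)}$ to $\mathbf{SP}(\alpha^{(1)},\beta^{(1)},x_1,i)$ and $X^{(2)}$ to $\mathbf{SP}(\alpha^{(2)},\beta^{(2)},x_2,i)$, driven by the \emph{same} noise $(W,Y)$, satisfy $X^{(1)}_t\le X^{(2)}_t$, $\mathbb{P}\otimes dt$-a.s. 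Applying this with $(\alpha^{(k)},\beta^{(k)})=(\alpha(\theta_k),\beta(\theta_k))$ then yields the claim.

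The key step is then a Picard-iteration comparison argument. Recall from \eqref{eq:(3.18)} the map $I$ and from \eqref{(B.1)} the operator $\Gamma=I-\Xi$, where the reflection correction $\Xi$ depends on the barriers. I would first observe that the operator $I$ is monotone in the sense that if two input paths $X\le X'$ pointwise and share the same noise, then $I(X)_t\le I(X')_t$ for all $t$: this follows from Assumption \ref{eq:Ass 2.1} since $b(\cdot,i)$ is increasing would be false — $b_x\le -c<0$ — so I must be careful here. In fact $b(\cdot,i)$ is \emph{decreasing}, so $I$ is \emph{not} pathwise monotone. The correct route is to compare the iterates $X^{(n),k}$ defined as in the proof of Proposition \ref{eq:Prop B.1} starting from the constants $x_k$, and to show $X^{(n),1}_t\le X^{(n),2}_t$ inductively, using instead a one-sided bound: one shows that if $X^{(1)}_{t}\le X^{(2)}_{t}$ on $[0,s]$ then, because the driving coefficient difference $b(X^{(1)}_s,Y_s)-b(X^{(2)}_s,Y_s)$ has a controlled (one-sided Lipschitz) sign via $b_x\le -c$, the Gronwall-type inequality prevents the paths from crossing; this is the standard SDE comparison theorem, which — as the footnote in the proof of Proposition \ref{eq:Prop 3.3} indicates — extends to the hybrid setting by decomposing $[0,\infty)=\bigcup_n[\tau_{n-1},\tau_n)$ along the jump times of $Y$ and applying the classical one-dimensional comparison theorem (Proposition 5.2.18 in \cite{karatzas1991brownian}) on each deterministic-regime interval. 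Concurrently, one checks that the reflection operator $\Xi$ is monotone in the barrier data in the right direction: raising $\alpha$ pushes the reflected path up (more upward pushing), and raising $\beta$ also relaxes the upper constraint and pushes the path up — so the correction term preserves the ordering of the two reflected processes.

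Concretely, I would argue as follows. Fix a regime interval $[\tau_{n-1},\tau_n)$ on which $Y\equiv j$. On this interval both $X^{(1)}$ and $X^{(2)}$ solve the one-dimensional Skorokhod problem with reflection at $\{\alpha^{(k)}_j,\beta^{(k)}_j\}$, driven by the same Brownian motion, with $X^{(k)}_{\tau_{n-1}}$ given, and with $X^{(1)}_{\tau_{n-1}}\le X^{(2)}_{\tau_{n-1}}$ by the induction hypothesis across regime boundaries. Because the interval $[\alpha^{(1)}_j,\beta^{(1)}_j]$ lies entirely to the left of (or overlaps with a left shift of) $[\alpha^{(2)}_j,\beta^{(2)}_j]$, the classical comparison result for one-dimensional reflected SDEs with one-sided Lipschitz drift (here even strictly decreasing, hence monotone in the favorable direction) — see e.g. the comparison theory underlying \cite{burdzy2009skorokhod} — gives $X^{(1)}_t\le X^{(2)}_t$ on $[\tau_{n-1},\tau_n)$, and in particular $X^{(1)}_{\tau_n-}\le X^{(2)}_{\tau_n-}$, which propagates the ordering to the next interval since $Y$ is continuous at its own jump times in the sense of its left limits matching. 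Iterating over $n$, and using that $\tau_n\uparrow\infty$ $\mathbb{P}$-a.s.\ by the regularity noted after Assumption \ref{eq:Ass 2.1}, we obtain $X^{(1)}_t\le X^{(2)}_t$ for all $t\ge 0$, $\mathbb{P}$-a.s., hence $\mathbb{P}\otimes dt$-a.s.

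The main obstacle is the interaction between the reflection correction and the comparison: one must verify that the explicit formula \eqref{(B.1)} for $\Xi$ is monotone with respect to simultaneous upward shifts of \emph{both} barriers $\alpha$ and $\beta$, which is not entirely obvious because the formula couples a running infimum against $\alpha$ with a running supremum against $\beta$ inside a nested $\max/\inf/\sup$ structure. I would handle this by checking directly that each occurrence of $\alpha_{Y_u}$ enters with a sign that makes $\Xi$ nonincreasing in $\alpha$ (so $\Gamma=I-\Xi$ is nondecreasing in $\alpha$, i.e.\ raising the lower barrier raises the reflected path), and each occurrence of $\beta_{Y_u}$ enters with a sign that makes $\Xi$ nonincreasing in $\beta$ as well; combined with the one-sided monotonicity of $I$ established above via the comparison theorem, a simultaneous induction on the Picard iterates $X^{(n),k}$ (rather than a direct pathwise argument) closes the proof. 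Alternatively, and perhaps more cleanly, one avoids \eqref{(B.1)} altogether and works with the characterization of the reflected process through its defining properties (i)--(iii) in Definition \ref{eq:Def 3.1}, applying the classical comparison theorem for Skorokhod-reflected SDEs in a moving domain regime by regime; I expect this regime-by-regime reduction to be the cleanest path and would adopt it in the write-up.
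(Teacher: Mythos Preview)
Your approach differs from the paper's, which proceeds by \emph{penalization}: for each $n\in\mathbb{N}$ one introduces the unreflected hybrid SDE with drift $b+g_n^{(\theta)}$, where $g_n^{(\theta)}(x,i)=0$ on $[\alpha_i(\theta),\beta_i(\theta)]$, $=-n(x-\alpha_i(\theta))$ below, and $=-n(x-\beta_i(\theta))$ above. Since $g_n^{(\theta)}$ is monotone in the barrier levels, Lemma~\ref{Lemma 4.1} yields a pointwise ordering of the penalized drifts, and the classical one-dimensional SDE comparison theorem (Proposition~5.2.18 in \cite{karatzas1991brownian}) --- applied on each interjump interval of $Y$ and propagated by induction --- gives $X^{(n,x_1,\theta_1)}_t\le X^{(n,x_2,\theta_2)}_t$ for all $n$; the conclusion then follows by letting $n\to\infty$ via the convergence of penalized approximations to the reflected diffusion (Theorem~1.4.1 in \cite{pilipenko}). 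This route entirely bypasses what you flag as ``the main obstacle'': there is no need to analyse monotonicity of the nested $\max/\inf/\sup$ structure in $\Xi$, nor to invoke any comparison result for \emph{reflected} SDEs, since everything is reduced to textbook comparison for ordinary SDEs with globally ordered drifts. Your regime-by-regime argument can in principle be made rigorous, but it is heavier and your pointer to \cite{burdzy2009skorokhod} is too vague to carry the reflected-SDE comparison you need; your Picard-iteration alternative runs into precisely the non-monotonicity of $I$ (due to $b_x<0$) that you yourself identify, and repairing it amounts to redoing SDE comparison at each iterate. Penalization is the cleaner path here.
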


\begin{proof}
    Let $\{X^{(n,x,\theta)}\}_{n\in\mathbb{N}}$ to be a sequence of $\mathbb{F}$-adapted processes such that
    \begin{equation}
        dX^{(n,x,\theta)}_{t}=\big(b(X^{(n,x,\theta)}_{t},Y_{t})+g_{n}^{(\theta)}(X^{(n,x,\theta)}_{t},Y_{t})\big)dt+\sigma(X^{(n,x,\theta)}_{t},Y_{t})dW_{t},
    \end{equation}
        where,
    \begin{equation*}
        g_{n}^{(\theta)}(x,i):=\begin{cases}
            0,\quad x\in [\alpha_{i}(\theta),\beta_{i}(\theta)] \\
            -n(x-\alpha_{i}(\theta)),\quad x<\alpha_{i}(\theta) \\
            -n(x-\beta_{i}(\theta)),\quad x>\beta_{i}(\theta).
        \end{cases}
    \end{equation*}
    Let $\{\tau_{k}\}_{k\in\mathbb{N}}$ be the sequence of random times where the jumps of $Y$ occurred. 
    Thanks to Proposition 5.2.18 in \cite{karatzas1991brownian} and Lemma \ref{Lemma 4.1}, by an induction argument we find 
    \begin{equation*}
        X^{(n,x_{1},\theta_{1})}_{t}\leq X^{(n,x_{2},\theta_{2})}_{t},\quad \mathbb{P}\text{-a.s. for any }t\in [\tau_{k},\tau_{k+1}), \quad \text{for any $k\in\mathbb{N}_{0}$}. 
    \end{equation*} 
    Then, using Theorem 1.4.1 in \cite{pilipenko} (with estimates as in Proposition 2.3 in \cite{yin2009hybrid}), we obtain, $\mathbb{P}\otimes dt\text{-a.s.}$,
    \begin{equation*}
        X^{x_{1},\xi^{*}(\theta_{1})}_{t}=
        \lim_{n\uparrow\infty}X^{(n,x_{1},\theta_{1})}_{t}\leq \lim_{n\uparrow\infty}X^{(n,x_{2},\theta_{2})}_{t}=X^{x_{2},\xi^{*}(\theta_{2})}_{t},
    \end{equation*}
    thus completing the proof.
\end{proof}

We are now in the position to provide the proof of Theorem \ref{Thm 3.2}
\begin{proof}[Proof of Theorem \ref{Thm 3.2}]
\label{Proof of Theorem 3.2}
    By Proposition \ref{eq:Prop B.1}, there exists a unique solution to $\mathbf{SP}(\alpha(\theta),\beta(\theta),x,i)$, denoted by $(X^{\xi^{*}(\theta)},\xi^{*}(\theta))$. This satisfies the properties collected in Definition \ref{eq:Def 3.1}. 
    We prove that $\xi^{*}(\theta)\in\mathcal{A}_{e}$ and verify that the optimal policy has the form as in (\ref{eq:(3.20)}). By Property (\ref{eq:Def 3.1-1}) in Definition \ref{eq:Def 3.1}, we have that
    \[
        X^{\xi^{*}(\theta)}_{t}\in [\alpha_{Y_{t}}(\theta),\beta_{Y_{t}}(\theta)],\; \forall t\geq 0\quad \mathbb{P}_{(x,i)}-a.s.\;,
    \]
    so that
    \begin{equation}
    \label{eq:(B.6)}
        \limsup_{T\uparrow\infty}\frac{1}{T}\mathbb{E}_{(x,i)}\big[ |X^{\xi^{*}(\theta)}_{T}| \big]=0.
    \end{equation}
    It remains to show that $\mathbb{E}\big[|\xi^{*}_{T}(\theta)|\big]<\infty$ for any $T<\infty$. To that end, let $T<\infty$ and $\psi:\mathcal{O}\to \mathbb{R}$ be any classical solution of
    \begin{equation}
    \label{BVP}
    \begin{cases}
        \mathcal{L}_{(X,Y)}\psi(x,i)=0,\; x\in (\alpha_{i}(\theta),\beta_{i}(\theta)),\; i\in\mathbb{Y} \\
        \psi_{x}(\alpha_{i}(\theta),i)=1,\quad \psi_{x}(\beta_{i}(\theta),i)=-1.
    \end{cases}
    \end{equation}
    Applying Itô-Meyer's formula to $\{\psi(X^{\xi^{*}(\theta)}_{t},Y_{t})\}_{t\geq 0}$ on $[0,T]$ we obtain
    \begin{align*}
        \mathbb{E}_{(x,i)}\big[ \psi(X^{\xi^{*}(\theta)}_{T},Y_{T}) \big]=& \psi(x,i)+\mathbb{E}_{(x,i)}\bigg[ \int_{0}^{T}\mathcal{L}_{(X,Y)}\psi(X^{\xi^{*}(\theta)}_{t},Y_{t})dt \bigg] \\ 
        & \quad +\mathbb{E}_{(x,i)}\bigg[ \int_{0}^{T}\psi_{x}(X^{\xi^{*}(\theta)}_{t},Y_{t})d\xi^{*}_{t}(\theta) \bigg], 
    \end{align*}
    and by using the fact that $\psi$ is a solution to (\ref{BVP}) we conclude that
    \begin{equation*}
       \mathbb{E}_{(x,i)}\big[ \psi(X^{\xi^{*}(\theta)}_{T},Y_{T}) \big] =\psi(x,i)+\mathbb{E}_{(x,i)}\big[ |\xi^{*}_{T}(\theta)|\big].
    \end{equation*}
    Since $X^{\xi^{*}(\theta)}\in [\alpha_{Y_{t}}(\theta),\beta_{Y_{t}}(\theta)],\text{ for any }t\geq 0\quad \mathbb{P}_{(x,i)}$-a.s., we obtain that 
    \begin{equation*}
        \mathbb{E}_{(x,i)}\big[ \psi(X^{\xi^{*}(\theta)}_{T},Y_{T}) \big]\leq \max_{i\in\mathbb{Y}}\max_{ x\in [\alpha_{i}(\theta),\beta_{i}(\theta)]}|\psi(x,i)|<\infty,
    \end{equation*}
    and therefore
    \begin{equation}
        \label{eq:(B.7)}
        \mathbb{E}_{(x,i)}\big[ |\xi^{*}_{T}(\theta)|\big]\leq \mathbb{E}_{(x,i)}\big[ \psi(X^{\xi^{*}(\theta)}_{T},Y_{T}) \big]+\psi(x,i)<\infty,\text{ for any }T<\infty.
    \end{equation}
    Combining (\ref{eq:(B.6)}) and (\ref{eq:(B.7)}) we conclude that $\xi^{*}\in\mathcal{A}_{e}$. Finally, defining 
    \[
        \widehat{\xi}^{+}_{t}(\theta):=\sup_{0\leq s\leq t}\bigg( \alpha_{Y_{s}}(\theta)-I(X^{\widehat{\xi}(\theta)})_{s}+\widehat{\xi}^{-}_{s}(\theta) \bigg)^{+}\text{  and  } \widehat{\xi}^{\; -}_{t}(\theta):=\sup_{0\leq s\leq t}\bigg( I(X^{\widehat{\xi}(\theta)})_{s}+\widehat{\xi}^{+}_{s}(\theta)-\beta_{Y_{s}}(\theta) \bigg)^{+}
    \] 
    it is easy to see that $\widehat{\xi}(\theta):=\widehat{\xi}^{+}(\theta)-\widehat{\xi}^{-}(\theta)$ satisfies the properties in Definition \ref{eq:Def 3.1}. Since, however, the solution to $\mathbf{SP}(\alpha(\theta),\beta(\theta),x,i)$ is unique, it follows that $\xi^{*,+}\equiv\widehat{\xi}^{+}$ and $\xi^{*,-}\equiv\widehat{\xi}^{-}$.
\end{proof}

\section{Proof of (\ref{(1) in Thm 4.1}) in Theorem \ref{Thm 4.1}: The Fokker-Planck Coupled System}
\label{Appendix C}
We follow closely the proof of Theorem 1 in \cite{DAURIA20121566}.
Let $\mu^\theta$ be the cumulative distribution function of the stationary distribution $\nu^\theta$ (cf.\ Proposition \ref{Prop 4.1}).
To simplify the notation, we drop the dependence on $\theta$; that is, we set $\mu := \mu^\theta$ as well as $\alpha_i := \alpha_i (\theta)$, $\beta_i := \beta_i (\theta)$ for any $i \in \mathbb Y$.
We divide the rest of the proof in three steps.

\textbf{Step 1.}
We first derive a variational equation for the function $\mu$. 
To this end, introduce the generator $\mathcal L _{(X,Y)}^{\alpha,\beta}$ of the process $(X^\xi,Y)$, where $X^\xi$ is reflected at the boundaries $(\alpha,\beta) :=\{ (\alpha_i,\beta_i) \}_{i \in \mathbb Y}$.
Namely, for any $\phi:=(\phi(\cdot,i))_{i\in\mathbb{Y}}\in C^{2}(\mathcal{I};\mathbb{R}^{d})$ define
\begin{equation*}
    \mathcal{L}_{(X,Y)}^{\alpha,\beta} \phi (x,i) :=\frac{1}{2}\sigma^{2}(x,i)\phi_{xx}(x,i)+b(x,i)\phi_{x}(x,i)+\sum_{j \in \mathbb Y} q_{ij}\widehat \phi (x,j), \quad (x,i) \in \mathcal O,
\end{equation*}
where $\widehat \phi (x,i) := \phi (\alpha_i^- \lor x \land \beta_i,i)$.
From Chapter IV in \cite{ethier2009markov}, we have that $\mu$ solves the equation
\begin{equation}
\label{eq:(C.2)}
      \sum_{i\in\mathbb{Y}}\int_{\alpha_{i}^{-}}^{\beta_{i}}\mathcal{L}_{(X,Y)}^{\alpha,\beta} \phi(x,i) d\mu(x,i)=0,\quad\text{for any }\phi:=(\phi(\cdot,i))_{i\in\mathbb{Y}}\in C^{2}(\mathcal{I};\mathbb{R}^{d}).
\end{equation}
Hence, by Theorem 5.3.4 in \cite{Arapostathis_book} we obtain that $\mu$ is absolutely continuous with respect to the Lebesgue measure with density function $ \mu_x$.

Next, split the interval $[\min_{i\in\mathbb{Y}}\alpha_{i},\max_{i\in\mathbb{Y}}\beta_{i}]$ in the disjoint sub-intervals 
$$
I_{k}=(l_{k-1},l_{k}),\quad k=1,..,K,
$$
with $l_{0}=\min_{i\in\mathbb{Y}}\{\alpha_{i}\},\;l_{K}=\max_{i\in\mathbb{Y}}\{\beta_{i}\}$ and
\begin{equation*}
   l_{k+1}=\min_{\substack{i\in\mathbb{Y}: \\ \alpha_{i}>l_{k}}}\{\alpha_{i}\}\wedge \min_{\substack{i\in\mathbb{Y}: \\ \beta_{i}>l_{k}}}\{\beta_{i}\}.
\end{equation*}
Introduce the set of test functions 
     \begin{equation}
           \mathcal{V}:=\big\{\phi\in C^{2}(\mathcal{I};\mathbb{R}^{d}):\phi_{x}(x,i)=\phi_{xx}(x,i)=0,\ x\in\mathcal{I}\setminus (\alpha_{i}(\theta),\beta_{i}(\theta)),\;\text{ for any }i\in\mathbb{Y}\big\}.
     \end{equation}
Since $\mu(\alpha_{i}^{-},i)=0$, for $\phi \in \mathcal V$ we observe that
\begin{align*}
     \int_{\alpha_{i}^{-}}^{\beta_{i}}\widehat{\phi}(x,j)d\mu(x,i) 
         =\mu(\beta_{i},i) \widehat \phi (\beta_{j},j)-\int_{\alpha_{j}}^{\beta_{j}}\mu(\alpha_{i}^{-}\vee x\wedge \beta_{i},i)\phi_{x}(x,j)dx,
\end{align*}
so that, exchanging the indexes in th sum, we find
\begin{align}\label{eq int by parts 3}
\sum _{i \in \mathbb Y} \sum_{j\in\mathbb{Y}}q_{ij} \int_{\alpha_{i}^{-}}^{\beta_{i}}\widehat{\phi}(x,j)d\mu (x,i)
=& \sum _{i \in \mathbb Y} \sum_{j\in\mathbb{Y}}q_{ij} \mu(\beta_{i},i) \widehat \phi (\beta_{j},j) \\ \notag
&- \sum _{i \in \mathbb Y} \sum_{j\in\mathbb{Y}}q_{ji} \int_{\alpha_{i}}^{\beta_{i}}\mu(\alpha_{j}^{-}\vee x\wedge \beta_{j},j)\phi_{x}(x,i)dx. \notag
\end{align}
By substituting  \eqref{eq int by parts 3} in \eqref{eq:(C.2)} and rearranging the terms, for any $\phi\in\mathcal{V}$, we obtain 
\begin{equation}\label{(C.4)}
   \sum_{i\in\mathbb{Y}}  \big( I_w^i(\mu,\phi) +  \Sigma^i(\mu  ,\phi)  \big) =0,
\end{equation}
where 
\begin{align*}
     I_w^i(\mu,\phi) & :=  \int_{\alpha_{i}}^{\beta_{i}} \frac{1}{2}\sigma^{2}(x,i)\mu_{x}(x,i)\phi_{xx}(x,i)dx \\ 
     & \quad + \int_{\alpha_{i}}^{\beta_{i}} \Big( b(x,i) \mu_{x}(x,i)  -\sum_{j\in\mathbb{Y}}q_{ji}\mu(\alpha_{j}^{-}\vee x\wedge \beta_{j},j)\Big) \phi_{x}(x,i)dx, \\
     \Sigma^i(\mu,\phi) & := \sum_{j\in\mathbb{Y}}q_{ji}\mu(\beta_{j},j)\widehat{\phi}(\beta_{i},i).
\end{align*}  
   
\textbf{Step 2.}
The aim of this step is to gradually use the variational equation \eqref{(C.4)} to obtain the system \eqref{eq:(4.1)} and the proper regularity of $\mu$.
We argue as follows:
\begin{enumerate}
    \item First, for any $i \in \mathbb Y$ and $k$ such that $I_k \subset (\alpha_i, \beta_i)$, choose $\phi (\cdot, j) =0$ for $j \ne i$ and $\phi (\cdot, i)$ such that $\supp \phi (\cdot, i ) \subset I_k$. For such a choice of $\phi$, we have $\Sigma^i(\mu,\phi)=0$, so that from \eqref{(C.4)} we find $I_w^i(\mu,\phi)=0$.
Thus, setting $\psi := \phi_x (\cdot,i)$, we deduce that the function $\mu(\cdot, i)$ is a solution to the variational equation
\begin{align*}
    \int_{I_k} \bigg( & \frac{1}{2}\sigma^{2}(x,i)\mu_{x}(x,i)\psi_{x}(x,i)dx 
    \\ &+ \Big( b(x,i)\mu_{x}(x,i)  - \sum_{j\in\mathbb{Y}}q_{ji}\mu(\alpha_{j}^{-}\vee x\wedge \beta_{j},j) \Big) \psi (x,i) \bigg) dx=0, 
\end{align*}
for any $\psi \in C^2_c (I_k)$. 
Therefore, by the interior regularity for elliptic equations (see Theorem 8.10 in \cite{gilbarg2001elliptic}), we obtain that $\mu(\cdot, i) \in C^2 (I_k;\mathbb R)$. 
Since the interval $I_k$ is arbitrary, we obtain  
\begin{equation}
\label{eq C 2 regularity}
\mu (\cdot,i)\in C^{2}\Big( (\alpha_{i},\beta_{i})\setminus \bigcup_{j\in\mathbb{Y}}\{\alpha_{j},\beta_{j}\} \Big), 
\end{equation}
for any $i \in \mathbb Y$.
\item
Next, for $i\in \mathbb{Y}$ we use the enumeration $l_{k}=\alpha_{i}<l_{k+1}<...<l_{m-1}<l_{m}=\beta_{i}$  and define $\Delta f(l_{j},i)=f(l_{j}^{+},i)-f(l_{j}^{-},i)$ for $k<j<m$, $\Delta f(l_{k},i)=-f(l_{k}^{-},i)$ and $\Delta f(l_{m},i)=f(l_{m}^{+},i)$.
Since $\mu(\cdot,i) \in C^2 (I_\ell)$ for $\ell = k+1,...,m$, 
dividing the domain of integration into the $I_\ell$'s  and
applying integration by parts, for $\phi \in \mathcal V$ we obtain
\begin{align}\label{eq int by parts 1}
    I_w^i(\mu,\phi) =  I_s^i(\mu,\phi) + \Psi_{\sigma}^i(\mu,\phi),
\end{align}
where 
\begin{align*}
I_s^i(\mu,\phi) & :=  - \int_{\alpha_{i}}^{\beta_{i}}\bigg( \frac{1}{2}\sigma^{2}(x,i)\mu_{xx}(x,i) - (b(x,i)-\sigma\sigma_{x}(x,i))\mu_{x}(x,i) \\
     & \quad \quad \quad \quad \quad \quad  + \sum_{j\in\mathbb{Y}}q_{ji}\mu(\alpha_{j}^{-}\vee x\wedge \beta_{j},j)\bigg)\phi_{x}(x,i)dx, \\
\Psi_{\sigma}^i(\mu,\phi) &:= - \sum_{j=k}^{m}\frac{1}{2}\sigma^{2}(l_{j},i)\Delta \mu_{x}(l_{j},i)\phi_{x}(l_{j},i).
\end{align*}
In particular, for any $i \in \mathbb Y$ and $\ell$ such that $I_\ell \subset (\alpha_i, \beta_i)$, by choosing $\phi (\cdot, j) =0$ for $j \ne i$ and $\phi (\cdot, i)$ such that $\supp \phi (\cdot, i ) \subset I_\ell$, $\mu$ solves the equation $I_s^i(\mu, \phi) =0$.
Hence, since the $i, I_\ell$ are arbitrary, we conclude that
$$
 \frac{1}{2}\sigma^{2}(x,i)\mu_{xx}(x,i)-(b(x,i)-\sigma\sigma_{x}(x,i))\mu_{x}(x,i)  +\sum_{j\in\mathbb{Y}}q_{ji}\mu(\alpha_{j}^{-}\vee x\wedge \beta_{j},j)=0, 
$$
in $(\alpha_i,\beta_i)$, for any $i\in \mathbb Y$; that is, $\mu$ solves the equation \eqref{eq:(4.1)} pointwise. 
\item Furthermore, using the latter equation, we deduce that $I_s^i(\mu,\phi)=0$ for any $\phi \in \mathcal V$.
Therefore, by choosing $\phi \in \mathcal V$ such that $\phi_x (l_j,i) = \phi_{xx} (l_j,i)=0$ for any $j,i \in \mathbb Y$, from \eqref{(C.4)} and \eqref{eq int by parts 1} we obtain $\Sigma^i(\mu,\phi)=0$.
Since the values of $\phi(\beta_i,i)$ are arbitrary, this in turn implies that
\begin{equation*}
    \sum_{j\in\mathbb{Y}}q_{ji}\mu (\beta_{j},j)=0,\quad i\in\mathbb{Y}.
\end{equation*}
Such a system of equations corresponds to the eigenvector problem with zero eigenvalue of the matrix $\mathbb Q$ and its solution is given by the stationary distribution of $Y$.
Hence, we have $\mu (\beta_{i} ,i)=p(i)$ for any $i\in\mathbb{Y}$, proving that the boundary conditions of \eqref{eq:(4.1)} are met.
\item Finally, using $I_s^i(\mu,\phi)=\Sigma^i(\mu,\phi)=0$ in \eqref{(C.4)},  we have 
$$ 
\sum_{i \in \mathbb Y} \sum_{\substack{j=k: \\l_{k}=\alpha_{i},\\l_{m}=\beta_{i}}}^{m}\frac{1}{2}\sigma^{2}(l_{j},i) \Delta \mu_{x}(l_{j},i)\phi_{x}(l_{j},i)  = 0, \quad \text{for any $\phi \in \mathcal V$,}
$$ 
from which we deduce (thanks also to \eqref{eq C 2 regularity}) that $\mu \in C^1 ( \mathcal I; \mathbb R ^d)$. 
\end{enumerate}
Concluding, $\mu (\cdot,i)\in C^{1}([\alpha_{i},\beta_{i}])\cap C^{2}((\alpha_{i},\beta_{i})\setminus \bigcup_{j\in\mathbb{Y}}\{\alpha_{j},\beta_{j}\})$ is a classical solution to the boundary value problem in \eqref{eq:(4.1)}.

\textbf{Step 3.} 
Now we prove that \eqref{eq:(4.1)} admits a unique solution. 
Assume that \eqref{eq:(4.1)} has two solutions, denoted by $\mu_{1}=(\mu_{1}(\cdot,i))_{i\in\mathbb{Y}}$ and $\mu_{2}=(\mu_{2}(\cdot,i))_{i\in\mathbb{Y}}$, then the difference $\tilde{\mu}:=\mu_{2}-\mu_{1}=(\mu_{2}(\cdot,i)-\mu_{1}(\cdot,i))_{i\in\mathbb{Y}}$ solves \eqref{eq:(4.1)} with $\tilde{\mu}(x,i)=0$ for $x\in \{\alpha_{i},\beta_{i}\}$.
Therefore, by Theorem 1 in \cite{maxprinc}, we obtain that
\begin{equation}
    0\leq \sup_{(\alpha_{i},\beta_{i})}\big|\tilde{\mu}(x,i)\big|\leq C\max_{\{\alpha_{i},\beta_{i}\}}\big|\tilde{\mu}(x,i)\big|=0,
\end{equation}
which in turn implies that, for any fixed $i\in\mathbb{Y}$, $\mu_{2}(x,i)=\mu_{1}(x,i),\; x\in [\alpha_{i},\beta_{i}]$. 
Hence, by Proposition \ref{Prop 4.1} the unique stationary distribution $\nu$ admits a cumulative function $\mu$ which coincides with the unique solution to \eqref{eq:(4.1)}.

\subsection*{Acknowledgements}
Funded by the Deutsche Forschungsgemeinschaft (DFG, German Research Foundation) – Project-ID 317210226 – SFB 1283. We thank the anonymous Associate Editor and Referees for constructive comments.

\scriptsize \bibliographystyle{apalike}

\bibliography{references}

\end{document}